\numberwithin{equation}{section}
\newtheorem{Lem}{Lemma}
\newtheorem*{thm}{Theorem}
\newtheorem{Thm}{Theorem}
\newtheorem{definition}{Definition}
\newtheorem{remark}{Remark}
\newtheorem{corollary}{Corollary}
\newcommand{\cl}[1]{\ensuremath{\overline{#1}}}
\newcommand{\set}[1]{\ensuremath{\{#1\}}}
\newcommand{\E}[1]{\ensuremath{E\left[ #1 \right]}}
\newcommand{\curly}[1]{\ensuremath{\mathcal #1}}
\newcommand{\leb}{\ensuremath{\lambda}}
\newcommand{\ep}{\ensuremath{\epsilon}}
\newcommand{\C}{\ensuremath{\mathbb C}}
\newcommand{\w}{\ensuremath{\omega}}
\newcommand{\Di}[2]{\frac{\partial #2}{\partial #1}}
\newcommand{\DDi}[2]{\frac{\partial^2 #2}{\partial #1^2}}
\newcommand{\twiddle}[1]{\ensuremath{\widetilde{#1}}}
\newcommand{\case}[2]{\ensuremath{#1, \text{~if~} #2}}
\newcommand{\setst}[2]{\ensuremath{\left\{#1\,\middle|\,#2\right\}}}
\newcommand{\intersection}{\ensuremath{\cap}}
\newcommand{\abs}[1]{\left\lvert #1 \right\rvert}
\newcommand{\norm}[1]{\left\lVert#1\right\rVert}
\newcommand{\inprod}[2]{\ensuremath{\left\langle#1,#2\right\rangle}}
\newcommand{\gives}{\ensuremath{\rightarrow}}
\newcommand{\x}{\ensuremath{\times}}
\renewcommand{\Re}{\ensuremath{\mathrm{Re} \ }}
\DeclareMathOperator{\Cov}{Cov}
\DeclareMathOperator{\Sym}{Sym}
\title[Correlations and Nearest Neighbor Spacings]{Correlations and Pairing between Zeros and Critical Points of Gaussian Random Polynomials}            
\begin{document}
\author{Boris Hanin} 
\address{Department of Mathematics, Northwestern University, 2033 Sheridan Rd. Evanston, IL, 60208}
\email{bhanin@math.northwestern.edu}

\maketitle                    
\begin{abstract}
We study the asymptotics of correlations and nearest neighbor spacings between zeros and holomorphic critical points of $p_N,$ a degree $N$ Hermitian Gaussian random polynomial in the sense of Shiffman and Zeldtich, as $N$ goes to infinity. By holomorphic critical point we mean a solution to the equation $\frac{d}{dz}p_N(z)=0.$ Our principal result is an explicit asymptotic formula for the local scaling limit of $\E{Z_{p_N}\wedge C_{p_N}},$ the expected joint intensity of zeros and critical points, around any point on the Riemann sphere. Here $Z_{p_N}$ and $C_{p_N}$ are the currents of integration (i.e. counting measures) over the zeros and critical points of $p_N,$ respectively. We prove that correlations between zeros and critical points are short range, decaying like $e^{-N\abs{z-w}^2}.$ With $\abs{z-w}$ on the order of $N^{-1/2},$ however, $\E{Z_{p_N}\wedge C_{p_N}}(z,w)$ is sharply peaked near $z=w,$ causing zeros and critical points to appear in rigid pairs. We compute tight bounds on the expected distance and angular dependence between a critical point and its paired zero. 
\end{abstract}

\setcounter{section}{0}
\section{Introduction}
\noindent Let $p_N$ be a degree $N$ polynomial in one complex variable. We study in this paper how its zeros and holomorphic critical points (those $z$ for which $\frac{d}{dz}p_N(z)=0$) are correlated when $p_N$ is random and $N$ is large. To motivate the study of correlations between zeros and holomorphic critical points, we recall the following classical theorem from complex analysis. 
\begin{thm}[Gauss-Lucas]
  The holomorphic critical points of any polynomial in one complex variable are contained in the convex hull of its zeros.
\end{thm}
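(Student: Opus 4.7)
The plan is to reduce the statement to the observation that the vanishing of the logarithmic derivative of $p_N$ at a critical point forces that critical point to be an explicit convex combination of the zeros. So the first step is to factor $p_N(z)=c\prod_{j=1}^N(z-z_j)$, where $z_1,\ldots,z_N$ are the zeros listed with multiplicity, and to write down the partial fraction identity
$$
\frac{p_N'(z)}{p_N(z)}=\sum_{j=1}^N\frac{1}{z-z_j},
$$
which is valid away from the zero set of $p_N$.

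Next I would fix a critical point $w$ and split into two cases. If $w$ is itself a zero of $p_N$, then $w$ is trivially in the convex hull of the zero set and there is nothing to prove. Otherwise $p_N(w)\neq 0$ and $p_N'(w)=0$, so evaluating the displayed identity at $w$ gives $\sum_j(w-z_j)^{-1}=0$.

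The key trick is then to rationalize each summand by writing $(w-z_j)^{-1}=\overline{(w-z_j)}/\abs{w-z_j}^2$ and to conjugate the resulting vanishing sum, which produces
$$
\sum_{j=1}^N \lambda_j\,(w-z_j)=0,\qquad \lambda_j:=\frac{1}{\abs{w-z_j}^2}>0.
$$
Solving for $w$ yields $w=\sum_j(\lambda_j/\Lambda)\,z_j$ with $\Lambda:=\sum_k\lambda_k$, which exhibits $w$ as a convex combination of the zeros and places it in their convex hull.

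This is essentially a one-line classical argument and I do not anticipate a genuine obstacle; the only subtlety is the bookkeeping for the case when the critical point coincides with a zero (and in particular a multiple zero), which the above dichotomy handles cleanly.
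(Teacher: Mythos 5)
Your argument is correct and is the standard classical proof of Gauss--Lucas: factor $p_N$, take the logarithmic derivative to get $p_N'/p_N=\sum_j(z-z_j)^{-1}$, observe that a critical point $w$ that is not itself a zero satisfies $\sum_j(w-z_j)^{-1}=0$, rationalize and conjugate to obtain $\sum_j\lambda_j(w-z_j)=0$ with $\lambda_j=\abs{w-z_j}^{-2}>0$, and solve for $w$ as a convex combination of the $z_j$. The handling of the degenerate case (a critical point that is also a zero, possibly multiple) is also correct: such a point lies trivially in the convex hull.

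For comparison, note that the paper does not actually prove this statement; it is quoted only as a motivating classical fact from complex analysis, with no argument supplied. So there is no paper proof to measure yours against, but the one you have given is the textbook argument and is exactly what is intended here.
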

Non-trivial correlations between zeros and critical points of random polynomials must therefore always exist. We prove in this paper that, at least for Hermitian Gaussian random polynomials in the sense of Bleher, Shiffman, and Zelditch in \cite{PLL, Universality, NV} (cf Section \ref{S:Def} for a definition), a zero of $p_N$ at $z$ and a holomorphic critical point of $p_N$ at $w$ are essentially uncorrelated unless $\abs{z-w}$ is on the order of $N^{-1/2}.$ This follows from Theorems \ref{T:Crit Two Point Function} and \ref{T:Crit Given Zero One Point Function}. On the $N^{-1/2}$ length-scaled, however, we find that, on average, zeros and critical points appear in rigid pairs (cf Figures 1-3). This statement is quantified in Theorems \ref{T:Crit Two Point Function} and \ref{T:Sendov}. 

We assume from now on that $p_N$ is a degree $N$ Hermitian Gaussian random polynomial. We associate to $p_N$ the currents of integration (equivalently counting measures)
\[Z_{p_N}:=\sum_{p_N=0}\delta_z\quad \text{ and }\quad C_{p_N}:=\sum_{\frac{d}{dw}p_N(w)=0}\delta_w\]
over its zeros and holomorphic critical points and study the expected joint intensity 
\[K_N:=\E{Z_{p_N}\wedge C_{p_N}}.\]
We will refer to $K_N$ as the cross-correlation current. As in \cite{PLL, Universality, NV, Conditional} and elsewhere, our methods combine the Poincar\'e-Lelong formula with Sz\"ego kernel asymptotics in the setting of positive holomorphic line bundles over compact complex manifolds. 

This paper is the first to consider correlations and nearest neighbor spacings between zeros and holomorphic critical points of $p_N.$ Critical points with respect to smooth metric connections were considered in \cite{VacuaI, VacuaII, VacuaIII, Transportation} and, as explained in Section \ref{S:Smooth VS Holomorphic}, result in a significantly different theory. Perhaps the most striking difference is that zeros and holomorphic critical points are highly correlated and tend to appear in rigid pairs. This is illustrated in Figures 1-4 for $p_{50},$ a random degree $50$ polynomial drawn from the computationally tractable $SU(2)$ ensemble described in Section \ref{S:SU2}. The colored lines in these figures are the (negative) gradient flow lines of $M(z):=\abs{p_{50}(z)}^2.$ Zeros and holomorphic critical points of $p_{50}$ are the local minima and saddle points of $M,$ respectively. Flow lines terminating in a given zero or critical point are drawn in the same color. 
\begin{figure}[h]
\centering
\includegraphics[scale=.75]{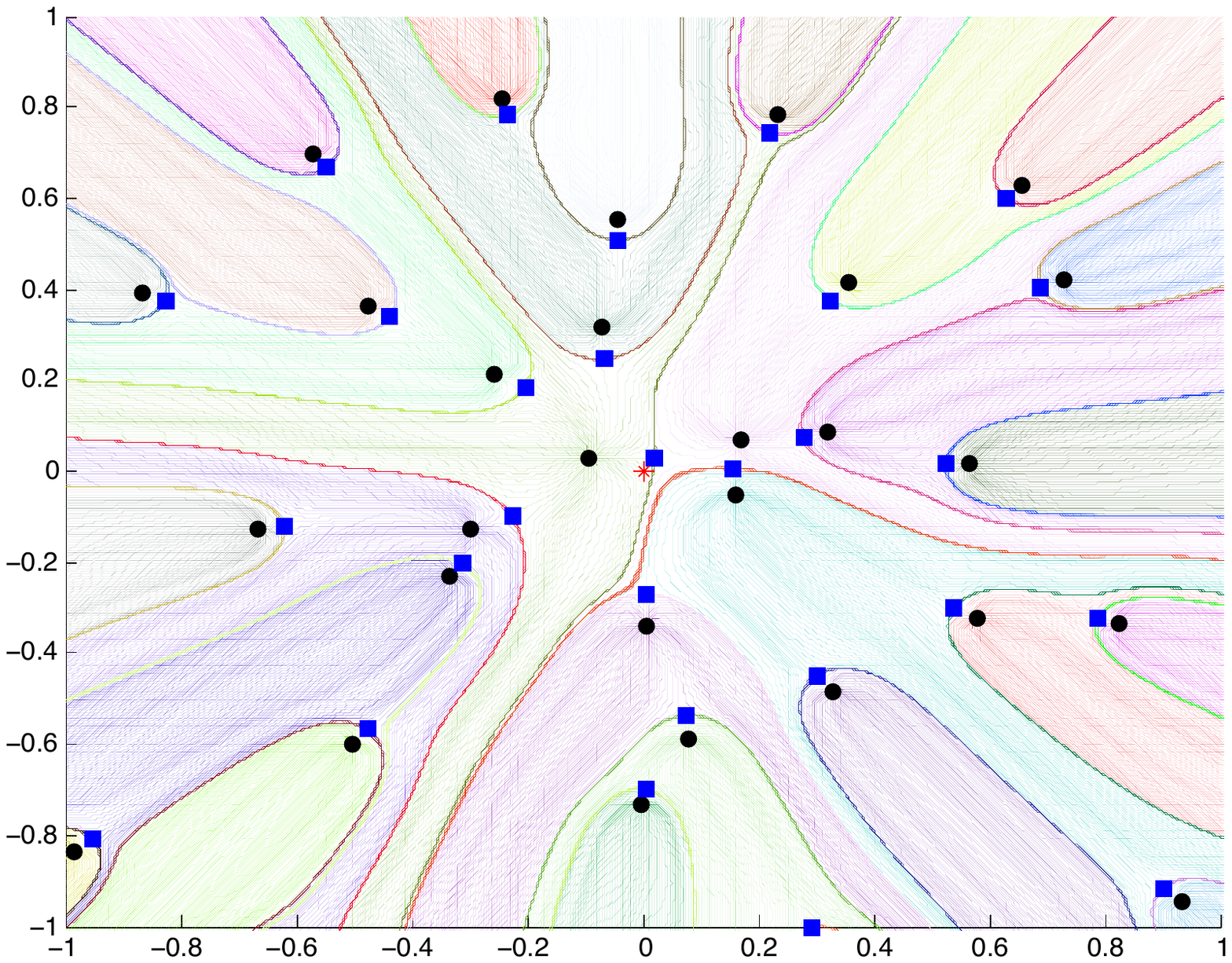}
\label{F:Unscaled SU2 50}
\caption{Zeros (black discs) and holomorphic critical points (blue squares) for an $SU(2)$ polynomial $p_{50}$ of degree $50.$ The origin is denoted by a red asterisk.}
\end{figure}

$M(z)$ is subharmonic and so cannot have any local maxima. The basin of attraction for a given zero (i.e. those points in $\C$ whose gradient flow lines terminate in that zero) is therefore unbounded. By comparison, the basins of attraction considered by Nazarov, Sodin, and Volberg in \cite{Transportation} are compact and have constant area with probability $1.$ The difference is that while they study the zeros of a Gaussian Analytic Function $f,$ the saddle points of their potential are critical points of the random smooth function $f(z) e^{-\frac{1}{2}\abs{z}^2}$ rather than $f$ itself. Their critical points are therefore computed with respect to the metric connection of the hermitian metric $h(z)=\frac{1}{\pi}e^{-\abs{z}^2}$ on the trivial line bundle $\C\x \C\twoheadrightarrow \C$. This paper investigates the purely holomorphic setting where, as explained in Section \ref{S:Smooth VS Holomorphic}, we do not include the metric factor $e^{-\frac{1}{2}\abs{z}^2}$ in contructing the potential. 
 
\subsection{Definitions and Notation}\label{S:Def}
Let $h$ be a smooth positive Hermitian metric on $\mathcal O(1)\twoheadrightarrow \C P^1.$ We recall the definition of the Hermitian Gaussian ensemble associated to $h.$ Fix $N\geq 1$ and write $\curly P_N$ for the space of polynomials of degree at most $N$ in one complex variable. We identify $\curly P_N$ with $H_{hol}^0(\C P^1, \mathcal O(N)),$ the space of global sections of $\mathcal O(N)\twoheadrightarrow \C P^1,$ by the linear map $z^j\mapsto z_1^j z_0^{N-j}$ (cf Section \ref{S:CP1}). A random polynomial (section) of degree $N$ drawn from this ensemble is
\[p_N:=\sum_{j=0}^N a_j S_j,\]
where $a_j\sim N(0,1)_{\C}$ are i.i.d. standard complex Gaussians and $\{S_j\}_{j=0}^N$ is any orthonormal basis for $H_{hol}^0(\C P^1,\mathcal O(N))$ with respect to the inner product
 \begin{equation}
  \label{E:Inner Product}
\inprod{s_1}{s_2}_h:=\int_{\C P^1}h^N(s_1(z),s_2(z))\, \w_h(z),\quad s_1,s_2\in H_{hol}^0(\C P^1,\mathcal O(N)).
\end{equation}
Here $\w_h:=\frac{i}{2\pi}\partial \cl{\partial} \log h^{-2}$ is the first Chern class of $(\mathcal O(1),h).$ We say that $p_N$ is a random polynomial of degree $N$ drawn from the Hermitian Gaussian ensemble corresponding to $h.$ 

We denote throughout by $z_0$ the usual frame of $\mathcal O(1)$ over $\C P^1\backslash \set{\infty}$ (cf Section \ref{S:CP1}) and define $\nabla^{z_0}$ to be the meromorphic connection on $\mathcal O(1)$ for which $z_0$ is parallel (cf Section \ref{S:Connection}). Writing $p_N$ for a degree $N$ polynomial and the section it represents, the critical point equation $\frac{d}{dz}p_N(z)=0$ becomes ${\nabla^{z_0}}^{\otimes N}p_N=0.$ Relative to the frame $z_0,$ $\nabla^{z_0}$ has an indentically zero connection $1-$form. Hence, it is related to the metric connection $\nabla^h$ of $h$ via
\begin{equation}\label{E:Two Connections}
\nabla^{z_0}=\nabla^h-\partial \phi_{z_0},  
\end{equation}
where
\begin{equation}
  \label{E:Special KPot}
\phi_{z_0}(z):=\log \norm{z_0(z)}_h^{-2}.  
\end{equation}
The function $\phi_{z_0}$ will play an important role in our results. 
\begin{figure}[h]
\centering
\includegraphics[scale=.75]{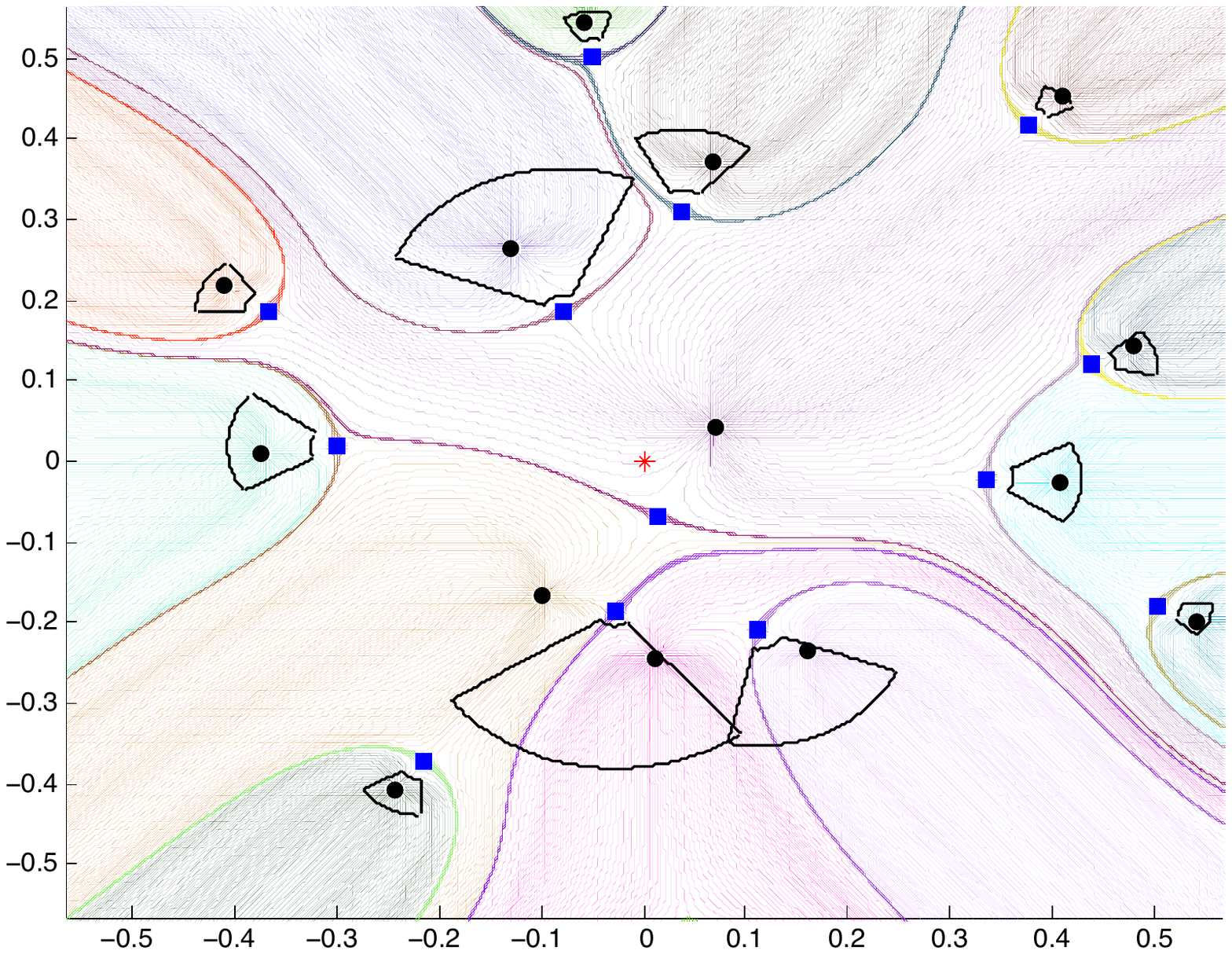}
\label{F:Scaled SU2 Balls Crit}
\caption{Zeros (black discs) and holomorphic critical points (blue squares) for an $SU(2)$ polynomial $p_{50}$ of degree $50$ inside $[-4/\sqrt{50},4/\sqrt{50}]^2$ in K\"ahler normal coordinates around $\xi=[1:0],$ the unique point for which $d\phi_{z_0}(\xi)=0.$ Near each critical point at $w$ with $\abs{w}>1,$ the sector predicted in Theorem \ref{T:Sendov} (with parameter $c=3/4$) to contain its paired zero is shown.}
\end{figure}

\begin{figure}[h]
\centering
\includegraphics[scale=.75]{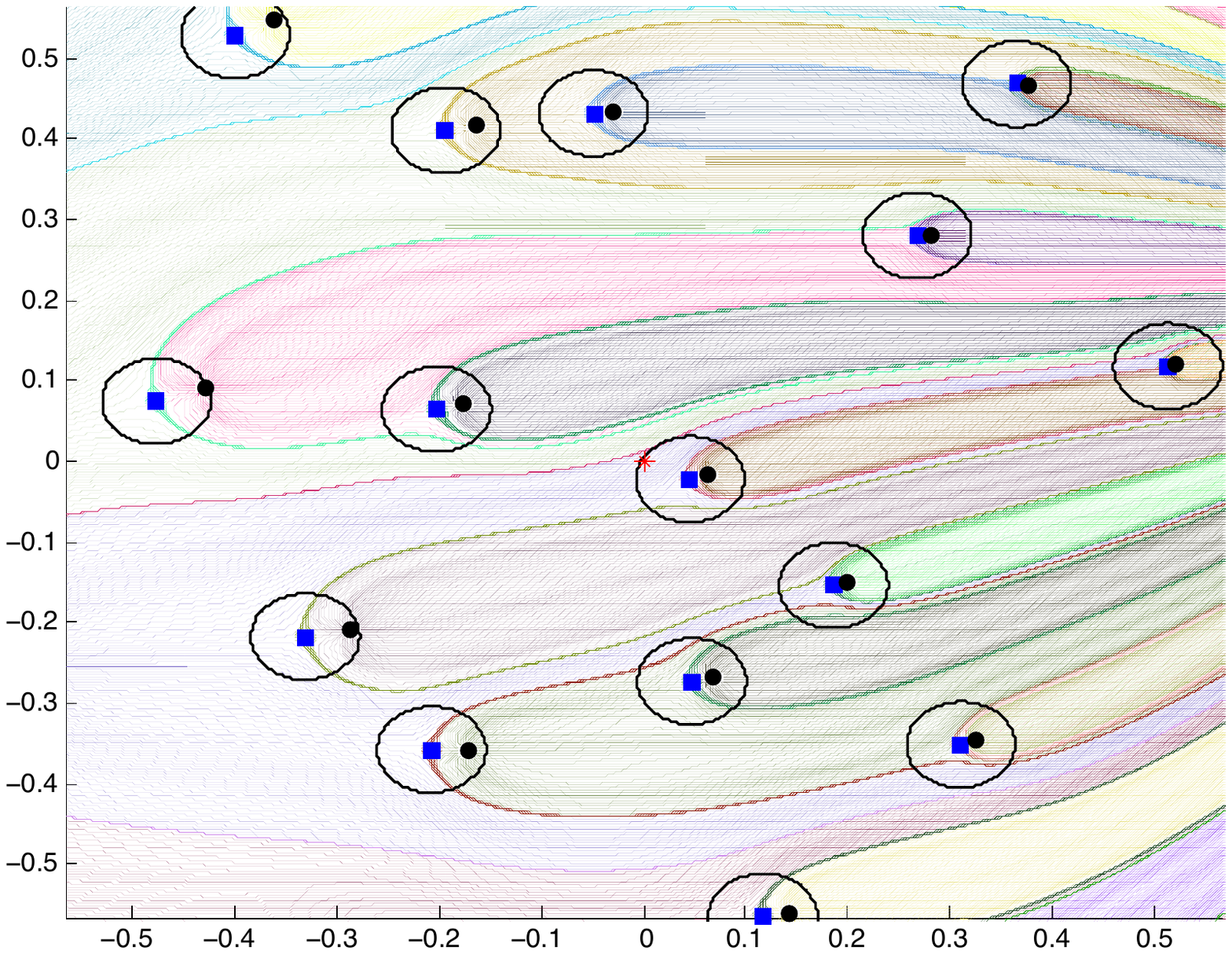}
\label{F:Scaled SU2 Balls NonCrit}
\caption{Zeros (black discs) and holomorphic critical points (blue squares) for an $SU(2)$ polynomial $p_{50}$ of degree $50$ inside [$-4/\sqrt{50},4/\sqrt{50}]^2$ in K\"ahler normal coordinates around a generic point $\xi=2+.3 i,$ which is shown as a red asterisk. Around each critical point, the circle of radius $N^{-3/4},$ predicted by Theorem \ref{T:Sendov} to contain its paired zero, is shown.}
\end{figure}

\subsection{Informal Discussion of Results}\label{S:Informal} 
Our main result is Theorem \ref{T:Crit Two Point Function}. Together with Theorem \ref{T:EZ}, it gives an asymptotic formula for $K_N$ in local coordinates near any $\xi$ on the Riemann sphere. More precisely, we compactify $\C$ into the Riemann sphere $\C P^1\cong S^2$ and fix $\xi\in \C P^1.$ To resolve individual zeros and critical points appearing near $\xi$ and study their correlations, we work in a particular holomorphic coordinate, called a K\"ahler normal coordinate, centered at $\xi$ and dilate by a factor of $N^{1/2}$ relative to $\xi$ (cf Definition \ref{D:Scaled KNC}). Our choice of coordinate is adapted to $h$ and gives a universal yardstick for measuring local correlations (cf Section \ref{S:Scaling}). The $N^{1/2}$ scaling compensates for the typical  $N^{-1/2}$ distance between $N$ well-spaced points on $\C P^1.$ 

Theorem \ref{T:Crit Two Point Function} shows that correlations between zeros and critical points in scaled coordinates near $\xi$ depend strongly on whether $d\phi_{z_0}(\xi)=0$ ($\phi_{z_0}$ is defined in (\ref{E:Special KPot})). Namely, when $d\phi_{z_0}(\xi)=0,$ zeros and critical points are highly correlated but stay a bounded distance apart and tend to appear in rigid pairs (cf Figure 2). When $d\phi_{z_0}(\xi)\neq 0,$ however, zero and critical point pairs are separated by a distance of at most $N^{-1/4}$ and hence coincide in the large $N$ limit (see Figure 3). This happens because the leading term in powers of $N$ for the connection ${\nabla^{z_0}}^{\otimes N}$ in scaled coordinates near such $\xi$ is an order $0$ differential operator. Zeros and critical points thus become indistinguishable. This can be seen directly from (\ref{E:Two Connections}) and is explained in Section \ref{S:Connection}.

In Theorem \ref{T:Sendov}, our main application of Theorem \ref{T:Crit Two Point Function}, we study nearest neighbor spacings between zeros and critical points in scaled local coordinates near a fixed $\xi \in \C P^1.$ We fix a measurable set $A$ in these coordinates and show that the expected number of critical points lying in $A$ is equal to the expected number of zero and critical point pairs $(z,w)$ with $w\in A$ and $z$ ``paired'' with $w$ is a nearly deterministic way. It is tempting to interpret this result by saying that, on average, each critical point comes paired with a unique zero. Although this interpreation is plausible from Figures 1-3, Theorem \ref{T:Sendov} is consistent with the possibilities such as, on average, half the critical points of $p_N$ being paired with two zeros and half are not being paired with any zeros. Developing the tools to exclude such possibilities is work in progress by the author. 

In addition to the cross correlation current $K_N,$ we fix $\xi \in \C P^1$ and treat in Theorem \ref{T:Crit Given Zero One Point Function} the conditional current
\[\E{C_{p_N}\,|\, p_N(\xi)=0}\]
in the sense of Shiffman, Zelditch, and Zhong in \cite{Conditional}. As explained in Section 6.1 of \cite{Conditional}, $\E{C_{p_N}\,|\, p_N(\xi)=0}$ gives a measure of the correlations between zeros and critical points that is quite different from the conditional density obtained from $K_N.$ Finally, in Theorem \ref{T:EZ}, we give local and global asymptotics for the (unconditional) expected distribution of critical points $\E{C_{p_N}}.$ 

\subsection{Formal Statement of Results}\label{S:Formal}
To state our results, we introduce the covariance current
\[\Cov_N:=\E{C_{p_N}\wedge Z_{p_N}}-\E{C_{p_N}}\wedge \E{Z_{p_N}}.\]
The currents $\E{C_{p_N}}$ and $\E{Z_{p_N}}$ are relatively simple (see Theorem \ref{T:EZ} and Remark \ref{R:Zeros One Point Function}) so that the study of $K_N$ and $\Cov_N$ are essentially equivalent. We also define 
\[G(t):=\frac{\gamma}{4}-\frac{1}{4}\int_0^{t^2}\frac{\log(1-s)}{s}ds\qquad t\in [0,1),\]
where $\gamma$ is the Euler-Macheroni constant. Finally, let $\xi \in \C P^1$ and fix $N\geq 1.$ Consider any neighborhood $U$ of $z$ and any holomorphic coordinate $z:U\gives \C$ such that $z(\xi)=0$ and $\w_h(z)=dz\wedge d\cl{z}+o(\abs{z}^2).$ ($z$ is called a K\"ahler normal coordinate and exists on any K\"ahler manifold). We make the following 
\begin{definition}\label{D:Scaled KNC}
We define $u:U\gives \C$ given by 
\[u=u(z):=z\cdot N^{1/2}\] 
to be a $N^{-1/2}-$scale normal coordinate at $\xi.$
\end{definition}
\begin{Thm}[Covariance Current Asymptotics] \label{T:Crit Two Point Function}
Let $h$ be a smooth positive Hermitian metric on $\mathcal O(1),$ and suppose $p_N$ is a degree $N$ polynomial drawn from the Hermitian Gaussian ensemble corresponding to $h.$ \\
\noindent {\bf 1. Global Asymptotics. } For each $N\geq 1,\, \ep>0$ and every $\psi \in C^4(\C P^1\x \C P^1)$
  \begin{equation}
    \label{E:Global PCF}
\left(\frac{1}{N(N-1)}\Cov_N(z,w),\, \psi(z,w)\right)=O(N^{-2+\ep}).
  \end{equation}
The implied constant depends only on $\ep$ and $\psi.$ \\
\noindent {\bf 2. Local Asymptotics. } Fix $N\geq 1,$ $\xi \in \C P^1,$ and a $N^{-1/2}-$scale normal coordinate centered at $\xi.$ For every $\ep>0$
\begin{equation}
  \label{E:Cov BiPot}
\Cov_N(z,w)=\left(\frac{i}{2\pi}\partial_z \cl{\partial}_z\wedge \frac{i}{2\pi}\partial_w \cl{\partial}_w\right)G(P_{\xi}(z,w))+O(N^{-1/2+\ep}),  
\end{equation}
where
\begin{equation}\label{E:PN Scaling Limit}
  P_{\xi}(z,w):=
\begin{cases}
  \case{e^{-\frac{1}{2}\abs{z-w}^2}}{d\phi_{z_0}(\xi)\neq 0\text{ or }\xi=\infty}\\
\case{\frac{\abs{\cl{w}\cdot \DDi{\cl{w}}{\phi_{z_0}}\big|_{\xi}+z}}{\sqrt{1+\abs{\cl{w}\cdot \DDi{\cl{w}}{\phi_{z_0}}\big|_{\xi}+w}^2}}e^{-\frac{1}{2}\abs{z-w}^2}}{d\phi_{z_0}(\xi)=0}
\end{cases}.
\end{equation}
The implied constant in (\ref{E:Cov BiPot}) depends on $\ep,$ and the expression may be paired with any bounded measurable function.
\end{Thm}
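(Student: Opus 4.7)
The strategy is to apply Poincar\'e-Lelong to reduce $\Cov_N$ to a universal function of the normalized cross-covariance between $p_N(z)$ and ${\nabla^{z_0}}^{\otimes N}p_N(w),$ then invoke Sz\"ego kernel asymptotics in scaled K\"ahler normal coordinates. Because $\nabla^{z_0}$ has flat frame $z_0,$ the section ${\nabla^{z_0}}^{\otimes N}p_N$ is holomorphic, so Poincar\'e-Lelong yields
\[Z_{p_N}=\tfrac{i}{2\pi}\partial\cl{\partial}\log\norm{p_N}_h^2,\qquad C_{p_N}=\tfrac{i}{2\pi}\partial\cl{\partial}\log\norm{{\nabla^{z_0}}^{\otimes N}p_N}_h^2,\]
and commuting $\E{\cdot}$ past $\partial\cl{\partial}$ produces
\[\Cov_N(z,w)=\left(\tfrac{i}{2\pi}\partial_z\cl{\partial}_z\wedge\tfrac{i}{2\pi}\partial_w\cl{\partial}_w\right)\Cov\!\left[\log\norm{p_N(z)}_h^2,\log\norm{{\nabla^{z_0}}^{\otimes N}p_N(w)}_h^2\right].\]
For fixed $z,w$ the pair $(p_N(z),{\nabla^{z_0}}^{\otimes N}p_N(w))$ is jointly complex Gaussian, with normalized complex correlation of modulus $P_N(z,w)\in[0,1].$ The standard Wick-calculus identity $\Cov[\log|X|^2,\log|Y|^2]=\sum_{n\ge 1}|\rho|^{2n}/n^2$ for correlated standard complex Gaussians with correlation $\rho,$ together with the fact that additive constants are killed by $\partial\cl{\partial},$ converts this to $\Cov_N(z,w)=(\tfrac{i}{2\pi}\partial_z\cl{\partial}_z\wedge\tfrac{i}{2\pi}\partial_w\cl{\partial}_w)G(P_N(z,w)).$

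\textbf{Global asymptotics.} By off-diagonal Sz\"ego estimates (Shiffman-Zelditch, Christ), $P_N(z,w)\lesssim e^{-cN\abs{z-w}^2}$ whenever $\abs{z-w}\ge N^{-1/2+\ep},$ while $P_N\le 1$ everywhere. Since $G$ is bounded on $[0,1]$ and four integrations by parts move the operators $\partial\cl{\partial}\partial\cl{\partial}$ onto $\psi\in C^4,$ the paired integral $(\Cov_N,\psi)$ is bounded by $\|\partial^4\psi\|_\infty$ times the $4$-dimensional volume of the $N^{-1/2+\ep}$-tube around the diagonal, the off-diagonal contribution being exponentially small. This yields (\ref{E:Global PCF}).

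\textbf{Local asymptotics.} In the scaled coordinate $u=N^{1/2}z$ around $\xi,$ formula (\ref{E:Two Connections}) reads
\[{\nabla^{z_0}}^{\otimes N}p_N=\nabla^{h,\otimes N}p_N-N\,(\partial\phi_{z_0})\,p_N.\]
The first term acts as an $O(N^{1/2})$ holomorphic derivative in the scaled variable; the Taylor expansion of $\partial\phi_{z_0}$ around $\xi$ shows the second contributes a multiplier of order $N\cdot\partial\phi_{z_0}(\xi)+N^{1/2}\cdot(\text{second-order terms in }\phi_{z_0})+O(1).$ Applying the Bargmann-Fock scaling limit
\[N^{-1}\Pi_N(\xi+u/\sqrt N,\xi+v/\sqrt N)\;\longrightarrow\;\tfrac{1}{\pi}e^{u\cl{v}-\frac12\abs{u}^2-\frac12\abs{v}^2}\]
(valid to any order in $N^{-1/2}$ uniformly on compacta) and taking one $\cl{\partial}_w$ derivative to compute $\E{p_N(z)\overline{{\nabla^{z_0}}^{\otimes N}p_N(w)}},$ followed by normalizing each factor by its own $L^2$-variance, produces $P_N\to P_\xi$ in scaled coordinates. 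When $d\phi_{z_0}(\xi)\neq 0$ (or $\xi=\infty,$ where one switches to the $z_1$-frame), the $O(N)$ multiplier dominates the $O(N^{1/2})$ derivative and, after dividing by variances, absorbs the derivative into a scalar; one is left with the pure Fock correlator $e^{-\abs{z-w}^2/2}.$ When $d\phi_{z_0}(\xi)=0$ the leading multiplier drops to $O(N^{1/2})$ and competes with the derivative, producing the algebraic prefactor involving $\DDi{\cl{w}}{\phi_{z_0}}\big|_{\xi}$ in (\ref{E:PN Scaling Limit}). The remainder $O(N^{-1/2+\ep})$ in (\ref{E:Cov BiPot}) tracks the usual Sz\"ego expansion error; since $P_N\le 1-c$ off the diagonal and $G\in C^\infty[0,1-c],$ this error propagates through $G$ and its first four derivatives to $\Cov_N.$

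\textbf{Main obstacle.} The delicate point is the local computation: the dichotomy in (\ref{E:PN Scaling Limit}) is dictated by which of the two contributions to ${\nabla^{z_0}}^{\otimes N}$ survives the normalization by $\norm{{\nabla^{z_0}}^{\otimes N}p_N(w)}_{L^2},$ and identifying the $d\phi_{z_0}(\xi)=0$ scaling limit with the precise ratio of (\ref{E:PN Scaling Limit}) requires simultaneously tracking $N^{-1/2}$-order Taylor terms of $\phi_{z_0}$ and first $\cl{\partial}_w$ derivatives of the Fock kernel. The remaining steps---Poincar\'e-Lelong, Gaussian log-covariance, and off-diagonal Sz\"ego decay---are standard.
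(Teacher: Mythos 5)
Your proposal reproduces the paper's own argument in all essentials: the reduction via Poincar\'e--Lelong, the identification of the log-covariance of two jointly Gaussian fields with (the derivative of) the universal function $G$ evaluated at the normalized cross-correlation $\twiddle{P}_N(z,w)$ (this is exactly Lemma~\ref{L:G} and Lemma~\ref{L:Cov Formula} in the paper, with the factor-of-$4$ reconciliation coming from $\log|X|^2 = 2\log|X|$), and then substitution of the scaled near-diagonal Sz\"ego asymptotics for $\twiddle{P}_N$ derived from the connection lift. The dichotomy in (\ref{E:PN Scaling Limit}) you attribute, correctly, to whether the zero-order multiplier $N\,\partial\phi_{z_0}(\xi)$ or the $O(N^{1/2})$ derivative dominates the lifted connection; the paper carries out that bookkeeping in Lemma~\ref{L:Connection Lift}, Corollary~\ref{C:Local Connection Lift}, and Theorem~\ref{T:Zeros and Crits Corellation}. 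One small remark on the global bound: your phrasing makes it sound as if $G(\twiddle{P}_N)$ is pointwise small off the diagonal, which it is not---it is approximately the nonzero constant $G(0)$ there; what saves the argument is that constants are annihilated by $\int_{\C P^1\x\C P^1}\partial_z\cl{\partial}_z\partial_w\cl{\partial}_w\psi=0.$ With that caveat (and noting that $G$ is in fact bounded on $[0,1]$, so your bound is even a bit sharper than the paper's $O(\log N)$ estimate), the proposal matches the paper's proof step for step.
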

\begin{remark}\label{R:Same as Zeros}
If $d\phi_{z_0}(\xi)\neq 0$ or $\xi=\infty,$ then our local formula for $\Cov_N$ is identical to the formula for variance current $\E{Z_{p_N}\wedge Z_{p_N}}-\E{Z_{p_N}}\wedge \E{Z_{p_N}}$ derived in \cite{NV}. This may seem surprising. It is a consequence, as explained in Section \ref{S:Connection}, of the fact that the large $N$ limit of the derivative $\frac{d}{dz}$ in scaled coordinates near a point $\xi$ satisfying $d\phi_{z_0}(\xi)\neq 0$ or $\xi=\infty$ is an order $0$ differential operator. Zeros and critical points therefore become indistinguishable. In contrast, when $d\phi_{z_0}(\xi)=0,$ the scaling limit of $\frac{d}{dz}$ is a genuine order $1$ differential operator. 
\end{remark}

\noindent Theorem \ref{T:Sendov}, which we state next, is our main application of Theorem \ref{T:Crit Two Point Function}. Cases 1 and 2 are illustrated in Figures 3 and 2, respectively.  

\begin{Thm}[Expected Nearest Neighbor Spacings]\label{T:Sendov}
With the notation of Theorem \ref{T:Crit Two Point Function}, fix $\xi \in \C P^1$ and consider a $N^{-1/2}-$scale normal coordinate centered at $\xi.$ Fix a bounded $A\subset \C$ and write
\[C_A:=\#\setst{w\in A}{\frac{d}{dw}p_N(w)=0}\qquad \text{and}\qquad Z_A:=\#\setst{z\in A}{p_N(z)=0}.\]
\noindent {\bf Case 1.} Suppose that $d\phi_{z_0}(\xi)\neq 0$ or $\xi=\infty.$ Define the random variables $\curly X_{A,N}$ to be the number of pairs $(z,w)\in \C\x \C$ such that 
  \[w\in A, \qquad p_N(z)\,=\,\frac{d}{dw}p_N(w)\,=\,0,\qquad\text{and}\qquad \abs{z-w}\leq N^{-1/4}.\]
Then, for each $\ep>0,$
\begin{equation}
  \label{E:Sendov NonCrit}
\E{\curly X_{A,N}}=\E{C_A}+O(N^{-1/4+\ep})=\E{Z_A}+O(N^{-1/4+\ep}).  
\end{equation}
\noindent {\bf Case 2.} Suppose now that $d\phi_{z_0}(\xi)=0.$ Define $\alpha:= \DDi{\cl{w}}{\phi_{z_0}}(\xi)$ and introduce $\zeta:=\alpha \cl{w}+w.$ Assume that $A\subseteq \C\backslash \set{\abs{\zeta}\leq 1},$ and fix a parameter $c\in (2/3,1).$ Let $\curly X_{A,N,c}$ be the number of pairs $(z,w)\in \C\x \C$ such that 
\[w\in A, \qquad p_N(z)\,=\,\frac{d}{dw}p_N(w)\,=\,0,\qquad\text{and}\qquad z=w+re^{it}\]
with 
\begin{equation}
  \label{E:Constraints}
\qquad \qquad r\in \left[\abs{\zeta}^{-1}-\abs{\zeta}^{-1-c}, \abs{\zeta}^{-1}+\abs{\zeta}^{-1-c}\right]\quad \text{and}\quad t\in \left[\arg(\zeta)-\abs{\zeta}^{-c}, \arg(\zeta)+\abs{\zeta}^{-c}\right].  
\end{equation}
We have for each $\ep>0$
\begin{equation}
  \label{E:Crit Sendov Pairing}
\E{\curly X_{A,N,c}}=\E{C_A}+O\left(\int_A \abs{\zeta}^{2-3c}dw \wedge d\cl{w}\right) +O(N^{-1/2+\ep}),
\end{equation}
where the implied constant in the first error term depends only on $c$ and in the second error term depends only on $\ep.$
\end{Thm}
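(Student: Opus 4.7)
The plan is to express $\E{\curly X_{A,N,c}}$ as an integral of the joint intensity current $K_N$ over the pairing region and then evaluate it using the asymptotics of Theorem \ref{T:Crit Two Point Function}. By the Kac-Rice identity applied to the counting measure of simultaneous zeros of $p_N$ and $p_N'$,
\[
\E{\curly X_{A,N,c}} \;=\; \int_{w \in A}\int_{z \in B_c(w)} K_N(z,w),
\]
where $B_c(w) \subset \C$ denotes the annular sector specified by (\ref{E:Constraints}) (and in Case 1, the disk $|z - w| \leq N^{-1/4}$). Decompose $K_N = \Cov_N + \E{C_{p_N}} \wedge \E{Z_{p_N}}$ and treat the two terms separately.

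The product term is controlled using the uniform $O(1)$ density estimates for $\E{C_{p_N}}$ and $\E{Z_{p_N}}$ in scaled coordinates, provided by Theorem \ref{T:EZ} and its analogue for zeros from \cite{PLL}. Since $B_c(w)$ has area of order $|\zeta|^{-2-2c}$ in Case 2 and $N^{-1/2}$ in Case 1, the product-term contribution is $O\bigl(\int_A |\zeta|^{-2-2c}\, dw\wedge d\bar w\bigr)$, absorbed into the first error term of (\ref{E:Crit Sendov Pairing}), respectively $O(N^{-1/2} \cdot \mathrm{Area}(A))$, absorbed into $O(N^{-1/4+\ep})$ in (\ref{E:Sendov NonCrit}).

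The heart of the proof is the covariance term. Substitute the scaling limit (\ref{E:Cov BiPot}), which incurs the $O(N^{-1/2+\ep})$ remainder after pairing against a smooth test region. In Case 2, a direct computation shows that $P_\xi(\cdot, w)$ is maximized at $z - w = \zeta/|\zeta|^2 + O(|\zeta|^{-3})$, exactly the center of $B_c(w)$, with maximum value $1 - O(|\zeta|^{-4})$ and transverse Hessian of order $|\zeta|^2$. The function $G$ extends continuously to $t = 1$ but $G''(t) \sim (1-t^2)^{-1}$ there, so the bi-Laplacian of $G(P_\xi(\cdot, w))$ has a sharp peak of width $|\zeta|^{-1}$ along the pairing arc. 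A Laplace-type computation of the resulting integral, using Stokes' theorem in $z$ to reduce the inner integral to a boundary flux of $d^c G(P_\xi)$, identifies the covariance contribution as $\E{C_A}$ plus boundary errors from the sector cutoff; the widths $|\zeta|^{-1\pm c}$ and $|\zeta|^{-c}$ are chosen precisely so that these errors are of order $|\zeta|^{2-3c}$, and the restriction $c > 2/3$ ensures they decay in $|\zeta|$ so as to be integrable over $A \subseteq \C\setminus\{|\zeta|\leq 1\}$.

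The main obstacle is carrying out the uniform-in-$w$ asymptotic analysis of this bi-Laplacian integral and bounding the boundary corrections, in particular matching the leading contribution to $\E{C_{p_N}}(w)\, dw \wedge d\bar w$ rather than some other potential of $\Cov_N$. Case 1 is considerably simpler because $P_\xi(z,w) = e^{-|z-w|^2/2}$ has a unique maximum at $z = w$: the inner integral over the disk $|z-w| \leq N^{-1/4}$ reduces to a rotationally symmetric radial integral dominated by a neighborhood of the origin, yielding the $O(N^{-1/4+\ep})$ error. The second equality $\E{\curly X_{A,N}} = \E{Z_A} + O(N^{-1/4+\ep})$ follows by swapping the roles of zeros and critical points and applying the same argument to the variance current of zeros, whose scaling limit agrees with (\ref{E:Cov BiPot}) in Case 1 by Remark \ref{R:Same as Zeros}.
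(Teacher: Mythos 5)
Your overall framework is correct and matches the paper: express $\E{\curly X}$ as an integral of the joint intensity over the pairing region, decompose $\E{Z_{p_N}\wedge C_{p_N}}=\Cov_N+\E{Z_{p_N}}\wedge\E{C_{p_N}}$, dispose of the product term by the one--point densities, and reduce the covariance term to the bi-potential $G(\twiddle P_N)$. Your identification that $P_\xi(\cdot,w)$ peaks at $z-w\approx \zeta/\abs\zeta^2$ and that this locates the sector (\ref{E:Constraints}) is also on target.

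The genuine gap is that the heart of the argument --- the asymptotic evaluation of $\int_{B_c(w)}\left(\frac{i}{\pi}\right)^2 \partial_z\cl\partial_z\partial_w\cl\partial_w\, G(\twiddle P_N)$, with the identification of the leading term as $\E{C_A}$ and control of the errors --- is not carried out. You flag it yourself as ``the main obstacle,'' and the Stokes/boundary-flux reduction you propose is left unproven; in particular it is not clear from your sketch how the boundary flux of $d^c G(P_\xi)$ along the angular and radial sides of the annular sector yields errors of order $\abs\zeta^{2-3c}$ rather than something larger, nor how the interior contribution is matched precisely to $\E{C_{p_N}}(w)$. The paper does this differently and completely: it sets $\leb(z,w):=-\log\twiddle P_\infty$, writes $F(\leb)=G(e^{-\leb})$, computes the four derivatives of $F(\leb)$ explicitly so that the dominant term is $F^4\abs{\partial_z\leb}^2\abs{\partial_w\leb}^2$, obtains sharp regional estimates on $(1-\twiddle P_\infty^2)^{-1}$ (Lemma \ref{L:Crit Dist}) and on $\abs{\partial_z\leb},\abs{\partial_w\leb}$ (Lemma \ref{L:Partials Estimates}), and evaluates the inner integral by the change of variables $(x,y)=(\abs\zeta\ep,\abs\zeta(t-\arg\zeta))$, where the $\frac{1}{8}\int\int (1/2+x^2+y^2)^{-3}\,dx\,dy=\pi/4$ identity produces the constant. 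None of these steps appears in your sketch.

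A second, subtler issue: in Case 1 you propose to ``substitute the scaling limit (\ref{E:Cov BiPot})'' against the indicator of $\{\abs{z-w}\le N^{-1/4}\}$, but this test set shrinks with $N$, and the scaling-limit current $\left(\frac{i}{2\pi}\right)^2\partial\cl\partial\partial\cl\partial\, G(e^{-\frac{1}{2}\abs{z-w}^2})$ carries a genuine delta mass on the diagonal coming from $F'(\leb)\to-\infty$. Plugging it into an $N$-dependent indicator requires control beyond the stated $O(N^{-1/2+\ep})$ estimate against fixed bounded functions. The paper sidesteps this by never substituting the limit: it keeps the finite-$N$ quantity $\twiddle P_N$, uses (\ref{E:P Est 1})--(\ref{E:P Est 3}) to write $1-\twiddle P_N^2\approx\abs{z-w}^2+RN^{-1/2}$ with an explicit $N^{-1/2}$ floor, and invokes the Perturbed Laplace lemma (Lemma \ref{L:Perturbed Laplace}) to show that the regularized logarithm behaves like $\pi\delta_0$ up to $O(N^{-1/4+\ep})$. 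Your approach needs an analogous regularization argument which is absent. Finally, the equality $\E{\curly X_{A,N}}=\E{Z_A}+O(N^{-1/4+\ep})$ does not require ``swapping roles'' and redoing the covariance analysis: it follows at once from $\E{C_A}=\E{Z_A}+O(N^{-1/2})$, which is immediate from (\ref{E:Local 1PF NonCrit}) and Remark \ref{R:Zeros One Point Function}.
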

\begin{remark}
The situation in Case 1 of Theorem \ref{T:Sendov} is the generic behavior. Indeed, the positivity of $h$ means that $\phi_{z_0}$ is subharmonic and so can vanish at only finitely many points in any bounded subset of $\C P^1\backslash\set{\infty}.$ Moreover, $\phi_{z_0}$ must take the form $\log(1+\abs{z}^2)+\psi(z)$ for some $\psi$ that is smooth function on all of $\C P^1.$ Since the derivative of $\psi$ is bounded while the derivative of $\log(1+\abs{z}^2)$ is unbounded at infinity, $d\phi_{z_0}\neq 0$ in some neighborhood of $\infty.$ 
\end{remark}
\begin{remark}
In the case when $d\phi_{z_0}(\xi)=0,$ the constrains (\ref{E:Constraints}) form a tight sector in polar coordinates around each critical point. The radial and angular widths of this sector decrease as $\abs{w}$ grows and are determined by the $2-$jet of $\phi_{z_0}$ at $\xi$ via the paramter $\alpha.$ Different values of $\alpha$ can cause rather different kinds of pairings. For example, $\alpha=0$ means $\zeta=w,$ while $\alpha=1$ means $\zeta=\Re w.$
\end{remark}

\noindent In addition to studying the covariance current $\Cov_N,$ we study for any fixed $\xi\in \C P^1$ the conditional $(1,1)-$current
\[\E{C_{p_N}\,|\, p_N(\xi)=0}\]
in the sense of Shiffman, Zelditch, and Zhong in \cite{Conditional}. Since the event $p_N(\xi)=0$ has probability $0,$ we specify that the particular random variable used to define the conditional expectation is the evaluation map $ev_{\xi}$ at $\xi.$ Continuing the trend of Theorems \ref{T:Crit Two Point Function} and \ref{T:Sendov}, we see in Theorem \ref{T:Crit Given Zero One Point Function} that near $\xi$ satisfying $d\phi_{z_0}(\xi)\neq 0$ or $\xi=\infty,$ the current $\E{C_{p_N}\,|\, p_N(\xi)=0}$ is indistinguishable from the current $\E{Z_{p_N}\,|\, p_N(\xi)=0}$ studied in \cite{Conditional}. Near $\xi$ satisfying $d\phi_{z_0}(\xi)=0,$ however, a different behavior emerges. 

\begin{Thm} \label{T:Crit Given Zero One Point Function}
Let $h$ be a smooth positive Hermitian metric on $\mathcal O(1)\twoheadrightarrow \C P^1$ and suppose $p_N$ is a degree $N$ polynomial drawn from the Hermitian Gaussian ensemble corresponding to $h.$ Fix $\xi \in\C P^1.$\\
\noindent {\bf 1. Global Asymptotics.}  For each $N\geq 1$ and every $\psi \in C^{\infty}(\C P^1),$
  \begin{equation}
    \label{E:Global OPF}
\left(\E{C_{p_N}\,|\,p_N(\xi)=0},\psi\right)=N\cdot \left(\E{C_{p_N}},\psi\right)+O(1).
    \end{equation}
The implied constant is independent of $N.$ \\
\noindent {\bf 2. Local Asymptotics. } Fix $\zeta \in \C P^1$ and a $N^{-1/2}-$scale normal coordinates centered at $\zeta.$ If $\xi\neq \zeta,$ then for each $k\geq 1.$
\begin{equation}
  \label{E:Conditional at Diff Pt}
  \E{C_{p_N}\,|\, p_N(\xi)=0}(w)=\E{C_{p_N}}(w)+O(N^{-k}).
\end{equation}
Suppose now $\xi=\zeta.$ Then, if $d\phi_{z_0}(\xi)\neq 0$ or $\xi=\infty,$ we have for any $\ep>0$
\begin{equation}
  \label{E:Crit Given Zero Scaled I}
  \E{C_{p_N}\,|\, p_N(\xi)=0}(u) = \E{C_{p_N}}(u)+\frac{i}{2\pi}\partial_u\cl{\partial}_u\log(1-e^{-\abs{u}^2}) + O(N^{-1/2+\ep}).
\end{equation}
Finally, if $\xi=\zeta$ and $d\phi_{z_0}(\xi)=0,$ then $\E{C_{p_N}\,|\, p_N(\xi)=0}(u)$ is the smooth $(1,1)-$form
\begin{equation}\label{E:Crit Given Zero Scaled II}
\frac{i}{2\pi}\partial_u\cl{\partial}_u\left( \log (1-P_{\xi}(u,0)^2)+\log\left(1+\abs{\DDi{u}{\phi_{z_0}}\bigg|_0\cdot  u+\cl{u}}^2\right)\right)+O(N^{-1/2+\ep}),
\end{equation}
where, as in Theorem \ref{T:Crit Two Point Function}, 
\[P_{\xi}(u,0)=\frac{\abs{\cl{u}\cdot \DDi{\cl{u}}{\phi_{z_0}}\big|_{\xi}}}{\sqrt{1+\abs{\cl{u}\cdot \DDi{\cl{u}}{\phi_{z_0}}\big|_{\xi}+u}^2}}e^{-\frac{1}{2}\abs{u}^2}.\]
\end{Thm}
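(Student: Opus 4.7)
The plan is to reduce everything to a single identity for $\E{C_{p_N}\mid p_N(\xi)=0}$ in terms of the Szeg\H{o} kernel $\Pi_N$ of (\ref{E:Inner Product}), and then extract the three assertions from Szeg\H{o} asymptotics as in \cite{PLL,NV,Conditional}. By the Poincar\'e--Lelong formula on $\C\subset \C P^1,$
\[C_{p_N} = \frac{i}{2\pi}\partial_w\cl{\partial}_w \log\abs{p_N'(w)}^2,\]
and since conditional expectation commutes with $\partial\cl{\partial},$ I only need the conditional law of $p_N'(w)$ given $p_N(\xi)=0.$ Because $(p_N(\xi),p_N'(w))$ is a centered complex bivariate Gaussian whose covariances are expressible via $\Pi_N$ in the meromorphic frame $z_0^N,$ the conditional law of $p_N'(w)$ is centered complex Gaussian with variance
\[V_N(w,\xi) := \partial_w\cl{\partial}_w\Pi_N(w,w) - \frac{\abs{\partial_w\Pi_N(w,\xi)}^2}{\Pi_N(\xi,\xi)}.\]
Combining with $\E{\log\abs{X}^2}=\log\mathrm{Var}(X)-\gamma$ for a centered complex Gaussian $X$ (the Euler constant drops under $\partial\cl{\partial}$), I obtain the key identity
\[\E{C_{p_N}\mid p_N(\xi)=0}(w) = \E{C_{p_N}}(w) + \frac{i}{2\pi}\partial_w\cl{\partial}_w \log\bigl(1-R_N(w,\xi)\bigr), \qquad R_N(w,\xi) := \frac{\abs{\partial_w\Pi_N(w,\xi)}^2}{\Pi_N(\xi,\xi)\,\partial_w\cl{\partial}_w\Pi_N(w,w)},\]
with the first summand supplied by Theorem \ref{T:EZ}. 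The three asymptotic claims now reduce to distinct asymptotic regimes of the single quantity $R_N.$

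For (\ref{E:Global OPF}), Cauchy--Schwarz gives $0\le R_N\le 1,$ and uniform Catlin--Zelditch lower bounds on the diagonal Szeg\H{o} kernel upgrade this to $R_N\le 1-c$ on compacta disjoint from $\xi.$ Integration by parts against $\psi\in C^\infty(\C P^1)$ converts the $\partial\cl{\partial}\log(1-R_N)$ term into an $O(1)$ pairing with $\partial\cl{\partial}\psi,$ producing the stated bound. For (\ref{E:Conditional at Diff Pt}), when $\zeta\ne\xi$ the off-diagonal H\"ormander--Boutet de Monvel--Sj\"ostrand decay of the Szeg\H{o} kernel gives $\abs{\partial_w^a\cl{\partial}_w^b\Pi_N(w,\xi)} = O\bigl(N^Ce^{-cN\abs{w-\xi}^2}\bigr),$ so in an $N^{-1/2}$-scaled neighborhood of $\zeta$ every $u$-derivative of $\log(1-R_N)$ is $O(N^{-k})$ for all $k.$

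For the local scaling limit at $\xi=\zeta,$ I would substitute the $N^{-1/2}$-scaled K\"ahler normal coordinate $u$ and invoke the near-diagonal Szeg\H{o} expansion in K\"ahler normal coordinates, translating the meromorphic connection via $\nabla^{z_0}=\nabla^h-\partial\phi_{z_0}$ from (\ref{E:Two Connections}). When $d\phi_{z_0}(\xi)\ne 0$ or $\xi=\infty,$ the scaled symbol of $\partial_u$ is of order zero (cf.\ Remark \ref{R:Same as Zeros}), so the scaled $R_N$ collapses to the analogous ratio for zeros, which is the universal Bargmann--Fock value $e^{-\abs{u}^2}$; combining with the (essentially flat) scaling limit of $\E{C_{p_N}}$ supplied by Theorem \ref{T:EZ} gives (\ref{E:Crit Given Zero Scaled I}). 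When $d\phi_{z_0}(\xi)=0,$ the scaled $\partial_u$ remains a genuine first-order operator whose principal part is determined by $\alpha=\DDi{\cl w}{\phi_{z_0}}(\xi)$; expanding $\partial_u\Pi_N(u,0),$ $\partial_u\cl{\partial}_u\Pi_N(u,u),$ and $\Pi_N(0,0)$ in K\"ahler normal coordinates to order $N^{-1/2+\ep}$ produces $R_N\goesto P_{\xi}(u,0)^2,$ while Theorem \ref{T:EZ} gives the scaling limit $\frac{i}{2\pi}\partial_u\cl{\partial}_u\log\bigl(1+\abs{\alpha u+\cl u}^2\bigr)$ of $\E{C_{p_N}},$ and adding these yields (\ref{E:Crit Given Zero Scaled II}).

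The main obstacle is the bookkeeping in the $d\phi_{z_0}(\xi)=0$ case: both the $\alpha$-dependent numerator of $P_{\xi}(u,0)^2$ and the Hessian-of-$\phi_{z_0}$ factor $1+\abs{\alpha u+\cl u}^2$ arise from subleading terms of the Szeg\H{o} expansion that survive only after logarithmic differentiation, so one must expand to sufficient order and verify a uniform positive lower bound on $1-R_N$ on compact subsets of the scaled coordinate (a strict Cauchy--Schwarz for the scaled Gaussian vector $(p_N(\xi),p_N'(w))$). Standard Szeg\H{o} remainder estimates as in \cite{NV,Conditional} then produce the $O(N^{-1/2+\ep})$ error.
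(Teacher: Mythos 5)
Your proof is correct and is essentially the paper's argument in a slightly different dress. The paper derives the master identity
\[
\E{C_{p_N}\mid p_N(\xi)=0}(w)-\E{C_{p_N}}(w)=\frac{i}{2\pi}\partial_w\cl{\partial}_w\log\bigl(1-\twiddle{P}_N(\xi,w)^2\bigr)
\]
via the conditional Szeg\H{o} kernel $\Pi_N^\xi$ and a coherent-state/orthogonal-projection computation (Lemma \ref{L:Conditional Szego I}), whereas you obtain the identical formula by conditioning the bivariate complex Gaussian $(p_N(\xi),p_N'(w))$ directly and applying Poincar\'e--Lelong together with $\E{\log\abs{X}^2}=\log\mathrm{Var}(X)+\mathrm{const}$; your $R_N(w,\xi)$ is exactly the paper's $\twiddle{P}_N(\xi,w)^2$ expressed in the frame $z_0^N$, since the $e^{-N\phi_{z_0}}$ factors in the $h^N$-norms cancel in the ratio. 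The subsequent extraction of (\ref{E:Conditional at Diff Pt})--(\ref{E:Crit Given Zero Scaled II}) from far- and near-off-diagonal Szeg\H{o} asymptotics is what the paper does. One place you are terser than the argument needs to be is the global estimate (\ref{E:Global OPF}): the bound ``$R_N\le 1-c$ on compacta disjoint from $\xi$'' does not by itself control the pairing with $\partial\cl\partial\psi$, because $1-R_N\to 0$ like $\abs{w-\xi}^2$ near the conditioning point and the $\log$ has an integrable but $N$-dependent singularity. The paper closes this by splitting into $\abs{w-\xi}\gtrless\bigl(\frac{\log N}{N}\bigr)^{1/2}$: outside that ball $\twiddle{P}_N=O(N^{-k})$ by far-off-diagonal decay, and inside it one feeds in the already-established local scaling asymptotics (\ref{E:Crit Given Zero Scaled I})--(\ref{E:Crit Given Zero Scaled II}) to show the near-diagonal contribution is $O(1)$. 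You should make that two-region argument explicit; the rest of your outline matches the paper step for step.
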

\begin{remark}
 Expression (\ref{E:Crit Given Zero Scaled I}) may be written explicitly as
\begin{equation}
  \label{E:Crit Given Zero Scaled I Explicit}
  \E{C_{p_N}\,|\, p_N(\xi)=0}(u) = \pi \delta_0(u) + \left(\frac{1-(1+\abs{u}^2)e^{-\abs{u}^2}}{(1-e^{-\abs{u}^2})^2}+1\right)\frac{i}{2\pi}\partial_u\cl{\partial}_u\abs{u}^2 + O(N^{-1/2+\ep}),
\end{equation}
and it coincides precisely with the expression obtained in \cite{Conditional} for $N^{-1/2}-$ scale current of $\E{Z_{p_N}\,|\,p_N(\xi)=0}.$ The reason is just as in Remark \ref{R:Same as Zeros}.
\end{remark}

\noindent Finally, we state our result about the local and global asymptotics for $\E{C_{p_N}}(w).$
\begin{Thm} \label{T:EZ}
Let $h$ be a smooth positive Hermitian metric on $\mathcal O(1)\twoheadrightarrow \C P^1$ and suppose $p_N$ is a degree $N$ polynomial drawn from the Hermitian Gaussian ensemble corresponding to $h.$ \\
\noindent {\bf 1. Global Asymptotics} For each $N\geq 1$ and every $\psi \in C^{\infty}(\C P^1)$
  \begin{equation}
    \label{E:Crit Global One Point}
\left(\E{C_{p_N}},\psi\right)=N\cdot (\w_h,\psi)+O(1)
  \end{equation}
with the implied constant depending only on $\psi.$ \\
\noindent {\bf 2. Local Asymptotics. } Fix $\xi \in \C P^1$ and scaled normal coordinate centered at $\xi.$ For each $N\geq 1$ we may write 
\begin{equation}
  \label{E:Local 1PF NonCrit}
\E{C_{p_N}}(w)= \frac{1}{\pi}\partial \cl{\partial}\abs{w}^2+O(N^{-1/2}).  
\end{equation}
if $d\phi_{z_0}(\xi)\neq 0$ or $\xi=\infty.$ If, on the other hand, $d\phi_{z_0}(\xi)=0,$ then  
  \begin{equation}
    \label{E:Local 1PF Crit}
\E{C_{p_N}}(w)= \frac{i}{2\pi}\partial \cl{\partial}\log \left(1+\abs{\DDi{w}{\phi_{z_0}}\bigg|_{\xi}w+\cl{w}}^2\right)+\frac{1}{\pi}\partial \cl{\partial}\abs{w}^2+O(N^{-1/2}).
  \end{equation}
Both expressions (\ref{E:Local 1PF Crit}) and (\ref{E:Local 1PF NonCrit}) may be paired with any bounded measurable function.  
\end{Thm}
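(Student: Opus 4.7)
The plan is to reduce $\E{C_{p_N}}$ to the $\partial\cl{\partial}$ of the logarithm of a single Szeg\"o-type kernel and then read off the claimed formulas from the known off-diagonal Bergman/Szeg\"o kernel asymptotics. In the frame $z_0^{\otimes N}$, the section $\nabla^{z_0\otimes N}p_N$ is represented by the scalar holomorphic function $p_N'(z)$, and $C_{p_N}$ restricted to $\C$ is its zero divisor. Poincar\'e--Lelong then gives $C_{p_N}\big|_{\C}=\frac{i}{\pi}\partial\cl{\partial}\log\abs{p_N'(z)}$, and since for fixed $z$ the random variable $p_N'(z)$ is a centered complex Gaussian with variance
\[
D_N(z):=\partial_z\cl{\partial}_w B_N(z,w)\big|_{w=z},\qquad B_N(z,w):=\sum_j S_j(z)\cl{S_j(w)},
\]
the identity $\E{\log \abs{a}}=\mathrm{const}$ for a standard complex Gaussian $a$ yields
\[
\E{C_{p_N}}\big|_{\C}=\frac{i}{2\pi}\,\partial\cl{\partial}\log D_N(z).
\]
The chart at $\infty$ is handled symmetrically via $z\mapsto 1/z$ and contributes only constants to the global pairing.

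For the asymptotics of $D_N$ I invoke the off-diagonal expansion $B_N(z,w)=\frac{N}{\pi}e^{N\Psi(z,w)}(1+O(N^{-1}))$, where $\Psi(z,w)$ is the polarization of $\phi_{z_0}$, i.e.\ the unique function holomorphic in $z$ and antiholomorphic in $w$ with $\Psi(z,z)=\phi_{z_0}(z)$. Differentiating once in $z$ and once in $\cl{w}$ and restricting to the diagonal (using $\partial_z\Psi(z,z)=\partial_z\phi_{z_0}(z)$ and the analogous identities for the other derivatives) gives
\[
D_N(z)=\frac{N^2}{\pi}\,e^{N\phi_{z_0}(z)}\Bigl(N\,\abs{\partial_z\phi_{z_0}(z)}^2+\partial_z\cl{\partial}_z\phi_{z_0}(z)\Bigr)\bigl(1+O(N^{-1})\bigr),
\]
so
\[
\log D_N(z)=N\phi_{z_0}(z)+\log\Bigl(N\abs{\partial_z\phi_{z_0}}^2+\partial_z\cl{\partial}_z\phi_{z_0}\Bigr)+\mathrm{const}+O(N^{-1}).
\]

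Applying $\tfrac{i}{2\pi}\partial\cl{\partial}$: the leading piece $N\cdot\tfrac{i}{2\pi}\partial\cl{\partial}\phi_{z_0}=N\w_h$ produces the global asymptotic (\ref{E:Crit Global One Point}), since the subleading log term is locally integrable (its only singularities are logarithmic at the isolated zeros of $\partial\phi_{z_0}$) and pairs to $O(1)$ against any $\psi\in C^\infty(\C P^1)$. For the local statement, I pass to $u=N^{1/2}z$ centered at $\xi$. If $d\phi_{z_0}(\xi)\neq 0$ or $\xi=\infty$, then $\partial_z\phi_{z_0}(\xi)$ is a nonzero constant, the $N\abs{\partial_z\phi_{z_0}}^2$ term dominates inside the log, and the entire log contribution becomes $u$-independent up to $O(N^{-1/2})$; only the K\"ahler-normal Taylor expansion of $N\phi_{z_0}$ survives, producing $\tfrac{1}{\pi}\partial\cl{\partial}\abs{u}^2$ and establishing (\ref{E:Local 1PF NonCrit}). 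If $d\phi_{z_0}(\xi)=0$, the Taylor expansion of $\partial_z\phi_{z_0}$ at $\xi$ begins at first order,
\[
\partial_z\phi_{z_0}\bigl(u/\sqrt N\bigr)=\tfrac{1}{\sqrt N}\Bigl(\cl{u}+\DDi{w}{\phi_{z_0}}\big|_\xi\cdot u\Bigr)+O(N^{-1}),
\]
using the normalization $\partial_z\cl{\partial}_z\phi_{z_0}(\xi)=1$ built into K\"ahler normal coordinates. Substituting into $\log(N\abs{\partial_z\phi_{z_0}}^2+\partial_z\cl{\partial}_z\phi_{z_0})$ gives $\log\bigl(1+\abs{\DDi{w}{\phi_{z_0}}\big|_\xi\cdot u+\cl{u}}^2\bigr)+O(N^{-1/2})$, which is precisely the additional log term in (\ref{E:Local 1PF Crit}) after combining with the $N\phi_{z_0}$ contribution.

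The main technicality is uniformity: the Szeg\"o expansion must be differentiated twice and evaluated at scaled points $z=u/\sqrt N$ in shrinking neighborhoods of $\xi$, and each subsequent $\partial_u\cl{\partial}_u$ inflates errors by a factor of $N$, so the off-diagonal remainders must decay fast enough to survive. This is precisely the regime covered by the off-diagonal Bergman kernel estimates of \cite{PLL, Universality, NV, Conditional}, so the bulk of the proof amounts to invoking those estimates and carefully bookkeeping the K\"ahler-normal Taylor coefficients of $\phi_{z_0}$, with the two cases distinguished solely by whether $d\phi_{z_0}(\xi)$ vanishes.
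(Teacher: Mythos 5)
Your proposal follows essentially the same route as the paper: both reduce $\E{C_{p_N}}$ via the probabilistic Poincar\'e--Lelong identity to $\frac{i}{2\pi}\partial\cl{\partial}\log D_N(z)$ with $D_N(z)=\sum_j\abs{f_j'(z)}^2$ (the paper phrases this invariantly as $\norm{\nabla^{z_0}\otimes\cl{\nabla^{z_0}}\Pi_N(z,z)}_{h^N}$ via Corollary~\ref{C:Crit Expected Density}), and both use Bergman/Szeg\"o kernel asymptotics to extract $D_N\sim \text{const}\cdot N^2 e^{N\phi_{z_0}}\bigl(N\abs{\partial\phi_{z_0}}^2+\partial\cl{\partial}\phi_{z_0}\bigr)$; the only cosmetic difference is that you differentiate an off-diagonal parametrix directly in the $z_0^{\otimes N}$-trivialization, while the paper computes the lifted connection $\widehat{\nabla^{z_0}}$ on the circle bundle and applies the on-diagonal expansion. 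One small imprecision: in the global step, the "log singularities" argument applies to $\log\bigl(\abs{\partial\phi_{z_0}}^2+N^{-1}\partial\cl{\partial}\phi_{z_0}\bigr)$ after first discarding the additive constant $\log N$ (killed by $\partial\cl{\partial}$); the function $\log\bigl(N\abs{\partial\phi_{z_0}}^2+\partial\cl{\partial}\phi_{z_0}\bigr)$ itself is smooth but grows like $\log N$, so without this normalization the pairing estimate is not immediate.
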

\begin{remark}\label{R:Zeros One Point Function}
It is an easy consequence of Lemma \ref{L:PLL} below (and was proved as Lemma 3.1 in \cite{Quantum} for example) that in $N^{-1/2}-$scale normal coordinate centered at any $\xi\in \C P^1,$ we have
\[\E{Z_{p_N}}(z)= \frac{1}{\pi}\partial \cl{\partial}\abs{z}^2+O(N^{-1/2}),\]
which coincides with (\ref{E:Local 1PF NonCrit}). The reason that zeros and critical points have the same expected distribution near $\xi$ satisfying $d\phi_{z_0}(\xi)=0$ or $\xi=\infty$ is the same as in Remark \ref{R:Same as Zeros}. 
\end{remark}

\subsection{Smooth Versus Holomorphic Critical Points}\label{S:Smooth VS Holomorphic}
In order to put the current work in perspective, we contrast our purely holomorphic notion of critical points with the smooth critical points studied in \cite{VacuaI, VacuaII, VacuaIII, Transportation}. Let $p_N$ be a random polynomial of degree $N$ drawn from the Hermitian Gaussian ensemble corresponding to a fixed smooth positive hermitian metric $h$ on $\mathcal O(1).$ 

As explained in Section \ref{S:CP1}, it is natural to view $p_N(z)$ as a holomorphic section of the line bundle $\mathcal O(N)\twoheadrightarrow \C P^1.$ Critical points for sections of lines bundles depend on the choice of a connection. A natural choice of connection on $\mathcal O(N)$ is the $N^{th}$ tensor power of $\nabla^h,$ the metric connection compatible with $h.$ Critical points with respect to $\nabla^h$ are what we refer to as smooth critical points. Holomorphic critical points correspond to the meromorphic connection $\nabla^{z_0}$ on $\mathcal O(1)$ defined in (\ref{E:Two Connections}) (cf Section \ref{S:Connection}). Thus, while holomorphic critical points are solutions of $\frac{d}{dz}p_N(z)=0,$ smooth critical points are solutions of $\frac{d}{dz}\left[p_N(z)\cdot \norm{z_0(z)}_h^{-N}\right]=0.$ 

With respect to the usual frame $z_0$ of $\mathcal O(1)$ over $\C P^1\backslash\set{\infty}$ we deduce from equation (\ref{E:Two Connections})
\[{\nabla^{z_0}}^{\otimes N}=\nabla^h-N\cdot \partial \phi_{z_0}.\]
The condition $d\phi_{z_0}(\xi)\neq 0$ implies that $\nabla^{z_0}$ becomes an order $0$ operator to top order in $N$ locally around $\xi.$ Zeros and critical points therefore become indistinguishable in the large $N$ limit. This accounts for the importance of the condition $d\phi_{z_0}(\xi)=0$ in Theorems \ref{T:Crit Two Point Function}-\ref{T:EZ}. 

\subsection{Acknowledgements} 
This work was suggested by Steve Zelditch, whose patient explanations about his work on Sz\"ego kernels and scaling limits for correlations of zeros have been invaluable. I would also like to thank Dean Baskin, Leonid Hanin, Eric Potash, Pokey Rule, Josh Shadlen, and Jared Wunsch for many useful discussions. Finally, I am indebted to Manjunath Krishnapur and Ron Peled for kindly sharing with me the Matlab code which I modified to produce the above figures.

\subsection{Outline of Paper}
The rest of the paper is organized as follows. First, in Sections \ref{S:SU2} and \ref{S:BF}, we introduce two model ensembles of Gaussian analytic functions that appear naturally in our work: the $SU(2)$ polynomials and the Bargmann-Fock random analytic functions. In Section \ref{S:CP1}, we recall some basic complex geometry and establish notation that will be used throughout. Next, in Section \ref{S:Scaling}, we recall some notions from K\"ahler geometry and introduce the scaling limits of Theorems \ref{T:Crit Two Point Function}-\ref{T:EZ}. We then recall in Section \ref{S:BS} how Sz\"ego kernels and their off-diagonal asymptotics are analyzed. We derive the scaling asymptotics for $\frac{d}{dz}$ in Section \ref{S:Connection} and obtain as a consequence asymptotics for holomorphic derivatives of the Sz\"ego kernels in Section \ref{S:BS Asymptotics Derivs}. The computations in this section are the extra ingedients needed to apply the methods of \cite{PLL} and \cite{NV} to our problem. In Section \ref{S:Relation}, we recall and provide a proof of an important lemma that relates Sz\"ego kernels to the distribution of zeros and critical points for holomorphic sections of line bundles in general. Finally, in Sections \ref{S:Proof 1pt}-\ref{S:Proof Sendov}, we give proofs of our results. 

\section{Background}\label{S:Background}
\noindent We begin by introducing in Sections \ref{S:SU2} and \ref{S:BF} two important and well-studied ensembles of random analytic functions: the $SU(2)$ polynomials and the Bargmann-Fock random analyltic functions. 

\subsection{$SU(2)$ Polynomials}\label{S:SU2}
The $SU(2)$ polynomials are the most computationally tractable of the Hermitian Gaussian ensembles. We introduce them for several reasons. First, we wish to derive the results of Theorem \ref{T:EZ} without appealing to any difficult Sz\"ego Kernel asymptotics, which are necessary to treat the case of a general Hermitian Gaussian ensembles. Second, we wish to illustrate how the distribution of zeros and critical points near some $\xi \in \C P^1$ depends on whether $d\phi_{z_0}(\xi)$ vanishes. We mention that Feng and Wang in \cite{Renjie} study the distribution of critical values (rather than critical points) for $SU(2)$ polynomials. 

In the language of our paper, $SU(2)$ polynomials are the Hermitian guassian ensemble corresponding to the Fubini-Study metric $h_{FS}$ on $\mathcal O(1).$ Over a point $[z_0:z_1]\in \C P^1,$ the meric $h_{FS}([z_0:z_1])$ is obtained by restricting the usual Hermitian inner product on $\C^2$ to the complex line $\mathcal O(-1)|_{[z_0:z_1]}$ and considering the dual metric. The name $SU(2)$ polynomials comes from the fact that $h_{FS}$ is invariant under the natural action of $SU(2)$ on $\C P^1.$ We denote as in Section \ref{S:CP1} by 
\[[z_0:z_1]\mapsto\frac{z_1}{z_0}=:z\]
the standard affine coordinate on $\C P^1-\set{\infty}$ and by $z_0^N=z_0^{\otimes N}$ the standard frame for $\mathcal O(N)$ over $\C P^1\backslash\set{\infty}.$ A degree $N$ $SU(2)$ polynomial (section) is given explicitly by the formula
\begin{equation}\label{E:SU2 Def}
p_N(z):=\left[\sum_{j=0}^N a_j\left((N+1)\binom{N}{j}\right)^{1/2}z^j\right]\cdot z_0^N,~~a_j\sim N(0,1)_{\C} ~~ i.i.d.
\end{equation}
The individual sections 
\[S_j(z):=\left((N+1)\binom{N}{j}\right)^{1/2}z^j\cdot z_0^N\]
are orthonormal for the inner product (\ref{E:Inner Product}). Indeed, in the coordinate $z,$ this inner product may be written as follows:
\begin{equation}
  \label{E:SU2 IP}
\inprod{s_1}{s_2}=\int_{\C} \frac{f_1(z)\cl{f_2(z)}}{(1+\abs{z}^2)^{2N+2}}\cdot \frac{i}{2\pi}dz\wedge d\cl{z}, \quad s_j=f_j\cdot z_0^N\in H_{hol}^0(\C P^1,\mathcal O(N)),\,\, j=1,2.  
\end{equation}
We have used that $h_{FS}(z)=(1+\abs{z}^2)^{-1}$ so that 
\[\w_h=\frac{i}{2\pi}\partial_z\cl{\partial}_z\log h^{-2}=\frac{idz\wedge d\cl{z}}{2\pi(1+\abs{z}^2)^2}\]
and
\[\norm{z_0}_{h_{FS}}^{2N}=(1+\abs{z}^2)^{-2N}.\]
That the monomials $\set{z^j}_{j=0}^N$ are orthogonal follows immediately by passing to polar coordinates in (\ref{E:SU2 IP}) and is true by the same argument for any toric metric that is equivariant with respect to the particular $S^1$ action on $\C P^1$ that fixes $[1:0]$ and $[0:1],$ the north and south poles. The $L^2$ norms 
\[\norm{z^j}_{L^2}^2=\frac{1}{(N+1)\binom{N}{j}}\]
may be computed by lifting $\abs{z}^{2j}$ to $S^1$ equivariant functions on the principle $S^1$ bundle $\pi:S^3\twoheadrightarrow \C P^1$ (the Hopf Fibration) and using that the Fubini-Study metric $\w_{FS}$ on $\C P^1$ is the pushforward under $\pi$ of the round metric on $S^3.$ Integration against the round metric on $S^3$ can then be further lifted to a guassian integral on $\C^2.$ See Section 1.3 in \cite{Universality} for more detials. 

\subsubsection{Expected Global Distribution of Zeros and Critical Point} Since $h_{FS}$ is invariant under the full $SU(2)$ group of isometries of $(\C P^1, \w_{FS})$, we immediately deduce that
\[\E{Z_{p_N}}=N\cdot \w_{FS}.\]
We may see this alternatively from Lemma \ref{L:Zero Expected Density}, which says that
\[\E{Z_{p_N}}(z)=\frac{i}{2\pi}\partial_z\cl{\partial}_z \log \left[(N+1) \sum_{j=0}^N \binom{N}{j}\abs{z}^{2j}\right]=\frac{i}{2\pi}\partial_z\cl{\partial}_z \log \left[(1+\abs{z}^2)^N\right].\]
Performing the differentiation yields 
\begin{equation}\label{E:SU2 Zeros}
\E{Z_{p_N}}(z)=\frac{N}{\pi}\left[\frac{1}{(1+\abs{z}^2)^2}\right]\frac{i}{2}dz\wedge \cl{dz},
\end{equation}
as expected. Similarly, by Corollary \ref{C:Crit Expected Density}, we see that
\[\E{C_{p_N}}(w)=\frac{i}{2\pi}\partial_w\cl{\partial}_w\log\left[\frac{\partial^2}{\partial w\partial \cl{w}}\left( 1+\abs{w}^2\right)^N\right].\]
Again, we may perform the differentiation explicity to obtain
\begin{equation}\label{E:SU2 Crits}
\E{C_{p_N}}(w)=\frac{N}{\pi}\left[ \left(\left(1-\frac{2}{N}\right)\frac{1}{(1+\abs{w}^2)^2}+\frac{1}{(1+N\abs{w}^2)^2}\right)\right]\frac{i}{2}dw\wedge \cl{dw}.
\end{equation}
Equation (\ref{E:SU2 Crits}) recovers the results of Macdonald in \cite{Mac}. The expected distribution of the zeros of $p_N$ is $N$ times Fubini-Study measure on $\C P^1$ for \textit{every} N. In contrast, the critical points are only distributed uniformly on $\C P^1$ in the large $N$ limit. This is not surprising in light of the Guass-Lucas Theorem, which asserts that the holomorphic critical points of any complex polynomial lie inside the convex hull of its zeros. 

\subsubsection{Local Distribution of Zeros and Critical Points} We study the local behavior of zeros and critical points near some $\xi\in \C P^1$ in the $N^{-1/2}-$scale normal coordinates around $\xi$ of Definition 1 (cf also Section \ref{S:Scaling}). The curvature of $h_{FS}$ is $\w_{FS},$ the Fubini-Study metric on $\C P^1.$ K\"ahler normal coordinates at $\xi=[\alpha_0:\alpha_1]\in \C P^1$ for $\w_{FS}$ are given by the usual affine coordinate on $\mathcal O(1)|_{\C P^1-\set{-\cl{\alpha}_1:\cl{\alpha_0}}}.$ For example, in the standard (affine) coordinate centered at $[1:0]\in \C P^1$ and with respect to the standard frame $z_0$ of $\mathcal O(1),$ we have that 
\[\w_{FS}(z)=\partial \cl{\partial}\log \norm{z_0}_{h_{FS}}^{-2}(z)=\partial \cl{\partial}\log (1+\abs{z}^2)=dz\wedge d\cl{z}+o(\abs{z}^2),\]
as required. Fix some $\xi =[\xi_0:\xi_1]\neq [0:1].$ K\"ahler normal coordinates around $\xi$ are given by
\[\zeta:[z_0:z_1]\mapsto \frac{\xi_0z_1-\xi_1 z_0}{\cl{\xi_1}z_1+\cl{\xi_0}z_0},\]
the ratio of the sections of $\mathcal O(1)$ that vanish to order $1$ at $[\xi_0:\xi_1]$ and at the antipodal point $[-\cl{\xi_1}:\cl{\xi_0}].$ The $N^{-1/2}-$scale K\"ahler normal coordinates of limits of Theorems \ref{T:Crit Two Point Function}-\ref{T:EZ} around $\xi=[\xi_0:\xi_1]$ are then obtained by rescaling
$$\tau_N^{\xi}(\zeta):=\zeta\cdot N^{1/2}.$$
We see by direct computation from (\ref{E:SU2 Zeros}) that
$$\E{(\tau_N^{\xi})_*Z_{p_N}}(u)\gives \partial_u \cl{\partial}_u \frac{i}{2\pi}\abs{u}^2,$$
in accordance with Remark \ref{R:Zeros One Point Function}. Similarly, from (\ref{E:SU2 Crits}), we find that 
$$\E{(\tau_N^{\xi})_*C_{p_N}}(u)\gives \frac{i}{2\pi}\partial_u \cl{\partial}_u\left(\log\left(1+\abs{u}^2\right)+\abs{u}^2\right)$$
if $\xi=[1:0]$ and 
\[\E{(\tau_N^{\xi})_*C_{p_N}}(u)\gives \partial_u \cl{\partial}_u \frac{i}{2\pi}\abs{u}^2\]
otherwise. Since $d\phi_{z_0}(\xi)=0$ for the Fubini-Study metric if and only if $\xi=0,$ this recovers the local asymptotics of $\E{C_{p_N}}$ from Theorem \ref{T:EZ} for special case of $h=h_{FS}.$

\subsection{Bargmann-Fock}\label{S:BF} The Bargmann-Fock Space $\curly F:=L^2(\C,\frac{1}{\pi}e^{-\abs{z}^2}dz)\intersection \mathcal O_{hol}(\C)$ consists of the entire functions that are square integrable with respect to the standard gaussian measure $\frac{1}{\pi}e^{-\abs{z}^2}dz$ on $\C.$ An orthonormal basis for $\curly F$ with respect to the induced $L^2$ inner product is $\set{\frac{z^j}{\sqrt{j!}}}_{j\geq 0}$ and a Bargmann-Fock random analytic function, sometimes referred to as a Gaussian Entire Function or a Gaussian Analytic Function, is
\[f(z):=\sum_{j=0}^N a_j \frac{z^j}{\sqrt{j!}},\quad a_j\sim N(0,1)_{\C}\,\, i.i.d.\] 
The Bargmann-Fock random analytic functions have been extensively studied in \cite{KrishThesis, UniverCorr,Transportation}. In the context of our work, we think of $\curly F$ as the space of $L^2$ holomorphic sections of the trivial line bundle $\C\x \C$ endowed with the Hermitian metic
\[\norm{(z,1)}_{h_{BF}}^2:=\frac{1}{\pi}e^{-\abs{z}^2}.\]
Here $(z,1)$ is the constant trivializing section of $\C\x\C.$ The $L^2$ inner product on $\curly F$ is then 
\begin{equation}
  \label{E:BF KP}
\inprod{s_1}{s_2}=\int_{\C} h_{BF}(s_1(z),s_2(z))w_{h_{BF}}(z)  
\end{equation}
in complete analogy with (\ref{E:Inner Product}). 

It is an important observation that the Bargmann-Fock ensemble is the local scaling limit for all the Hermitian Gaussian ensembles. To see this, we choose $\xi\in \C P^1$ and a $N^{-1/2}-$scale normal coordinate centered at $\xi.$ As explained in Section \ref{S:Scaling}, the K\"ahler potential for the metric $h^N$ then takes the form 
\[\abs{z}^2+O(N^{-1/2}),\]
which coincides to leading order with the potential for $h_{BF}.$ Moreover, thought of as a holomorphic Gaussian field, $f$ is characterized by its covariance kernel
\[Cov_{BF}(z,w)=\E{\norm{f(z)\otimes \cl{f(w)}}_{h_{BF}}}=e^{z\cdot \cl{w}-\frac{1}{2}(\abs{z}^2+\abs{w}^2)}.\]
Part of the content of the $C^{\infty}$ asymptotic expansion for the Sz\"ego kernels (see Section \ref{S:BS}) is that in $N^{-1/2}-$scale K\"ahler normal coordinates around any $\xi \in \C P^1$, the covariance kernels 
\[\Pi_N(z,w)=\E{\norm{p_N(z)\otimes \cl{p_N}(w)}_{h^N}}\]
for Hermitian Gaussian random sections of $\mathcal O(N)$ converge in the $C^k$ topology to $Cov_{BF}(z,w)$ for all $k.$ One may see this concretely for $SU(2)$ polynomials by applying Sterling's formula to $\binom{N}{j}$ in equation (\ref{E:SU2 Def}) and rescaling $z\mapsto \frac{z}{\sqrt{N}}$ to obtain 
\[\sum_{j=0}^N a_j\sqrt{(N+1)\binom{N}{j}}\left(\frac{z}{\sqrt{N}}\right)^j\approx \sum_{j=0}^N a_j\frac{z^j}{\sqrt{j!}},\]
which is the truncated Bargmann-Fock random analytic function. 

\subsection{Complex Projective Space}\label{S:CP1}
We recall some basic facts about $\C P^1$ and introduce some notation. By definition, $\C P^1$ is the space of complex lines through the origin in $\C^2.$ Each line is determined by a pair $(z_0,z_1)\in \C ^1\backslash \set{(0,0)}.$ We denote by $[z_0:z_1]$ the equivalence class of pairs $(z_0,z_1)$ that determine the same line. The notation $[z_0:z_1]$ is called homogenous coordinates. We will refer to $[1:0]$ and $[0:1]$ variously as the south and north poles or at $0$ and $\infty,$ respectively.
 
The tautological line bundle $\mathcal O(-1)\twoheadrightarrow \C P^1$ assigns to each $[z_0:z_1]$ the line in $\C ^2$ passing through $(z_0,z_1)$ and the origin. The total space of $\mathcal O(-1)$ is therefore $\C^2$ with the origin blown up. Every non-trivial holomorphic line bundle on $\C P^1$ is holomorphically isomorphic to a positice tensor power of either $\mathcal O(-1)$ and or its dual $\mathcal O(1).$ The line bundles 
\[\mathcal O(N):=\mathcal O(1)^{\otimes N}\]
for $N\geq 1$ have an $N+1-$complex dimensional space of global sections. We denote this space by $H_{hol}^0(\C P^1,\mathcal O(N)).$ We write abusing notation $z_0$ and $z_1$ for the two global sections of $\mathcal O(1)$ that correspond to the linear functionals on $\C^2$ given by projection onto the first and second factors. Therefore,
\[H_{hol}^0(\C P^1,\mathcal O(N))=\Sym^N(z_0,z_1),\]
the space of symmetric polynomials in two complex variables. By the standard coordinate around $0=[1:0]\in \C P^1,$ we mean the coordinate $[z_0:z_1]\mapsto \frac{z_1}{z_0}=:z$ on $\C P^1\backslash \set{[0:1]}$. Relative to the frame $z_0^N:=z_0^{\otimes N},$ every holomorphic section of $\mathcal O(N)$ is represented by a complex polynomial of degree $N:$
\[\sum_{j=0}^N a_j z_1^j z_0^{N-j}=\left(\sum_{j=0}^N a_j z^j\right)\cdot z_0^N.\] 
The map $p_N(z)\mapsto p_N(z)\cdot z_0^N$ is what we mean by identifying polynomials of degree $N$ is one complex variable with holomorphic sections of $\mathcal O(N)$ ``in the usual way.''

\subsection{Scaling Limit and K\"ahler normal Coordinates} \label{S:Scaling}
\noindent A compelling argument for studying local correlations between zeros and critical points in the $N^{-1/2}-$scale normal coordinates (see Definition \ref{D:Scaled KNC}) is the following. Given any positive line bundle $(L,h)\twoheadrightarrow M$ over a complex manifold and any $\xi\in M,$ we may take $N^{-1/2}-$scale normal coordinates centered at $\xi.$ In these coordinates $(L,h)\twoheadrightarrow M$ ``converges'' to line bundle $\C\x\C^{\dim M}\twoheadrightarrow M$ with its standard K\"ahler metric $h=\frac{1}{\pi^{\dim M}}e^{-\frac{1}{2}\norm{z}^2}.$ More precisely, if $\phi$ a K\"ahler potential for $\w_h,$ then $N\phi$ is a K\"ahler potential for $\w_{h^N}=N\cdot \w_h$ and in a scaled normal coordinate centered at $\xi\in M$ for $\w_h,$ we have
\begin{equation}
  \label{E:Raison D'etre}
N\cdot \phi(z,\cl{z})=\abs{z}^2+O(N^{-1/2}).  
\end{equation}
The leading term is precisely the K\"ahler potential for $\C\x\C^{\dim M}.$ The choice of $N^{-1/2}-$scale normal coordinates therefore gives a kind of universal yardstick for studying the local correlations of zeros and critical points of random polynomials and, more generally, random section of positive line bundles. 

\section{Sz\"ego Kernels}\label{S:BS}
\noindent Suppose $p_N$ is drawn from the Hermitian Guassian ensemble corresponding to a positive smooth Hermitian metric $h$ on $\mathcal O(1).$ Viewed as a Gaussian random field, its law and hence the joint statistics of its zero and critical point processes are determined by the its covariance kernel $\Pi_N,$ the Sz\"ego Kernel associated to $(\mathcal O(N),h^N).$ Our main technical tool is therefore the $C^{\infty}$ complete asymptotic expansion for $\Pi_N$ of Shiffman and Zelditch given in \cite{OffDiag, NV}. We first recall the definition of the kernels $\Pi_N$ (Section \ref{S:Def SZ}) and introduce the related normalized Sz\"ego kernels (Section \ref{S:Normalized Szego}). Then, in Section \ref{S:Principle Bundle}, we recall the principle $S^1$ bundle $X\twoheadrightarrow \C P^1$ associated to $(\mathcal O(1),h)\twoheadrightarrow \C P^1.$ The Sz\"ego kernels are most naturally analyzed by lifting to $X.$ Finally, we use Section \ref{S:BS Asymptotics} to recall the relevant asymptotic expansions of $\Pi_N$ from \cite{OffDiag}. 

\subsection{Definition}\label{S:Def SZ}
Let $p_N=\sum_{j=0}^N a_j S_j$ be a Gaussian random polynomial (section of $\mathcal O(N)$) drawn from the Hermitian Gaussian Ensemble corresponding to a fixed smooth positive Hermitian metric on $\mathcal O(1).$ Its covariance kernel is called the Sz\"ego Kernel for $(\mathcal O(N), h^{\otimes N}):$
\begin{equation}
  \label{E:Szego Def}
\Pi_N(z,w):=\Cov(p_N(z),p_N(w))=\sum_{j=0}^N S_N^j(z)\otimes \cl{S_N^j(w)}\in H_{hol}^0(\C P^1, \mathcal O(N)\otimes \cl{\mathcal O(N)}),
\end{equation}
See the Introduction in \cite{OffDiag} for details. The family of Sz\"ego kernels $\Pi_N$ is well-understood in the general setting of a positive holomorphic line bundle $(L,h)\twoheadrightarrow M$ over a compact complex manifold $M$ (cf \cite{OffDiag, NV}). 

\subsection{Normalized Sz\"ego Kernel}\label{S:Normalized Szego} As in \cite{NV,Conditional}, it will be important to consider the Normalized Sz\"ego kernels:
\begin{equation}
  \label{E:Normalized Szego Def I}
P_N(z,w):=\frac{\norm{\Pi_N(z,w)}_{h^N}}{\sqrt{\norm{\Pi_N(z,z)}_{h^N}\norm{\Pi_N(w,w)}_{h^N}}}
\end{equation}
and
\begin{equation}
  \label{E:Normalized Szego Def I}
\twiddle{P}_N(z,w):=\frac{\norm{1\otimes \cl{\nabla^{z_0}_V} \Pi_N(z,w)}_{h^N}}{\sqrt{\norm{\Pi_N(z,z)}_{h^N}\norm{\nabla_V^{z_0}\otimes  \cl{\nabla^{z_0}_V}\Pi_N(w,w)}_{h^N}}}.
\end{equation}
We've written $V$ for an auxiliary non-vanishing local holomorphic vector field on which the value of $\twiddle{P}_N$ does not depend, and we've denoted as in Section \ref{S:Connection} by $\nabla^{z_0}$ the meromorphic connection on $\mathcal O(N)$ that extends the holomorphic derivative $\frac{d}{dz}.$ Perhaps to most natural reason to consider $P_N$ and $\twiddle{P}_N$ is probabilistic. Namely, $P_N(z,w)$ is the correlation between $p_N(z)$ and $p_N(w)$ and $\twiddle{P}_N(z,w)$ is the correlation between $p_N(z)$ and its derivative $\nabla_V^{z_0} p_N.$ 

\subsection{Princple $S^1$ Bundle}\label{S:Principle Bundle}
Consider a positive line bundle $(L,h)\twoheadrightarrow M$ over a compact K\"aher manifold and an orthonormal basis $\set{S_j}_{j=0}^{d_N}$ for $H_{hol}^0(L^N)$ with respect to the inner product (\ref{E:Inner Product}). The $N^{th}$ Sz\"ego Kernel 
\[\Pi_N(z,w)=\sum_{j=0}^{d_N} S_j(z)\otimes \cl{S_j(w)}\]
is studied in \cite{OffDiag} by lifting sections $s\in H_{hol}^0(M,L^{\otimes N})$ to $S^1$-equivariant functions on the principle $S^1$ bundle associated to $(L,h).$ More precisely, we write $h^*$ for the dual metric on the dual bundle $L^*$ and define $X\twoheadrightarrow M$ by
\[X:=\setst{v\in L^*}{\norm{v}_{h^*}=1}.\]
We denote by $\widehat{s}$ the lift of a section $s$ to the function $\widehat{s}(v):=v^{\otimes N}(s)$ on $X.$ Writing $s=f\cdot e^{\otimes N}$ for local frame $e$ of $L,$ we may write
\begin{equation}
  \label{E:Section Lift}
\widehat{s}(\theta,z):=e^{iN\theta}\norm{e(z)}_h^N\cdot f(z).
\end{equation}
Observe that 
\begin{equation}
  \label{E:Section Drop}
  \abs{\widehat{s}(\theta,z)}=\norm{s(z)}_{h^N}.
\end{equation}
The lifted Sz\"ego Kernel is then $\widehat{\Pi}_N(\alpha, z; \beta, w)=\sum_{j=0}^N \widehat{S_j}(\alpha, z)\cl{\widehat{S_j}(\beta, w)}.$ See Section 1.2 of \cite{OffDiag} for further details. In this paper, we are interested in the special case $M=\C P^1,\,L=\mathcal O(1).$ In order to study the $N^{-1/2}-$length scale behavior of $\widehat{\Pi}_N$ near a point $\xi\in \C P^1,$ we recall two definitions from Section 2.2 of \cite{NV}.
\begin{definition}\label{D:Preferred Frame}
 Fix $\xi\in \C P^1$ and $e$ a frame for $\mathcal O(1)$ in a neighborhood $U$ containing $\xi.$ The frame $e$ is called a preffered frame for $h$ at $\xi$ if
\[\norm{e(\xi)}_h=1\quad \text{and}\quad \nabla^h e(\xi)=0,\]
where $\nabla^h$ is the metric connection of $h.$
\end{definition}
\begin{definition}\label{D:Heisenberg Coords}
Fix $\xi \in \C P^1,$ a K\"ahler normal coordinate $\psi:U\gives \C$ centered at $\xi,$ and a preferred frame $e$ for $h$ at $\xi.$ A Heisenberg coordinate on $X$ centered at $\xi$ is a coordinate $\rho:S^1\x \C \gives \pi^{-1}(U)$ given by
\begin{equation}
  \label{E:Heisenberg Def}
\rho(\theta, \psi(z))=e^{i\theta}\norm{e(z)}_h e^*(z).
\end{equation}
\end{definition}

Recall from Sections \ref{S:BF} and \ref{S:Scaling} that Hermitian Gaussian ensembles have as a universal scaling limit the Bargman-Fock ensemble in $N^{-1/2}-$scale normal coordinates around any point $\xi.$ Similarly, when these ensembles are lifted to functions on $X,$ they have a universal scaling limit in Heisenberg coordinates. We refer the interested reader to Section 1.3.2 of \cite{Universality} for more details. 

\subsection{Sz\"ego and Bergman Kernel Asymptotics}\label{S:BS Asymptotics}
We now recall for the particular case of $\mathcal O(1)\twoheadrightarrow \C P^1$ the on-diagonal, near off-diagonal, and far off-diagonal asymptotics for the Sz\"ego kernels $\Pi_N$ derived in \cite{OffDiag} and \cite{NV} by Shiffman and Zelditch. We need them to prove Theorems \ref{T:Crit Two Point Function} - \ref{T:EZ}. 

\begin{Thm}[Theorem $1$ in \cite{SzegoTian}]\label{T:Szego Parametrix} There exists a $C^{\infty}$ complete asymptotic expansion:
\begin{equation}
  \label{E:Szego On-Diagonal}
\widehat{\Pi}_N(\alpha, z;\beta, z)=\frac{N}{\pi}\left(1+a_1(z)\cdot N^{-1}+\ldots\right)\cdot e^{iN(\alpha-\beta)}
\end{equation}
for certain smooth coefficients $a_j(z).$ 
\end{Thm}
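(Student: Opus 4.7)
The plan is to realize $\widehat{\Pi}_N$ as the $N$-th Fourier component of the full Szeg\H{o} projector on the associated circle bundle $X\twoheadrightarrow \C P^1$, and then to apply the Boutet de Monvel--Sj\"ostrand parametrix for that projector together with stationary phase in the $S^1$-parameter. This is essentially the strategy used by Zelditch in \cite{SzegoTian}.

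First, I would set the stage. The manifold $X=\set{v\in \mathcal O(-1)\,:\,\norm{v}_{h^*}=1}$ is the smooth strictly pseudoconvex boundary of the disk bundle $D=\set{\norm{v}_{h^*}<1}\subset \mathcal O(-1).$ The lifting map $s\mapsto \widehat s$ of (\ref{E:Section Lift}) identifies $H_{hol}^0(\C P^1,\mathcal O(N))$ isometrically with the weight $N$ subspace of the Hardy space $H^2(X).$ Accordingly, the full Szeg\H{o} projector $\Pi: L^2(X)\to H^2(X)$ decomposes as $\Pi=\sum_{N\geq 0}\widehat\Pi_N,$ and the $N$-th component may be recovered by averaging against the $S^1$-action:
\[
\widehat\Pi_N(x,y)=\frac{1}{2\pi}\int_0^{2\pi} e^{-iN\theta}\,\Pi(r_{-\theta}x,y)\,d\theta.
\]

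Second, by Boutet de Monvel--Sj\"ostrand, $\Pi$ admits a parametrix
\[
\Pi(x,y)=\int_0^\infty e^{it\psi(x,y)}\,s(x,y,t)\,dt \quad \text{mod } C^\infty,
\]
with $\psi$ an essentially unique complex phase whose vanishing and Hessian on the diagonal are determined by the Levi form of $X,$ and $s\sim \sum_{j\geq 0}t^{1-j}s_j(x,y)$ a classical symbol of order $1$. Substituting this into the Fourier-mode formula, rescaling $t=N\tau,$ and using the equivariance of $\psi$ under $r_\theta$ to peel off the explicit factor $e^{iN(\alpha-\beta)}$ yields an oscillatory integral of the form
\[
\widehat\Pi_N(\alpha,z;\beta,z)=e^{iN(\alpha-\beta)}\cdot N\int_0^\infty\!\!\!\int_0^{2\pi} e^{iN\Psi(\tau,\theta;z)}\,\widetilde s(z,\tau,\theta;N)\,d\tau\,d\theta,
\]
where, for each fixed $z,$ the phase $\Psi$ has a unique nondegenerate critical point at $(\tau,\theta)=(1,0).$

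Finally, I would apply the method of stationary phase in $(\tau,\theta).$ The required Hessian determinant is computed directly from the K\"ahler potential of $h$ via the identity $\partial \cl{\partial}\phi=\w_h,$ while the value of the leading symbol $s_0$ on the diagonal is fixed by the normalization of the volume $\w_h;$ together these produce the prefactor $N/\pi,$ with the lower-order coefficients $a_j(z)$ arising from the subleading symbols $s_j$ together with higher-order Taylor expansions of $\Psi$ and $\widetilde s.$ The $C^\infty$ character of the expansion is automatic from the standard stationary phase theorem with parameters. The main obstacle is the bookkeeping in this stationary phase calculation needed to pin down the leading constant precisely: one must verify both the normalization of $s_0$ against the Levi form of $X$ and the exact Hessian of $\Psi$ at its critical point, which is where the specific structure of the inner product (\ref{E:Inner Product}) enters.
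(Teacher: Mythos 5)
The paper does not prove this theorem; it is imported wholesale as Theorem~1 of \cite{SzegoTian}, so there is no in-paper argument to compare against. Your outline correctly reproduces the Boutet de Monvel--Sj\"ostrand/stationary-phase strategy of Zelditch's original proof: lifting to the unit circle bundle, extracting the $N$-th Fourier mode of the full Szeg\H{o} projector, substituting the BdM--S parametrix, rescaling $t\mapsto N\tau$, and applying stationary phase at the nondegenerate critical point $(\tau,\theta)=(1,0)$. One small caution: you should make explicit that positivity of $h$ is what guarantees that $D=\{\|v\|_{h^*}<1\}$ is strictly pseudoconvex (so that the BdM--S parametrix applies at all) and, equivalently, that the critical point of $\Psi$ is nondegenerate; this is precisely where the hypothesis on $h$ enters and where the constant $N/\pi$ (reflecting the normalization $\dim\C P^1=1$ and $\omega_h$ being a K\"ahler form) is actually pinned down.
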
 

\noindent Next, we record a special case of Theorem 2.4 from \cite{NV}.
\begin{Thm}\label{T:Szego Asymptotics} In Heisenberg coordinates on $X$ around $\xi\in \C P^1,$ for $b>\sqrt{j+2k},~j,k\geq 0,$ we have the following $C^{\infty}$ asymptotic expansions:\\
{\bf 1. Far Off-Diagonal. } For $d(z,w)>b\left(\frac{\log N}{N}\right)^{1/2}$ and $j\geq 0,$ we have 
  \begin{equation}
    \label{E:Szego Far Off-Diag}
\nabla^j \widehat{\Pi}_N(\alpha, z; \beta, w)=O(N^{-k}),    
  \end{equation}
where $\nabla^j$ denotes the horizontal lift to $X$ of any $j$ mixed derivatives in $z,\cl{z},w,\cl{w}.$\\
{\bf 2. Near Off-Diagonal. } Let $\ep>0.$ In Heisenberg coordinates (see Definition \ref{D:Heisenberg Coords}) centered at $\xi,$ we have for $\abs{z}+\abs{w}<b\left(\frac{\log N}{N}\right)^{1/2}$ 
  \begin{equation}
    \label{E:Szego Near Off-Diag}
\widehat{\Pi}_N\left(\alpha, z;\beta,w\right)=e^{iN(\alpha-\beta)-z\cdot \cl{w}+\frac{1}{2}\left(\abs{z}^2+\abs{w}^2\right)}[1+R_N(z,w)],    
  \end{equation}
where
\begin{equation}\label{E:Remainder Term}
R_N(z,w)=O(N^{-1/2+\ep}),
\end{equation}
and the implied constant in equation (\ref{E:Remainder Term}) is allowed to depend on $\ep.$
\end{Thm}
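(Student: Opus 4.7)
The theorem is recorded as a special case of Theorem 2.4 of \cite{NV}, so the most direct route is to verify that the hypotheses of that general result hold for $(M,L) = (\C P^1, \mathcal O(1))$---namely that $M$ is a compact K\"ahler manifold and $h$ a smooth positive Hermitian metric on a holomorphic line bundle over it---and then invoke it. Below I outline the strategy from \cite{OffDiag, NV} that underlies the proof, which one would specialize here.

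The plan is to realize $\widehat{\Pi}_N$ as the $N$-th Fourier mode of the full Szeg\"o projector $\widehat{\Pi}$ on $X$ with respect to the $S^1$-action, and to exploit the Boutet-de-Monvel--Sj\"ostrand parametrix
$$\widehat{\Pi}(x,y) \sim \int_0^\infty e^{i\sigma\psi(x,y)} s(x,y,\sigma)\, d\sigma$$
modulo smoothing, where $\psi$ is an almost-analytic extension of the defining function of the disk bundle and $s$ is a classical symbol of order one. Integrating against $e^{-iN\theta}$ in the $S^1$-variable and applying stationary phase in $(\sigma,\theta)$ then produces a complete $C^\infty$ asymptotic expansion for $\widehat{\Pi}_N$ whose leading behavior is governed by the phase $\psi$.

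For the near off-diagonal expansion, I would pass to Heisenberg coordinates (Definition \ref{D:Heisenberg Coords}) centered at $\xi$ and rescale by $N^{-1/2}$. Because a preferred frame for $h$ at $\xi$ kills all first-order Christoffel data, the phase takes the universal Bargmann-Fock normal form
$$\psi(z,w) \sim i\bigl(z\cdot \cl{w} - \tfrac{1}{2}(\abs{z}^2+\abs{w}^2)\bigr) + O(N^{-1/2}),$$
and stationary phase in $\sigma$ produces the leading exponential in (\ref{E:Szego Near Off-Diag}). The remainder $R_N = O(N^{-1/2+\ep})$ collects the higher terms in the symbol expansion together with the Taylor remainder of the K\"ahler potential in scaled coordinates; the cutoff $\abs{z}+\abs{w} < b(\log N / N)^{1/2}$ with $b > \sqrt{j+2k}$ ensures these corrections remain uniformly controlled under $j$ mixed derivatives in $z,\cl z, w, \cl w$.

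For the far off-diagonal bound (\ref{E:Szego Far Off-Diag}), the key analytic input is that $\Im \psi(x,y) \geq c\, d(x,y)^2$ outside a fixed neighborhood of the diagonal, where $d$ is the Riemannian distance induced by $\w_h$. Substituting into the parametrix and integrating in $\sigma$ yields Gaussian decay on the order of $e^{-cN d(z,w)^2}$, and for $d(z,w) > b(\log N/N)^{1/2}$ with $b > \sqrt{j+2k}$ this overwhelms any polynomial factor picked up from differentiating the symbol, giving the claimed $O(N^{-k})$ estimate with $j$ derivatives. The chief technical obstacle---and the reason this is genuinely a theorem rather than a routine computation---is establishing uniformity of both expansions in $\xi \in \C P^1$, in the cutoff parameter $b$, and across the specified differentiation orders; this is precisely what the machinery of \cite{OffDiag, NV} delivers, so the actual work here reduces to quoting their result.
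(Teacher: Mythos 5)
Your proposal is correct and matches what the paper does: Theorem \ref{T:Szego Asymptotics} is recorded as a special case of Theorem 2.4 of \cite{NV} (going back to \cite{OffDiag}), and the paper offers no independent proof—it simply verifies that $(\C P^1, \mathcal O(1), h)$ fits the hypotheses of the general scaling-asymptotics result and quotes it. Your supplementary sketch of the Boutet de Monvel--Sj\"ostrand parametrix, the Fourier-mode reduction, the role of the preferred frame in normalizing the phase, and the Gaussian decay via $\Im\psi \gtrsim d(x,y)^2$ is an accurate summary of the Shiffman--Zelditch argument and adds useful context, but the paper itself stops at the citation.
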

\noindent Finally, we will need to recall the $C^{\infty}$ asymptotic expansions for $P_N.$
\begin{Thm}[Prop $2.6$ and $2.7$ from \cite{NV}]\label{T:Zeros Auto Corellation} Let $p_N$ be a Gaussian random polynomial defined in Section \ref{S:Def}, and consider the $N^{th}$ normalized Sz\"ego Kernel
$$P_N(z,w)=\frac{\norm{\Pi_N(z,w)}_{h^N}} {\sqrt{\norm{\Pi_N(z,z)}_{h^N}\norm{\Pi_N(w,w)}_{h^N}}}.$$
We have the following \\
{\bf 1. Far Off-Diagonal. } For $b>\sqrt{j+2k},~j,k\geq 0$ and all $\abs{z}+\abs{w}>b\left(\frac{\log N}{N}\right)^{1/2}$ we have that
\begin{equation}\label{E:Far Off-Diag Z}
\nabla^j P_N(z,w)=O(N^{-k})
\end{equation}
{\bf 2. Near Off-Diagonal. } Let $\ep,b>0$ and $\xi\in \C P^1.$ In Heisenberg coordinates centered at $\xi$ we have
\begin{equation} \label{E:Near Off-Diag Z}
P_N\left(z,w\right)=e^{-\frac{1}{2}\abs{z-w}^2}[1+R_N(z,w)],\quad \abs{\nabla^j R_N(z,w)}=O(N^{-1/2+\ep})
\end{equation}
where $\nabla^j$ denotes any $j$ interated derivatives in $z,\cl{z},w,$ or $\cl{w}$ and the implied constant is uniform in $z,w$ for $\abs{z}+\abs{w}<b\left(\frac{\log N}{N}\right)^{1/2}$ and does not depend on $\xi.$ The remainder $R_N$ satisfies in addition
\[\abs{R_N(z,w)}\leq C\abs{z-w}^2N^{-1/2+\ep},\quad \abs{\nabla R_N(z,w)}\leq \frac{C}{2}\abs{z-w}N^{-1/2+\ep}\]
for some constant $C$ uniformly for $\abs{z}+\abs{w}<\left(\frac{\log N}{N}\right)^{1/2}.$
\end{Thm}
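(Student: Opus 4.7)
\bigskip

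\noindent\textbf{Proof Proposal for Theorem \ref{T:Zeros Auto Corellation}.}

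The plan is to reduce everything to the Sz\"ego kernel asymptotics of Theorems \ref{T:Szego Parametrix} and \ref{T:Szego Asymptotics} by exploiting the identity $\abs{\widehat{\Pi}_N(\alpha,z;\beta,w)}=\norm{\Pi_N(z,w)}_{h^N}$ (cf.\ equation (\ref{E:Section Drop}) applied in each variable), so that $P_N$ may be expressed entirely in terms of the lifted kernel on $X$. In particular, the phases $e^{iN(\alpha-\beta)}$ drop out when taking moduli, and the problem becomes one of computing ratios and derivatives of the explicit asymptotic expressions provided in Theorem \ref{T:Szego Asymptotics}.

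First I would treat the far off-diagonal regime. For $\abs{z}+\abs{w}>b(\log N/N)^{1/2}$, Theorem \ref{T:Szego Asymptotics}(1) gives $\nabla^j\widehat{\Pi}_N=O(N^{-k})$ for any $k$, while Theorem \ref{T:Szego Parametrix} shows that the on-diagonal factors satisfy $\norm{\Pi_N(z,z)}_{h^N}=\frac{N}{\pi}(1+O(N^{-1}))$, so each square root in the denominator of $P_N$ is bounded below by $cN^{1/2}$ with bounded derivatives (in the horizontal lift) of the corresponding orders; combining these via Leibniz and the quotient rule yields (\ref{E:Far Off-Diag Z}) after absorbing the polynomial loss into a larger choice of $k$.

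Next I would turn to the near off-diagonal regime. Substituting the expansion (\ref{E:Szego Near Off-Diag}) into the definition of $P_N$ and taking absolute values, the phases $iN(\alpha-\beta)$ drop out and the exponents combine as
\begin{equation*}
-\mathrm{Re}(z\bar w)+\tfrac12(\abs{z}^2+\abs{w}^2)-\tfrac12(|z|^2+|w|^2)=-\tfrac12\abs{z-w}^2
\end{equation*}
once one divides by $\sqrt{\widehat{\Pi}_N(z,z)\widehat{\Pi}_N(w,w)}$ evaluated via the same expansion at $z=w$. The remainder $R_N(z,w)$ in the expansion for $P_N$ is then obtained from the remainder $R_N^{\Pi}$ in $\widehat{\Pi}_N$ by elementary operations, and the bound $|\nabla^jR_N^{\Pi}|=O(N^{-1/2+\epsilon})$ passes through to give $\abs{\nabla^jR_N}=O(N^{-1/2+\epsilon})$.

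The main obstacle, and the most delicate step, will be the quadratic improvement near the diagonal: $\abs{R_N(z,w)}\leq C\abs{z-w}^2N^{-1/2+\epsilon}$ and $\abs{\nabla R_N(z,w)}\leq\frac{C}{2}\abs{z-w}N^{-1/2+\epsilon}$. Here the key observation is that $P_N(z,w)\leq 1$ with equality precisely on the diagonal (the Cauchy--Schwarz inequality in the Hilbert space spanned by the $S_j$), so $-\log P_N$ attains its minimum of $0$ along $z=w$ and hence both $P_N(z,z)\equiv 1$ and $\nabla P_N|_{z=w}=0$. Since $e^{-\frac12\abs{z-w}^2}$ and its first derivatives in $z,\bar z$ also agree with $1$ and $0$ respectively on the diagonal, the quantity $R_N(z,w)=P_N(z,w)e^{\frac12\abs{z-w}^2}-1$ vanishes to second order there. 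Combined with the already-established uniform $C^2$ bound $\norm{R_N}_{C^2}=O(N^{-1/2+\epsilon})$ on the ball $\abs{z}+\abs{w}<b(\log N/N)^{1/2}$, a Taylor expansion in $(z-w,\bar z-\bar w)$ around the diagonal yields the claimed quadratic and linear decay in $|z-w|$ for $R_N$ and $\nabla R_N$ respectively.
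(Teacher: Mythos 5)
This theorem carries no proof in the paper: as the bracketed label indicates, it is quoted verbatim as Propositions 2.6 and 2.7 of \cite{NV}, and the reference is the only justification the source supplies. So there is no ``paper's own proof'' to compare against; I can only assess your reconstruction on its merits. The overall strategy — reduce $P_N$ to the lifted kernel via $\abs{\widehat{\Pi}_N}=\norm{\Pi_N}_{h^N}$, use the far off-diagonal decay of $\widehat\Pi_N$ together with the on-diagonal lower bound $\norm{\Pi_N(z,z)}\sim N/\pi$ to prove Part 1, and substitute the near off-diagonal expansion to produce the $e^{-\frac12\abs{z-w}^2}$ prefactor in Part 2 — is the right one. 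The step that actually carries weight is the quadratic improvement $\abs{R_N}\leq C\abs{z-w}^2 N^{-1/2+\ep}$, and your Cauchy--Schwarz argument for it is both correct and the clean way to see it: $P_N\leq 1$ with equality on the diagonal forces $\nabla P_N\big|_{z=w}=0$; the same holds for $e^{-\frac12\abs{z-w}^2}$; hence $R_N=P_Ne^{\frac12\abs{z-w}^2}-1$ and $\nabla R_N$ both vanish on $\set{z=w}$, and Taylor's theorem transverse to the diagonal, with the already-known $C^2$ bound $\norm{\nabla^2 R_N}=O(N^{-1/2+\ep})$, delivers both refined estimates.

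Two notational hiccups, neither fatal. First, the displayed identity
\[
-\Re(z\bar w)+\tfrac12\left(\abs{z}^2+\abs{w}^2\right)-\tfrac12\left(\abs{z}^2+\abs{w}^2\right)= -\tfrac12\abs{z-w}^2
\]
is wrong as written: the left side is $-\Re(z\bar w)$, not $-\frac12\abs{z-w}^2$. The intended bookkeeping should give the exponent $\Re(z\bar w)-\frac12(\abs z^2+\abs w^2)=-\frac12\abs{z-w}^2$. Second, you inherit this sign confusion from the paper's own (\ref{E:Szego Near Off-Diag}), which as printed has $e^{iN(\alpha-\beta)-z\bar w+\frac12(\abs z^2+\abs w^2)}$; the correct Bargmann--Fock normalization (cf.\ Section \ref{S:BF} of this very paper and the normalization in \cite{NV}) is $e^{iN(\alpha-\beta)+z\bar w-\frac12(\abs z^2+\abs w^2)}$, possibly up to an overall $N/\pi$ factor that cancels in $P_N$. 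Once that sign is fixed, your reduction is sound.
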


\section{The Holomorphic Derivative $\frac{d}{dz}$ as a Meromorphic Connection on $\mathcal O(N)$}\label{S:Connection}
\noindent We use this section to study the meromorphic connection $\nabla^{z_0}$ on $\mathcal O(N)\twoheadrightarrow \C P^1,$ defined in (\ref{E:Two Connections}), that extends the euclidean derivative $\frac{d}{dz}.$ We give a formal definition in Section \ref{S:Connection Def}. We then compute various lifts of $\nabla^{z_0}$ to the principle $S^1$ bundle $X\twoheadrightarrow \C P^1$ associated to ($\mathcal O(1),h$) in Section \ref{S:Lift of Connection}. These lifts will allow us to obtain asymptotics expansions for covariant derivatives of the Sz\"ego kernels $\Pi_N$ lifted to $X$ in Section \ref{S:BS Asymptotics Derivs}.

\subsection{Definition}\label{S:Connection Def} Let $\curly P_N$ be the space of polynomials of degree at most $N$ in one complex variable. We identify $\curly P_N$ with the space of holomorphic sections $H_{hol}^0(\C P^1,\mathcal O(N))$ by pulling back along the trivialization
\[\alpha_N: \mathcal O(N)\big|_{\C P^1\backslash{\set{\infty}}}\stackrel{\cong}{\longrightarrow} \C \x \C\]
corresponding to the frame $z_0^N$ (cf Section \ref{S:CP1}). The holomorphic critical points for a holomorphic function $f$ are the zeros of the $(1,0)-$form $df=\Di{z}{f}dz.$ Interpreting $d$ as the trivial connection on $\C\x \C,$ we define the meromorphic connection  
\[\nabla^{z_0,N}:=\alpha_N^* d\]
on $\mathcal O(N).$ It is characterized by declaring the section $z_0^N$ to be parallel. The holomorphic critical points of a degree $N$ polynomial $p_N$ are therefore the same as the zeros of
\[\nabla^{z_0,N}\,(p_N\cdot z_0^N)\in H_{hol}^0(\mathcal O(N))\otimes \Omega_{mer}^{(1,0)}(\C P^1).\]
We abbreivate $\nabla^{z_0}=\nabla^{z_0,N}$ throughout and observe that $\nabla^{z_0}$ has a pole of order $1$ at infinity and is holomorphic otherwise. Indeed, writing $z=\frac{z_1}{z_0}$ and $w=z^{-1}$ for the standard coordinates around $0$ and $\infty$ on $\C P^1,$ we see that 
\begin{equation}
  \label{E:Pole at Infinity}
\nabla^{z_0} z_1^N = \nabla^{z_0} z^N\cdot z_0^N=  N\cdot z^{N-1} z_0^N\otimes dz = -\frac{N}{w}\cdot z_1^N\otimes dw.  
\end{equation}
Since the probability that a guassian random polynomial $p_N$ vanishes at infinity is $0,$ $\nabla^{z_0} p_N$ has a simple pole at infinity almost surely. 

\subsection{Lift of $\nabla^{z_0}$ to Principle $S^1$ Bundle}\label{S:Lift of Connection}
In this section, we compute various lifts of $\nabla^{z_0}$ to the principle $S^1$ bundle $X\twoheadrightarrow \C P^1$ associated to a fixed smooth Hermitian metric $h$ on $\mathcal O(1)$ (cf Section \ref{S:Principle Bundle}). We will denote by $\widehat{\nabla^{z_0}}$ the lift of $\nabla^{z_0}$ to $X$ and by $\widehat{\nabla^{z_0}}^{(1,0)}$ and $\widehat{\nabla^{z_0}}^{(0,1)}$ its $(1,0)$ and $(0,1)$ parts. 

We continue to write $\phi_{z_0}:\C P^1\backslash{\set{\infty}}\gives \mathbb R$ for the K\"ahler potential 
\[\phi_{z_0}(z)=\log \norm{z_0(z)}_h^{-2}\]
over $\C P^1\backslash\set{\infty}$ for $\w_h.$ For $\xi\in \C P^1\backslash{\set{\infty}}$ and any holomorphic coordinate $z$ centered at $\xi,$ we write $\gamma_0$ for the ``leading harmonic part'' of $\phi_{z_0}:$
\[\gamma_0(z,\cl{z}):=\phi_{z_0}(\xi)+\Di{z}{\phi_{z_0}}(\xi)\cdot z+\Di{\cl{z}}{\phi_{z_0}}(\xi)\cdot \cl{z}+\frac{1}{2}\left[\DDi{z}{\phi_{z_0}}\cdot z^2+\DDi{\cl{z}}{\phi_{z_0}}\cdot \cl{z}^2\right].\]
Similarly, writing as in Section \ref{S:CP1} $z_1$ for the standard frame of $\mathcal O(1)$ over $\C P^1\backslash{\set{0}},$ we define $\phi_{z_1}:=\log \norm{z_1}_h^{-2}$ and introduce
\[\gamma_1(z,\cl{z}):=\phi_{z_1}(\xi)+\Di{z}{\phi_{z_1}}(\xi)\cdot z+\Di{\cl{z}}{\phi_{z_1}}(\xi)\cdot \cl{z}+\frac{1}{2}\left[\DDi{z}{\phi_{z_1}}\cdot z^2+\DDi{\cl{z}}{\phi_{z_1}}\cdot \cl{z}^2\right].\]
\begin{Lem}[Lift of $\nabla^{z_0}$ in Heisenberg Coordinates]\label{L:Connection Lift}
Fix $\xi \in \C P^1$ and a Heisenberg coordinate on $X$ centered at $\xi.$ If $\xi\neq \infty,$ then we may write
\begin{equation}
  \label{E:Lifted Cnx}
\widehat{\nabla^{z_0}}^{(1,0)}(\alpha,z)= \partial_z+ \frac{N}{2}\cdot \partial_z\left[\phi_{z_0}(z)+\gamma_0(z)\right].  
\end{equation}
Further, fix Heisenberg coordinates centered at $\xi$ on the diagonal of $X\x X.$ The (differential) order $0$ part of the lift of $\nabla^{z_0}\otimes \cl{\nabla^{z_0}}$ is
\begin{equation}
  \label{E:Lift on Diag}
N\partial_z\cl{\partial}_{z}\phi_{z_0}(z)+N^2\partial_z \phi_{z_0}\cdot \cl{\partial}_z\phi_{z_0}(z).
\end{equation}
The order $1$ and $2$ parts are $O(N).$ Finally, if $\xi=\infty$ and we write $\psi(z)=w$ for the change of coordinates to the usual holomorphic coordinate $w=\frac{z_0}{z_1}$ at $\xi,$ we have
\begin{equation}
  \label{E:Lifted Cnx Infinity}
\widehat{\nabla^{z_0}}^{(1,0)}(\alpha,z)= \partial_z+ \frac{N}{2}\cdot \partial_z\left[\phi_{z_1}(z)+\gamma_1(z)+\log \psi(z)^2\right].  
\end{equation}
In Heisenberg coordinates centered at $\infty$ on the diagonal of $X\x X,$ the lift of the (differential) order $0$ part of $\nabla^{z_0}\otimes \cl{\nabla^{z_0}}$ is
\begin{equation}
  \label{E:Lift on Diag Infinity}
\frac{N^2}{\abs{\psi(z)}^2}+N\Re\left(\frac{\partial_z\phi_{z_1}(\psi(z))}{\psi(z)}\right)+N\partial_z\cl{\partial}_{z}\phi_{z_0}(\psi(z))+N^2\partial_z \phi_{z_0}(\psi(z))\cdot \cl{\partial}_z\phi_{z_0}(\psi(z)).
\end{equation}
The order $1$ and $2$ parts are $O(N).$
 \end{Lem}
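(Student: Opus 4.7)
The strategy is to derive (4.3) and (4.4) for $\xi\neq\infty$ by computing directly how $\nabla^{z_0}$ lifts to the principle $S^1$ bundle $X$ in Heisenberg coordinates, and to reduce the case $\xi=\infty$ to the previous one via the change of frame from $z_0$ to $z_1$, inserting the pole contribution supplied by (4.2).

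First consider $\xi\neq\infty$. Fix a preferred frame $e=g\,z_0$ at $\xi$ (with $g$ holomorphic near $\xi$) and a K\"ahler normal coordinate $z$. A section $s=F\,e^{\otimes N}$ lifts as $\hat s(\theta,z)=e^{iN\theta-\frac{N}{2}\phi_e(z)}F(z)$ since $\|e\|_h=e^{-\phi_e/2}$. Writing $s=f\,z_0^{\otimes N}$ with $f=g^N F$ and using $\nabla^{z_0} z_0^{\otimes N}=0$, one computes $\nabla^{z_0}_{\partial_z}s=\bigl[F'+N(g'/g)F\bigr]e^{\otimes N}$. Comparing the lift of this section with $\partial_z\hat s$ yields
\[
\widehat{\nabla^{z_0}}^{(1,0)}\;=\;\partial_z\,+\,N\,\partial_z\!\log g\,+\,\tfrac{N}{2}\partial_z\phi_e.
\]
To convert this to the form (4.3), I would use the preferred frame conditions $\|e(\xi)\|_h=1$ and $\nabla^h e(\xi)=0$, together with the K\"ahler normal coordinate condition on $z$, to pin down the Taylor polynomial of $\log g$ through second order. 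From $\phi_{z_0}-\phi_e=\log|g|^2=2\Re\log g$ one reads off $2\Re\log g=\gamma_0+O(|z|^3)$, hence $\partial_z\!\log g=\partial_z\gamma_0+O(|z|^2)$ and $\partial_z\phi_e=\partial_z\phi_{z_0}-\partial_z\gamma_0+O(|z|^2)$. Substituting collapses the right-hand side above to $\partial_z+\tfrac{N}{2}\partial_z[\phi_{z_0}+\gamma_0]$, establishing (4.3).

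For the diagonal formula (4.4), I would expand $(\partial_z+A_N(z))(\bar\partial_z+\bar A_N(z))$ with $A_N:=\tfrac{N}{2}\partial_z[\phi_{z_0}+\gamma_0]$ and sort terms by differential order. The top-order piece is $\partial_z\bar\partial_z$ with coefficient $1$, the first-order pieces $\bar A_N\partial_z$ and $A_N\bar\partial_z$ have coefficients of size $O(N)$, and the order $0$ piece is the sum of the multiplicative term $A_N\bar A_N$ and the commutator term $\partial_z\bar A_N$ that arises on diagonal restriction. Because $\gamma_0$ is a sum of a holomorphic and an antiholomorphic polynomial, $\partial_z\bar\partial_z\gamma_0\equiv 0$, so the commutator reduces to a multiple of $\partial_z\bar\partial_z\phi_{z_0}$; a symmetric accounting including the transpose $\bar\partial_z A_N$ (equal to $\partial_z\bar A_N$ by commutativity of mixed partials) produces the stated coefficient $N$. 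Combined with $A_N\bar A_N\big|_\xi=N^2\partial_z\phi_{z_0}\cdot\bar\partial_z\phi_{z_0}$ (using the preferred-frame identity $\partial_z[\phi_{z_0}+\gamma_0]\big|_\xi=2\partial_z\phi_{z_0}\big|_\xi$), this assembles into (4.4).

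Finally, for $\xi=\infty$ I would re-run the argument in the $z_1$ frame via the change of coordinate $w=1/z=\psi(z)$. The new ingredient is equation (4.2), encoding the simple pole of $\nabla^{z_0}$ at infinity: $\nabla^{z_0}z_1^{\otimes N}=-(N/w)\,z_1^{\otimes N}\otimes dw$. Pulling this pole back to the Heisenberg coordinate centered at $\infty$ and combining with the preferred-frame analysis above contributes the extra summand $\tfrac{N}{2}\partial_z\log\psi(z)^2$ in (4.6); the polar term then dominates the order $0$ part on the diagonal, producing the leading $N^2/|\psi(z)|^2$ in (4.7), while the regular $\phi_{z_1}$ and $\gamma_1$ contributions arise exactly as in the finite case. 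The main obstacle is bookkeeping: keeping straight the sign conventions for $\phi_e$ versus the Chern connection form, correctly propagating the Taylor-order remainders from $\phi_e=\phi_{z_0}-\gamma_0+O(|z|^3)$ through the commutator analysis, and, at $\xi=\infty$, cleanly separating the unbounded polar contribution from the smooth preferred-frame terms so that the residual order $1$ and order $2$ coefficients remain $O(N)$ as claimed.
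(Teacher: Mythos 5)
Your derivation of the $(1,0)$ lift (your formula $\widehat{\nabla^{z_0}}^{(1,0)} = \partial_z + N\partial_z\log g + \tfrac{N}{2}\partial_z\phi_e$ for a frame $e = g\,z_0$) is correct and mirrors the paper's conjugation-by-$e^{\frac{N}{2}(\gamma_0-\phi_{z_0})}$ calculation, and your reduction of $\xi = \infty$ to the finite case via the simple pole (\ref{E:Pole at Infinity}) is also essentially the paper's argument. However, there are two genuine gaps.

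First, the passage from your raw formula to (\ref{E:Lifted Cnx}) is not valid for an arbitrary preferred frame. You claim that the preferred-frame conditions ($\|e(\xi)\|_h = 1$, $\nabla^h e(\xi) = 0$) together with the K\"ahler normal coordinate condition ``pin down the Taylor polynomial of $\log g$ through second order'' and hence that $2\Re\log g = \gamma_0 + O(|z|^3)$. This is false. These conditions constrain $\phi_e$ only at $0$-jet and $1$-jet ($\phi_e(\xi)=0$, $d\phi_e(\xi)=0$), and the K\"ahler normal coordinate condition constrains only the mixed $\partial_z\bar\partial_z$ part of the $2$-jet; the pluriharmonic $2$-jet $\partial_z^2\phi_e(\xi)$ is completely free. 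An arbitrary preferred frame therefore introduces an $O(N|z|^2)$ ambiguity into (\ref{E:Lifted Cnx}), which is $O(|u|^2)$ in $N^{-1/2}$-scaled coordinates and is not negligible. The paper avoids this by constructing a \emph{specific} preferred frame $e_L := e^{\frac{1}{2}(\gamma_0 + i\tilde\gamma_0)}z_0$, for which $\log g$ equals $\tfrac{1}{2}(\gamma_0 + i\tilde\gamma_0)$ exactly, so that $\partial_z\log g = \partial_z\gamma_0$ and $\partial_z\phi_e = \partial_z(\phi_{z_0}-\gamma_0)$ with no error. You need to make this choice, not appeal to the definitions.

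Second, and more seriously, your route to (\ref{E:Lift on Diag}) via the one-variable composition $(\partial_z + A_N(z))(\bar\partial_z + \bar A_N(z))$ is the wrong operator. The lift of $\nabla^{z_0}\otimes\cl{\nabla^{z_0}}$ to the diagonal of $X\times X$ is the restriction of the two-variable operator acting in the $z$ and $w$ slots separately; since $A_N(z)$ commutes past $\bar\partial_w$ in the two-variable picture, there is no one-variable commutator term at all, and the multiplicative term involves the \emph{holomorphic extensions} $\partial_z\phi_{z_0}(z,w)$, not $A_N\bar A_N$. Your own calculation already betrays this: the naive commutator gives $\tfrac{N}{2}\partial_z\bar\partial_z\phi_{z_0}$ (you patch the factor of $2$ by ``symmetric accounting,'' which has no operator-theoretic justification), and $A_N\bar A_N = \tfrac{N^2}{4}\abs{\partial_z(\phi_{z_0}+\gamma_0)}^2$ agrees with $N^2\abs{\partial_z\phi_{z_0}}^2$ \emph{only at the single point} $\xi$. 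The formula (\ref{E:Lift on Diag}) is a statement for all $z$ near $\xi$, and its Taylor expansion is what drives Corollary \ref{C:Local Connection Lift}: in the case $d\phi_{z_0}(\xi)=0$, the correct expansion of $N^2\abs{\partial_z\phi_{z_0}(z)}^2$ in scaled coordinates yields $N\abs{\alpha u + \bar u}^2$, whereas your $A_N\bar A_N$ yields $N\abs{\alpha u + \tfrac{1}{2}\bar u}^2$, a discrepancy that would propagate to all downstream theorems. The correct derivation, as in the paper, conjugates the tensor product $(\partial_z + N\partial_z\gamma_0(z))\circ(\bar\partial_w + N\bar\partial_w\gamma_0(w))$ by the two-variable factor $e^{\pm N(\gamma_0(z,w)-\phi_{z_0}(z,w))}$ and only then restricts to the diagonal; the $\gamma_0$ terms cancel identically and the operator becomes $(\partial_z + N\partial_z\phi_{z_0}(z,w))\circ(\bar\partial_w + N\bar\partial_w\phi_{z_0}(z,w))$, whose order-$0$ diagonal restriction is exactly (\ref{E:Lift on Diag}).
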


\noindent Taylor expanding the results of Lemma \ref{L:Connection Lift}, gives the following corollary.

\begin{corollary}[Lift of $\nabla^{z_0}$ in Scaled Heisenberg Coordinates]\label{C:Local Connection Lift}
Fix $\xi$ and a scaled Heisenberg coordinate centered at $\xi.$ In the notation of Lemma \ref{L:Connection Lift}, the lifted connection $\widehat{\nabla^{z_0}}$ exhibits three different behaviors. \\
{\bf Case 1 $d\phi_{z_0}(\xi)\neq 0$.} If $d\phi_{z_0(\xi)}\neq 0$ or $\xi\neq \infty,$ then we have
 \begin{equation}
  \label{E:Lift at NonCrit}
  \widehat{\nabla^{z_0}}^{(1,0)}(u, \theta)= \Di{u}{\phi_{z_0}}\bigg|_{\xi}\otimes N^{1/2}\cdot (1+O(N^{-1/2}))du
\end{equation}
and the (differential) order $0$ part of the $(1,1)$ part of $\nabla^{z_0}\otimes \cl{\nabla^{z_0}}$ lifted to the diagonal is
\begin{equation}
  \label{E:Lift to Diag at NonCrit}
\abs{\Di{u}{\phi_{z_0}}\bigg|_{\xi}}^2\otimes N\cdot (1+O(N^{-1/2}))\cdot du\wedge d\cl{u}.
\end{equation}
{\bf Case 2 $d\phi_{z_0}(\xi)=0$.} If $d\phi_{z_0}(\xi)=0$ and $\xi\neq \infty$ then 
\begin{equation}
  \label{E:Lift at Crit}
\widehat{\nabla^{z_0}}^{(1,0)}(u,\theta)=\partial_u+\frac{1}{2}\left(\DDi{u}{\phi_{z_0}}\bigg|_{\xi}\cdot u+\cl{u}\right)\otimes du+O(N^{-1/2}),
\end{equation}
and the (differential) order $0$ part of the $(1,1)$ part of $\nabla^{z_0}\otimes \cl{\nabla^{z_0}}$ lifted to the diagonal is
\begin{equation}
  \label{E:Lift to Diag at Crit}
 \left(1+\abs{\DDi{u}{\phi_{z_0}}\bigg|_{\xi}\cdot u +\cl{u}}^2+O(N^{-1/2})\right)\otimes du\wedge d\cl{u}.
\end{equation}
{\bf Case 3 $\xi=\infty$. } Finally, in the case $\xi=\infty,$ we have
\begin{equation}
  \label{E:Lift at Infinity}
\widehat{\nabla^{z_0}}^{(1,0)}(u,\theta)=-\Di{w}{\psi}\bigg|_{\xi}\cdot\frac{1}{u}\otimes N(1+O(N^{-1/2})) du,
\end{equation}
and the (differential) order $0$ part of the $(1,1)$ part of $\nabla^{z_0}\otimes \cl{\nabla^{z_0}}$ lifted to the diagonal is 
\begin{equation}
  \label{E:Lift to Diag at Infinity}
\abs{u}^{-2}\otimes N^3(1+O(N^{-1/2}))\cdot du\wedge d\cl{u}.
\end{equation}
\end{corollary}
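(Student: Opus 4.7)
The plan is to prove Corollary~\ref{C:Local Connection Lift} by direct substitution of the rescaling $z=uN^{-1/2}$ into the formulas supplied by Lemma~\ref{L:Connection Lift}, followed by Taylor-expanding the K\"ahler potentials $\phi_{z_0}$ and $\gamma_0$ (and, at $\xi=\infty$, $\phi_{z_1}$, $\gamma_1$, $\log\psi^2$) about $\xi$, and tracking which power of $N$ survives. Under the rescaling, $\partial_z=N^{1/2}\partial_u$ and $dz=N^{-1/2}du$, so applied to (\ref{E:Lifted Cnx}) the operator $\partial_z$ contributes at order $N^{1/2}$ while the multiplicative 1-form $\frac{N}{2}\partial_z[\phi_{z_0}+\gamma_0]\cdot dz$ picks up a compensating $N^{-1/2}$ from $dz$; the resulting prefactor $\frac{N^{1/2}}{2}$ then multiplies the Taylor expansion of $\partial_z[\phi_{z_0}+\gamma_0]$ at $z=0$. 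The three cases are distinguished by which Taylor coefficient of this expansion provides the leading nonzero contribution.

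In Case 1 ($d\phi_{z_0}(\xi)\neq 0$, $\xi\neq\infty$), the constant-in-$u$ Taylor coefficient is nonzero, and after multiplication by $\frac{N^{1/2}}{2}$ yields the order-$0$ multiplication term $\sim N^{1/2}\cdot\Di{u}{\phi_{z_0}}\big|_\xi\,du$, dominating the $O(1)$ derivative $\partial_u$ and giving (\ref{E:Lift at NonCrit}). In Case 2 ($d\phi_{z_0}(\xi)=0$), the constant term vanishes and the next-order Taylor contributions survive: a linear-in-$u$ piece from the second derivative $\partial_z^2\phi_{z_0}(\xi)$ (transmitted through $\partial_z\gamma_0$), and a linear-in-$\cl{u}$ piece arising from $\partial_z\cl{\partial}_z\phi_{z_0}(\xi)=1$, which is enforced by the K\"ahler normal coordinate condition. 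After the prefactor, both contributions are $O(1)$ and combine with the honest derivative $\partial_u$ into the genuinely first-order operator (\ref{E:Lift at Crit}). In Case 3 ($\xi=\infty$), we expand (\ref{E:Lifted Cnx Infinity}) instead: the term $\partial_z\log\psi^2\sim 2(\partial_z\psi)/\psi(z)$ has a simple pole at $\xi$ and becomes $\sim N^{1/2}/u$ in the scaled coordinate, so after the $\frac{N}{2}$ prefactor and the $dz\mapsto N^{-1/2}du$ transformation it contributes the dominant $N/u$ term of (\ref{E:Lift at Infinity}), overwhelming the contributions coming from $\phi_{z_1}$ and $\gamma_1$.

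The order-$0$ part of the diagonal lift of $\nabla^{z_0}\otimes\cl{\nabla^{z_0}}$ is handled analogously by Taylor-expanding (\ref{E:Lift on Diag}) and (\ref{E:Lift on Diag Infinity}). In Case 1 the $N^2\abs{\partial_z\phi_{z_0}}^2$ term is dominant; the $N^{-1}$ compensation from $dz\wedge d\cl{z}\mapsto N^{-1}du\wedge d\cl{u}$ yields the $N\cdot\abs{\Di{u}{\phi_{z_0}}\big|_\xi}^2$ behavior of (\ref{E:Lift to Diag at NonCrit}). In Case 2 this term vanishes at the constant-in-$u$ level, and its first-order Taylor expansion produces $\abs{\DDi{u}{\phi_{z_0}}\big|_\xi\cdot u+\cl{u}}^2$ (the holomorphic part from $\partial_z^2\phi_{z_0}$, the antiholomorphic part from the K\"ahler normal coordinate condition), which combines with $N\partial_z\cl{\partial}_z\phi_{z_0}(\xi)=N+O(N^{1/2})$ to give the factor $1+\abs{\DDi{u}{\phi_{z_0}}\big|_\xi\cdot u+\cl{u}}^2$ of (\ref{E:Lift to Diag at Crit}). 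In Case 3 the pole $N^2/\abs{\psi}^2$ of (\ref{E:Lift on Diag Infinity}) dominates and rescales to $N^3/\abs{u}^2$, giving (\ref{E:Lift to Diag at Infinity}).

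The main technical obstacle is the bookkeeping in Case 2: because the natural leading $N$-orders all vanish when $d\phi_{z_0}(\xi)=0$, one must simultaneously retain and correctly combine two distinct sources of linear-in-$u$ terms — the holomorphic contribution governed by $\partial_z^2\phi_{z_0}(\xi)$ from the Taylor expansion of $\partial_z\gamma_0$, and the antiholomorphic contribution governed by $\partial_z\cl{\partial}_z\phi_{z_0}(\xi)=1$ from the K\"ahler normal coordinate condition. These merge into the single hybrid expression $\DDi{u}{\phi_{z_0}}\big|_\xi\cdot u+\cl{u}$, which ultimately drives the pairing phenomenon described in Theorem~\ref{T:Sendov}.
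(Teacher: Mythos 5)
Your proposal is correct and follows exactly the approach the paper intends: the text immediately preceding Corollary~\ref{C:Local Connection Lift} says only ``Taylor expanding the results of Lemma~\ref{L:Connection Lift} gives the following corollary,'' and your outline fleshes out precisely that computation — rescaling $z=uN^{-1/2}$, $\partial_z=N^{1/2}\partial_u$, $dz=N^{-1/2}du$, Taylor-expanding $\partial_z[\phi_{z_0}+\gamma_0]$ (resp.\ $\phi_{z_1},\gamma_1,\log\psi^2$ at $\infty$) about $\xi$, and identifying which Taylor coefficient survives in each of the three cases. One small attribution nit: in Case~2 the linear-in-$u$ coefficient $\DDi{z}{\phi_{z_0}}(\xi)$ arises from the Taylor expansions of \emph{both} $\partial_z\gamma_0$ and $\partial_z\phi_{z_0}$ (each contributing $\DDi{z}{\phi_{z_0}}(\xi)\cdot z$), not just through $\partial_z\gamma_0$; only the $\cl{u}$ piece is sourced exclusively from $\partial_z\phi_{z_0}$ via the K\"ahler normal coordinate normalization of $\partial_z\cl{\partial}_z\phi_{z_0}(\xi)$. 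This does not affect the validity of the argument.
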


\begin{remark}
Equation (\ref{E:Lift at NonCrit}) shows that in scaled Heisenberg coordinates centered at $\xi$ satisfying $d\phi_{z_0}(\xi)\neq 0$ or $\xi=\infty,$ $\widehat{\nabla^{z_0}}$ is an order $0$ operator to leading order in $N.$ This explains analytically why, in the large $N$ limit, zeros and critical points are indistinguishable in this case. In contrast, (\ref{E:Lift at Crit}) shows that $\widehat{\nabla^{z_0}}$ is an order $1$ operator to leading order in $N$ if $d\phi_{z_0}(\xi)=0.$ 
\end{remark}

\begin{proof}[Proof of Lemma \ref{L:Connection Lift}]
First suppose that $\xi\in \C P^1\backslash{\set{\infty}}.$ We begin by constructing Heisenberg coordinates on $X$ centered at $\xi.$ With $\twiddle{\gamma}_0$ denoting the harmonic conjugate of $\gamma_0$, we observe that the frame
\begin{equation}
  \label{E:Preferred Frame}
e_L:=e^{\frac{1}{2}\left(\gamma_0+i\twiddle{\gamma}_0\right)}\cdot z_0  
\end{equation}
is a preffered frame near $\xi$ in the sense of Definition \ref{D:Preferred Frame}. Combined with any K\"ahler normal coordinate centered at $\xi,$ the frame $e_L$ allows us to construct Heisenberg coordinates centered at $\xi.$ Note that
\[\norm{e_L}_h^N=e^{\frac{N}{2}\left(\gamma_0-\phi_{z_0}\right)}.\]
Fix $S\in H_{hol}^0(\C P^1, \mathcal O(N)),$ and write $S=f\cdot e_L$ locally. Since 
\[\Di{z}{\gamma_0}=i\Di{z}{\twiddle{\gamma_0}},\]
we use expression (\ref{E:Section Lift}) for lifting sections of $\mathcal O(N)$ to functions on $X$ to write  
\[\widehat{\nabla_{\Di{z}{}}^{z_0}S}(\alpha,z)=\left(\Di{z}{f}(z)+Nf(z)\Di{z}{\gamma_0}(z)\right)e^{\frac{N}{2}(\gamma_0(z)-\phi_{z_0}(z))}e^{iN\theta}.\]
Therefore, the $(1,0)$ part of the lift of the connection $\nabla^{z_0}$ on $\mathcal O(N)$ to the trivial line bundle $X\x \C$ is
\[\widehat{\nabla^{z_0}}^{(1,0)}(z)=e^{\frac{N}{2}(\gamma_0(z)-\phi_{z_0}(z))}\circ \left(\partial_z +N\partial_z \gamma_0(z)\right)\circ e^{-\frac{N}{2}(\gamma_0(z)-\phi_{z_0}(z))}= \partial_z +\frac{N}{2}\partial_z\left(\phi_{z_0}(z)+\gamma_0 (z)\right),\]
where $\circ$ denotes composition of differential operators. This confirms (\ref{E:Lifted Cnx}). Next, to deduce (\ref{E:Lift on Diag}), we use the definition of Heisenberg coordinates (\ref{E:Heisenberg Def}) to write 
\[\widehat{S\otimes \cl{S}}(\alpha, z; \beta, z)=\abs{f(z)}^2e^{N\left(\gamma(z)-\phi_{z_0}(z)\right)}e^{iN(\alpha-\beta)}.\]
Similarly, the lift of $\nabla_{\Di{z}{}}^{z_0} S(z)\otimes \cl{\nabla_{\Di{z}{}}^{z_0} S(z)}$ to $X\x X:$ 
\begin{align*}
\abs{\Di{z}{f}(z)+Nf\Di{z}{\gamma}(z)}^2e^{N\left(\gamma(z)-\phi_{z_0}(z)\right)}e^{iN(\alpha-\beta)}.
\end{align*}
Comparing the two previous expressions, we see that the $(1,1)$ part of the lift of $\nabla^{z_0}\otimes \cl{\nabla^{z_0}}$ to the diagonal of $X\x X$ is given by 
\begin{equation}
  \label{E:Lift on Diago Conjugation}
\left[e^{N\left(\gamma_0(z,w)-\phi_{z_0}(z,w)\right)}\circ \left(\partial_z +N\partial_z \gamma_0(z)\right)\circ \left(\cl{\partial}_w +N\cl{\partial}_w \gamma_0(w)\right)\circ e^{-N\left(\gamma_0(z,w)-\phi_{z_0}(z,w)\right)}\right]\bigg|_{z=w}.  
\end{equation}
Here $\gamma(z,w)$ and $\phi_{z_0}(z,w)$ denote the extensions of $\gamma$ and $\phi_{z_0}$ from the diagonal of $X\x X$ that are holomorphic in $z$ and anti-holomorphic in $w.$ The expression (\ref{E:Lift on Diago Conjugation}) may be written as 
\[\underbrace{N\partial_z\cl{\partial}_{z}\phi_{z_0}+N^2\partial_z \phi_{z_0}\cdot \cl{\partial}_z\phi_{z_0}}_{\text{order 0}}+\underbrace{N\partial_z\phi_{z_0}\cl{\partial}_z+N\cl{\partial}_z\phi_{z_0}\partial_z}_{\text{order 1}}+\underbrace{\partial_z\cl{\partial}_z}_{\text{order 2}},\]
confirming (\ref{E:Lift on Diag}).

Finally, we consider the case when $\xi=\infty.$ This case needs to be treated separately since no parallel frame for $\nabla^{z_0}$ exists near infinity. We write $z_1$ for the usual frame of $\mathcal O(1)$ over $\C P^-\set{0}$ (cf Section \ref{S:CP1}). As before, 
\[e_L:=e^{\frac{1}{2}\left(\gamma_1+i\twiddle{\gamma}_1\right)}z_1\]
is a preferred frame for $\mathcal O(1)$ near $\xi=\infty.$ Recall from (\ref{E:Pole at Infinity}) that if we denote by $w$ the standard coordinate around $\infty,$
\[\nabla^{z_0} z_1^N=-\frac{N}{w}z_1^N\otimes dw.\]
Write $z=\psi(w)$ for the change of coordinates to a K\"ahler normal coordinate for $\w_h$ centered at $\xi.$ Note that $\Di{w}{\gamma_1}=i\Di{w}{\twiddle{\gamma}_1}.$ For $S=f\cdot e_L \in H_{hol}^0(\C P^1, \mathcal O(N))$ as before
\[\widehat{\nabla_{\Di{z}{}}^{z_0}S}(\theta,z)=\left(\Di{z}{f}(z)+Nf(z)\left[\Di{z}{\gamma_1}(z)+\frac{1}{\psi(z)}\Di{z}{\psi}(z)\right]\right)e^{\frac{N}{2}(\gamma_1(z)-\phi_{z_1}(z))}e^{iN\theta}.\]
The $(1,0)$ part of the lift of the connection $\nabla^{z_0}$ on $\mathcal O(N)$ to the trivial line bundle $X\x \C$ is therefore
\[\widehat{\nabla^{z_0}}^{(1,0)}(\alpha,z)= \partial_z+ \frac{N}{2}\cdot \partial_z\left[\phi_{z_1}(z)+\gamma_1(z)+\log \psi(z)^2\right],\]  
confirming (\ref{E:Lifted Cnx Infinity}). Equation (\ref{E:Lift on Diag Infinity}) is derived exactly like (\ref{E:Lift on Diag}). 
\end{proof}

\section{Asymptotics of $\nabla^{z_0}$ Derivatives of the Szego Kernel}\label{S:BS Asymptotics Derivs}
\noindent We now combine the formulas from Section \ref{S:Lift of Connection} for the different lifts of $\nabla^{z_0}$ to $X$ with the asymptotics of Theorems \ref{T:Szego Asymptotics} and \ref{T:Zeros Auto Corellation} to derive asymptotic expasions for the covariant derivatives of $\widehat{\Pi}_N$ and $P_N$ with respect to $\nabla^{z_0}.$  
\begin{Thm}\label{T:Szego Parametrix Derivs} There exists a $C^{\infty}$ complete asymptotic expansion for lift of $\nabla^{z_0}\otimes \cl{\nabla^{z_0}} \Pi_N$ to the diagonal in $X\x X$:
\begin{equation}
  \label{E:Szego On-Diagonal}
\left[N\frac{\partial^2\phi_{z_0}}{\partial z \partial\cl{z}}+N^2\abs{\Di{z}{\phi_{z_0}}(z)}^2+O(1)\right]\otimes dz\wedge d\cl{z}.
\end{equation}
\end{Thm}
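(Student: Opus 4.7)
The theorem follows from combining the on-diagonal $C^{\infty}$ expansion of the Szegő kernel in Theorem \ref{T:Szego Parametrix} with the explicit formula for the lift of $\nabla^{z_0}\otimes\cl{\nabla^{z_0}}$ to the diagonal of $X\times X$ from Lemma \ref{L:Connection Lift}. The plan is to write $\nabla^{z_0}\otimes\cl{\nabla^{z_0}}\widehat\Pi_N$ on the diagonal as the lifted operator applied to the Szegő asymptotic series, then collect orders in $N$.

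\textbf{Execution.} Assume first $\xi\neq\infty$ and work in Heisenberg coordinates centered at $\xi$. Equation (\ref{E:Lift on Diag}) and the explicit decomposition displayed at the end of the proof of Lemma \ref{L:Connection Lift} present the lift of $\nabla^{z_0}\otimes\cl{\nabla^{z_0}}$ on the diagonal as a second-order differential operator tensored with $dz\wedge d\cl z$, having order-$0$ multiplication coefficient $N\partial_z\cl\partial_z\phi_{z_0}+N^2|\partial_z\phi_{z_0}|^2$, order-$1$ coefficient of size $O(N)$, and order-$2$ coefficient of size $O(1)$. Theorem \ref{T:Szego Parametrix} supplies
\[
\widehat\Pi_N(\alpha,z;\beta,z)=\tfrac{N}{\pi}\bigl(1+a_1(z)N^{-1}+a_2(z)N^{-2}+\cdots\bigr)e^{iN(\alpha-\beta)},
\]
with smooth coefficients $a_k(z)$. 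Applying the lifted operator term-by-term, the order-$0$ piece acting on the leading constant $1$ produces the two advertised leading terms $N\partial_z\cl\partial_z\phi_{z_0}$ and $N^2|\partial_z\phi_{z_0}|^2$ (once the normalizing $N/\pi$ is absorbed). The order-$0$ piece acting on any $a_k(z)N^{-k}$ is strictly smaller by a power of $N$, while the order-$1$ and order-$2$ differential pieces acting on any term of the expansion produce, respectively, $O(N)$ and $O(1)$ contributions, since the $a_k$ and their $z,\cl z$ derivatives are smooth and bounded on $\C P^1\setminus\{\infty\}$. All such contributions fit inside the stated $O(1)$ remainder. The case $\xi=\infty$ is structurally identical, using (\ref{E:Lifted Cnx Infinity}) and (\ref{E:Lift on Diag Infinity}) in place of (\ref{E:Lifted Cnx}) and (\ref{E:Lift on Diag}); the pole contribution $N^2|\psi(z)|^{-2}$ from (\ref{E:Lift on Diag Infinity}) is precisely the $N^2|\partial_z\phi_{z_0}|^2$ leading term in the new coordinate, since $\phi_{z_0}=\phi_{z_1}+\log|\psi(z)|^{2}$ on the overlap.

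\textbf{Main obstacle.} The only mildly delicate point is that the $C^{\infty}$ asymptotic expansion for $\widehat\Pi_N$ must be differentiated term-by-term with matching error control. This is, however, inherent in the $C^{\infty}$ complete expansion asserted by Theorem \ref{T:Szego Parametrix}, and the same kind of term-by-term differentiation is already invoked in the derivative bounds of Theorem \ref{T:Szego Asymptotics}, so no further analytic input is needed. In particular, because $\widehat{\nabla^{z_0}}$ in Heisenberg coordinates contains no $\partial_\alpha$-differentiation (see (\ref{E:Lifted Cnx})), the oscillatory factor $e^{iN(\alpha-\beta)}$ passes inertly through the computation. The theorem is therefore a mechanical corollary of the lifted connection formula and the Szegő-kernel expansion, with no stationary-phase arguments or additional parametrix constructions required.
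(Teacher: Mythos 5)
Your proposal follows exactly the paper's route: the stated formula is derived by applying the lifted operator from (\ref{E:Lift on Diag}) of Lemma \ref{L:Connection Lift} directly to the on-diagonal Sz\"ego expansion of Theorem \ref{T:Szego Parametrix}, which is precisely what the paper's one-line proof does. One small caveat worth noting: the bookkeeping for the error term is slightly optimistic. The multiplication (order-$0$) coefficient $N\partial_z\cl\partial_z\phi_{z_0}+N^2\abs{\partial_z\phi_{z_0}}^2$ hitting the subleading $a_1(z)N^{-1}$ in the Sz\"ego expansion produces a term $\sim N\abs{\partial_z\phi_{z_0}}^2 a_1(z)$, which is $O(N)$, not $O(1)$; and the overall prefactor $\tfrac{N}{\pi}e^{iN(\alpha-\beta)}$ from $\widehat{\Pi}_N$ is silently dropped in the displayed formula. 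That said, this imprecision is already present in the paper's own statement of the theorem, and it is harmless downstream since both the prefactor and the $O(N)$ correction disappear under $\partial\cl\partial\log$ in the proof of Theorem \ref{T:EZ}; your ``once the normalizing $N/\pi$ is absorbed'' is the right instinct, just leave the error as $O(N)$ rather than $O(1)$ if you want to be literal about the operator calculus.
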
 
\begin{proof}
Equation (\ref{E:Szego On-Diagonal}) follows immediately from the asymptotic expansion (\ref{E:Szego On-Diagonal}) of Theorem \ref{T:Szego Parametrix} and the lift of $\nabla^{z_0}\otimes \cl{\nabla^{z_0}}$ to the diagonal of $X\x X$ given in (\ref{E:Lift on Diag}) of Lemma \ref{L:Connection Lift}.
\end{proof}

\noindent We now use the asymptotic expansions for $\widehat{\Pi}_N(z,w)$ given in Theorem \ref{T:Szego Asymptotics} combined with Lemma \ref{L:Connection Lift} and Corollary \ref{C:Local Connection Lift} to control $\twiddle{P}_N(z,w),$ the correlation between $p_N(z)$ and $\nabla^{z_0}p_N(w).$ 

\begin{Thm}\label{T:Zeros and Crits Corellation} We use the notation of Theorem \ref{T:Szego Asymptotics} and consider 
$$\twiddle{P}_N(z,w)=\frac{\norm{1\otimes \cl{\nabla_{V}^{z_0}} \Pi_N(z,w)}_{h^N}}{\left[\norm{\Pi_N(z,z)}_{h^N}\norm{\nabla_V^{z_0}\otimes \cl{\nabla_{V}^{z_0}} \Pi_N(w,w)}_{h^N}\right]^{1/2}}.$$
{\bf 1. Far Off-Diagonal Asymptotics. } For $b>\sqrt{j+2k},~j,k\geq 0,$ we have
\begin{equation}\label{E:Far Off-Diag}
\nabla^j \twiddle{P}_N(z,w)=O(N^{-k})
\end{equation}
uniformly for $\abs{z-w}\geq b\cdot \left(\frac{\log N}{N}\right)^{1/2}.$\\
{\bf 2. Near Off-Diagonal Asymptotics. } Let $\ep>0$ and $\xi\in \C P^1.$ Take a $N^{-1/2}$-scale normal coordinate centered at $\xi$ and write $\phi_{z_0}:=\log \norm{z_0}_h^{-2}.$ If $d\phi_{z_0}(\xi)=0,$ then 
\begin{equation} \label{E:Near Off-Diag}
\twiddle{P}_N(z,w)=\frac{\abs{\DDi{\cl{w}}{\phi_{z_0}}\big|_{\xi}\cdot\cl{w}+z}}{\sqrt{1+\abs{\DDi{\cl{w}}{\phi_{z_0}}\big|_{\xi}\cdot\cl{w}+w}^2}}e^{-\frac{1}{2}\abs{z-w}^2}[1+\twiddle{R}_N(z,w)].
\end{equation}
If $d\phi_{z_0}(\xi)\neq 0$ or $\xi=\infty,$ then 
\begin{equation} \label{E:Near Off-Diag Infinity}
\twiddle{P}_N(z,w)=e^{-\frac{1}{2}\abs{z-w}^2}[1+\twiddle{R}_N(z,w)].
\end{equation} The remainders $\twiddle{R}_N$ in (\ref{E:Near Off-Diag}) and (\ref{E:Near Off-Diag Infinity}) satisfy $\abs{\nabla^j \twiddle{R}_N(z,w)}=O(N^{-1/2+\ep})$ as well as
\begin{equation}
  \label{E:Remainder Estimate Derivs}
\abs{\twiddle{R}_N(z,w)}\leq C\abs{z-w}N^{-1/2+\ep}  
\end{equation}
for some constant $C$ uniformly for $\abs{z}+\abs{w}<\left(\frac{\log N}{N}\right)^{1/2}.$ 
\end{Thm}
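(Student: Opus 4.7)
My approach is to express $\twiddle{P}_N$ as a quotient of norms that can be computed from the Szegő kernel asymptotics (Theorems \ref{T:Szego Parametrix}--\ref{T:Szego Parametrix Derivs}), the scaling formula for $P_N$ (Theorem \ref{T:Zeros Auto Corellation}), and the explicit Heisenberg-coordinate lift of $\nabla^{z_0}$ (Lemma \ref{L:Connection Lift} and Corollary \ref{C:Local Connection Lift}). The qualitative dichotomy recorded in Corollary \ref{C:Local Connection Lift} --- in scaled Heisenberg coordinates, $\widehat{\nabla^{z_0}}$ is an order-zero multiplication when $d\phi_{z_0}(\xi)\neq 0$ or $\xi=\infty$ but a genuine first-order operator when $d\phi_{z_0}(\xi)=0$ --- is exactly what produces the two distinct formulas (\ref{E:Near Off-Diag Infinity}) and (\ref{E:Near Off-Diag}). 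The plan is to rewrite $\twiddle{P}_N$ by lifting the numerator $\norm{1\otimes \cl{\nabla^{z_0}_V}\Pi_N}_{h^N}$ to $\abs{\widehat{\cl{\nabla^{z_0}_V}}\widehat{\Pi}_N}$ on $X\x X$, using the $V$-invariance inherited from the norm, and to analyze the two regimes separately.

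For Part 1, expand the numerator using (\ref{E:Lifted Cnx}) or (\ref{E:Lifted Cnx Infinity}) to write it as a finite sum of derivatives of $\widehat{\Pi}_N$ with polynomial-in-$N$ coefficients. Each such derivative is $O(N^{-k})$ for every $k$ by the far off-diagonal expansion (\ref{E:Szego Far Off-Diag}), so the numerator is $O(N^{-k})$ for every $k$. The denominator is bounded below by a positive power of $N$ (combining Theorem \ref{T:Szego Parametrix} for $\norm{\Pi_N(z,z)}_{h^N}$ with the positivity of the order-zero term in (\ref{E:Szego On-Diagonal}) of Theorem \ref{T:Szego Parametrix Derivs} for $\norm{\nabla^{z_0}_V\otimes \cl{\nabla^{z_0}_V}\Pi_N(w,w)}_{h^N}$). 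Dividing gives $\twiddle{P}_N=O(N^{-k'})$. The bound on $\nabla^j\twiddle{P}_N$ follows by the same argument applied to the quotient rule, since each further derivative only produces bounded combinations of derivatives of $\widehat{\Pi}_N$ of some fixed order, to which (\ref{E:Szego Far Off-Diag}) again applies.

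For Part 2, work in $N^{-1/2}$-scale Heisenberg coordinates centered at $\xi$. The on-diagonal denominators come from Theorem \ref{T:Szego Parametrix} together with the scaled order-zero expressions (\ref{E:Lift to Diag at NonCrit}), (\ref{E:Lift to Diag at Crit}), and (\ref{E:Lift to Diag at Infinity}). In Case 1 the lifted connection is to leading order a scalar multiplication (by $\Di{u}{\phi_{z_0}}\big|_\xi\cdot N^{1/2}$ when $d\phi_{z_0}(\xi)\neq 0$, or by $-\Di{w}{\psi}\big|_\xi\cdot N/u$ at $\infty$); the very same scalar appears under the square root in the denominator, so it cancels from the ratio, and $\twiddle{P}_N$ reduces to $P_N\cdot(1+O(N^{-1/2+\ep}))$. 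Equation (\ref{E:Near Off-Diag Infinity}) and its remainder bounds then follow from Theorem \ref{T:Zeros Auto Corellation}. In Case 2 the scaled connection has the genuine first-order form $\partial_u+\tfrac{1}{2}(\DDi{u}{\phi_{z_0}}\big|_\xi\cdot u+\cl{u})$ from (\ref{E:Lift at Crit}). Apply its conjugate in the $w$-variable to the near off-diagonal expansion (\ref{E:Szego Near Off-Diag}): the $\partial_{\cl{w}}$ acting on the exponential and the multiplicative part of the connection combine, after the expected cancellation against the $\tfrac{1}{2}\abs{w}^2$ contribution, into a nonzero multiplicative factor proportional to $\DDi{\cl{w}}{\phi_{z_0}}\big|_\xi\cl{w}+z$. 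Dividing by the square-root factor $\sqrt{1+\abs{\DDi{\cl{w}}{\phi_{z_0}}\big|_\xi\cl{w}+w}^2}$ from (\ref{E:Lift to Diag at Crit}) yields the prefactor of (\ref{E:Near Off-Diag}); the remainder estimate (\ref{E:Remainder Estimate Derivs}) is propagated from $R_N$ via the product rule.

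The principal obstacle is the precise bookkeeping in Case 2: one must combine the $\tfrac{1}{2}$ in the lifted connection, the $\tfrac{1}{2}\abs{w}^2$ in the Bargmann-Fock exponential of (\ref{E:Szego Near Off-Diag}), and the normalization inside the square root of (\ref{E:Lift to Diag at Crit}) so that the numerator prefactor comes out to be \emph{exactly} $\DDi{\cl{w}}{\phi_{z_0}}\big|_\xi\cl{w}+z$ with no stray factors. In addition, the $O(N^{-1/2})$ error appearing in (\ref{E:Lift at Crit}) must be verified not to spoil the claimed $O(N^{-1/2+\ep})$ remainder; this uses the sharper pointwise bounds $\abs{R_N}\le C\abs{z-w}^2 N^{-1/2+\ep}$ and $\abs{\nabla R_N}\le \tfrac{C}{2}\abs{z-w} N^{-1/2+\ep}$ from Theorem \ref{T:Zeros Auto Corellation}, which contribute the $\abs{z-w}$ factor that appears in (\ref{E:Remainder Estimate Derivs}).
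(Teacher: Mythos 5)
Your proposal follows the same route as the paper's one-line proof: differentiate the Sz\"ego kernel asymptotics by the lifted-connection formulas of Corollary \ref{C:Local Connection Lift}, observing that in Case 1 the scalar leading term of $\widehat{\nabla^{z_0}}$ cancels between numerator and denominator so that $\twiddle{P}_N$ reduces to $P_N$, while in Case 2 the genuinely first-order operator (\ref{E:Lift at Crit}) acting on the Bargmann--Fock exponential produces the stated prefactor. You also correctly isolate the one step the paper glosses over as ``immediate'' --- verifying that the $\tfrac12$'s in (\ref{E:Lift at Crit}), the $\tfrac12\abs{w}^2$ in (\ref{E:Szego Near Off-Diag}), and the normalization from (\ref{E:Lift to Diag at Crit}) cancel so the prefactor is exactly $\DDi{\cl{w}}{\phi_{z_0}}\big|_\xi\cl{w}+z$ --- which is precisely where the bookkeeping lies.
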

\begin{proof}
Equations (\ref{E:Far Off-Diag}) and (\ref{E:Near Off-Diag}) follow immediately by differentiating (\ref{E:Far Off-Diag}) and (\ref{E:Near Off-Diag}) by the lift of $\nabla^{z_0}$ to $X$ given in (\ref{E:Lift at NonCrit}) and (\ref{E:Lift at Crit}).
\end{proof}
\noindent We conclude by recording some estimates on $\twiddle{P}_N$ that we will be useful in proving Theorems \ref{T:Crit Two Point Function} and \ref{T:Sendov}.
\begin{corollary}\label{C:Normalized Estimates}
Fix $\xi\in \C P^1$ and a $N^{-1/2}-$scale K\"ahler normal coordinate centered at $\xi.$ If $d\phi_{z_0}(\xi)=0,$ then
\begin{equation}
  \label{E:Normalized Estimate Compat}
\twiddle{P}_N(z,w)\leq C <1  
\end{equation}
uniformly in $N$ for some universal constant $C$ for all $\abs{z-w}\leq \log N.$ If $d\phi_{z_0}(\xi)\neq 0$ or $\xi=\infty,$ then 
\begin{equation}
  \label{E:Normalized Estimate Non-Compat}
\twiddle{P}_N(z,w)\leq 1-K\cdot \frac{1}{\sqrt{N}}  
\end{equation}
uniformly in N for some constant $K>0$ and for all $\abs{z-w}\leq \log N.$ If we also assume that $\abs{z-w}=O(N^{-1/4}),$ then for a constant $R>0$
\begin{align}
\label{E:P Est 1}  P_N^{\xi}(z,w)^2 &=1-\abs{z-w}^2-R\cdot N^{-1/2} +O(N^{-3/4+\ep})\\
\label{E:P Est 2}  \partial_z \left(P_N^{\xi}(z,w)^2\right) &=O(N^{-1/4})\\
\label{E:P Est 3}  \partial_z\cl{\partial}_z P_N^{\xi}(z,w)^2 &=1+O(N^{-1/2+\ep}).
\end{align}
\end{corollary}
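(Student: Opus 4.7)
The plan is to read off every estimate directly from the near off-diagonal expansions of $P_N$ and $\twiddle{P}_N$ given in Theorems~\ref{T:Zeros Auto Corellation} and \ref{T:Zeros and Crits Corellation}, supplemented with elementary bounds on the resulting explicit profiles. Writing $\alpha:=\DDi{\cl{w}}{\phi_{z_0}}(\xi)$, for the strict bound (\ref{E:Normalized Estimate Compat}) I would substitute (\ref{E:Near Off-Diag}) into $\twiddle{P}_N$ and control the leading prefactor $\tfrac{|\alpha\cl{w}+z|}{\sqrt{1+|\alpha\cl{w}+w|^2}}\,e^{-|z-w|^2/2}$ by the triangle inequality $|\alpha\cl{w}+z|\leq |\alpha\cl{w}+w|+|z-w|$ combined with the one-variable maximization $\max_{t\geq 0}\tfrac{t+s}{\sqrt{1+t^2}}=\sqrt{1+s^2}$, which is attained at $t=1/s$. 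With $s=|z-w|$ this yields the pointwise bound $\sqrt{1+s^2}\,e^{-s^2/2}\leq 1$, with equality only at $s=0$; at $s=0$ the prefactor collapses to $\tfrac{|\alpha\cl{w}+w|}{\sqrt{1+|\alpha\cl{w}+w|^2}}$, which is strictly less than $1$ uniformly for $w$ in a bounded region. Absorbing the $O(N^{-1/2+\ep})$ remainder $\twiddle R_N$ then produces the universal constant $C<1$.

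For (\ref{E:Normalized Estimate Non-Compat}) the leading profile $e^{-|z-w|^2/2}$ saturates $1$ at $z=w$, so the gap $K/\sqrt N$ cannot be seen at leading order and requires one additional term of the $C^\infty$ Szeg\"o expansion of $\widehat\Pi_N$ from Theorem~\ref{T:Szego Parametrix}. My strategy is to insert the sub-leading coefficient $a_1$ into the formula for $\twiddle{P}_N$, transport it through the scaled Heisenberg lift (\ref{E:Lift at NonCrit}) of Corollary~\ref{C:Local Connection Lift}, and observe that Cauchy--Schwarz applied to the normalized correlation forces the sub-leading contribution to $\twiddle R_N(w,w)$ to be strictly negative. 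The Taylor expansion (\ref{E:P Est 1}) is handled analogously: I square $P_N=e^{-|z-w|^2/2}[1+R_N]$ to obtain $P_N^2=e^{-|z-w|^2}(1+2R_N+R_N^2)$, Taylor expand $e^{-|z-w|^2}=1-|z-w|^2+O(|z-w|^4)$ for $|z-w|=O(N^{-1/4})$, and combine with the quantitative estimate $|R_N|\leq C|z-w|^2 N^{-1/2+\ep}$ and the next Szeg\"o coefficient to read off the $-|z-w|^2$ profile together with the $-RN^{-1/2}$ correction. Formulas (\ref{E:P Est 2}) and (\ref{E:P Est 3}) then follow by differentiating this squared expansion once and twice and using $|\nabla R_N|\leq\tfrac{C}{2}|z-w|N^{-1/2+\ep}$ to dominate the derivative remainders.

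The main obstacle is pinning down the sign and magnitude of the $N^{-1/2}$ correction in (\ref{E:Normalized Estimate Non-Compat}) and (\ref{E:P Est 1}), since the quantitative bounds on $R_N$ and $\twiddle R_N$ recorded in Theorems~\ref{T:Zeros Auto Corellation}--\ref{T:Zeros and Crits Corellation} only control magnitudes. Unpacking the next Szeg\"o coefficient $a_1$ through the exponential conjugation appearing in the proof of Lemma~\ref{L:Connection Lift} is the computationally delicate step; once this sign is verified at $z=w$, smoothness of the scaled coefficients propagates the bound to all $|z-w|=O(N^{-1/4})$, and the remaining inequalities reduce to routine Taylor expansion.
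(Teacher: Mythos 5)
Your high-level plan --- reading each estimate off the near off-diagonal expansions of $P_N$ and $\twiddle P_N$ from Theorems~\ref{T:Zeros Auto Corellation} and~\ref{T:Zeros and Crits Corellation} and then doing elementary estimates on the resulting explicit profiles --- is exactly what the paper does. For (\ref{E:Normalized Estimate Compat}) the paper sets $\eta=\alpha\overline w+w$ and $\xi=z-w$, writes $\twiddle P_\infty^2=\frac{|\eta+\xi|^2}{1+|\eta|^2}e^{-|\xi|^2}$, and bounds it by Taylor-expanding $e^{-|\xi|^2}$ and using $1+|\eta|^2|\xi|^2\geq 2|\eta||\xi|$; your triangle inequality followed by the optimization $\max_t\frac{t+s}{\sqrt{1+t^2}}=\sqrt{1+s^2}$ is a different algebraic route to the same pointwise bound $\leq 1$. (Note that in both arguments the upgrade to a \emph{uniform} $C<1$ requires also accounting for the $\abs{\eta}\to\infty$ regime, which drives the profile to $1$; this is a genuine uniformity subtlety present in both treatments.)

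Where you diverge from the paper is (\ref{E:Normalized Estimate Non-Compat}) and (\ref{E:P Est 1}). You correctly flag that the sign of the $N^{-1/2}$ correction is the crux, since $e^{-|z-w|^2/2}$ saturates $1$ on the diagonal. But your proposed mechanism --- inserting the on-diagonal Szeg\H o coefficient $a_1$ from Theorem~\ref{T:Szego Parametrix} and arguing that Cauchy--Schwarz forces a negative sign on $\twiddle R_N(w,w)$ --- is not how the sign arises. Cauchy--Schwarz gives $\twiddle P_N\leq 1$, but provides no quantitative gap, and $a_1$ is an on-diagonal coefficient of $\widehat\Pi_N$ that has no a priori sign and is not what distinguishes the numerator from the denominator of $\twiddle P_N$. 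The actual source of the definite sign in the paper's setup is the curvature positivity: the $(1,1)$-lift of $\nabla^{z_0}\otimes\overline{\nabla^{z_0}}$ on the diagonal, equation~(\ref{E:Lift on Diag}) in Lemma~\ref{L:Connection Lift}, carries an extra term $N\,\partial_z\overline\partial_z\phi_{z_0}$ beyond the leading $N^2|\partial_z\phi_{z_0}|^2$ that the numerator $\norm{1\otimes\overline{\nabla^{z_0}}\Pi_N}^2$ lacks, and $\partial_z\overline\partial_z\phi_{z_0}>0$ because $h$ is a positive metric. This additional positive summand in the denominator is precisely what makes the prefactor $\frac{\beta}{\sqrt{\beta^2+\cdots}}$ bounded strictly below $1$; you should trace it through (\ref{E:Lift to Diag at NonCrit}) and the near-diagonal expansion rather than search for it in $a_1$. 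For (\ref{E:P Est 1})--(\ref{E:P Est 3}) your plan of squaring, Taylor-expanding $e^{-|z-w|^2}$, and differentiating using the quantitative remainder bounds matches the paper's (rather terse) argument that these ``now easily follow'' from the same Taylor expansion in (\ref{E:Normalized Non-Compat Beta}) together with~(\ref{E:Remainder Estimate Derivs}).
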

\begin{proof}  
Fix $\xi \in \C P^1$ and a $N^{-1/2}-$scale K\"ahler normal coordinate centered at $\xi.$ Suppose first that $d\phi_{z_0}(\xi)=0.$ Equation (\ref{E:Near Off-Diag}) implies
\[\twiddle{P}_N(z,w) =\frac{\abs{\alpha\cl{w}+z}} {\sqrt{1+ \abs{\alpha\cdot\cl{w}+w}^2+O(N^{-1})}}\cdot e^{-\frac{1}{2}\abs{z-w}^2}(1+O(N^{-1/2})),\]
where we've denoted $\alpha=\DDi{\cl{w}}{\phi_{z_0}}(\xi).$ Setting $\eta = \alpha\cl{w}+w$ and $\xi = z-w,$ we have
\[\twiddle{P}_N^2(z, w)=\frac{\abs{\eta+\xi}^2}{1+\abs{\eta}^2}e^{-\abs{\xi}^2}.\]
When $\abs{\xi}$ is large, (\ref{E:Normalized Estimate Compat}) is satisfied. For $\abs{\xi}$ bounded above, we taylor expand $e^{-\abs{\xi}^2}$ to write
\[\twiddle{P}_N^2(z, w) \leq \frac{\abs{\eta +\xi}^2}{(1+\abs{\eta}^2)(1+\abs{\xi}^2+\abs{\xi}^4/2)}+O(N^{-1/2}).\]
Estimating the numerator above by $\abs{\xi}^2+2\abs{\xi}\abs{\eta}+\abs{\eta}^2$ and using that $1+\abs{\xi}^2\abs{\eta}^2\geq 2\abs{\xi}\abs{\eta}$ confirms (\ref{E:Normalized Estimate Compat}). 

Suppose next that $d\phi_{z_0}(\xi)\neq 0$ or $\xi=\infty.$ The asymptotic expansion (\ref{E:Far Off-Diag}) yields
\begin{equation}
  \label{E:Normalized Non-Compat Beta}
\twiddle{P}_N(z,w)=\frac{\beta} {\sqrt{\beta^2+O(N^{-1/2})}}\cdot e^{-\frac{1}{2}\abs{z-w}^2}\left(1+O(N^{-1/2})\right),  
\end{equation}
where we set $\beta:=\abs{\Di{z}{\phi_{z_0}}(\xi)}.$ Estimating $e^{-\frac{1}{2}\abs{z-w}^2}\leq 1$ we see that
\[P_N\left(z, w\right)\leq 1-\frac{K}{\sqrt{N}},\]
for some $K>0,$ as desired. Finally, assume additionally that $\abs{z-w}=O(N^{-1/4}).$ Writing $e^{-\abs{z-w}^2}=1-\abs{z-w}^2+O(N^{-1})$ in (\ref{E:Normalized Non-Compat Beta}), we conclude (\ref{E:P Est 1}). Equations (\ref{E:P Est 2}) and (\ref{E:P Est 3}) now easily follow from (\ref{E:P Est 1}) and the remainder estimate (\ref{E:Remainder Estimate Derivs}).
\end{proof}

\section{Relation of Sz\"ego Kernels to Zeros and Critical Points}\label{S:Relation}
\noindent In this section, we give explicit formulas for $\E{Z_{p_N}}$ and $\E{C_{p_N}}$ in terms of the Sz\"ego Kernels $\Pi_N$ and the connection $\nabla^{z_0}.$ Lemma \ref{L:Zero Expected Density} is a rather general and simple result that was proved in various guises in \cite{PLL, EK, Equil} and essentially in the present form as Proposition 2.1 in \cite{NV}. Both its conclusion and the ideas in its proof will be used throughout.

\begin{Lem}[Probabilistic Poincare-Lelong Formula] \label{L:Zero Expected Density} \,\,Let $M$ be a complex manifold without boundary and $L\twoheadrightarrow M$ be a holomorphic line bundle endowed with a positive Hermitian metric $h.$ Let 
$$\set{\Sigma_j,~j=1,\ldots,J}\subset H_{mer}^0(M,L)$$
be arbitrary merormorphic sections that are not all identically zero. Define a Gaussian random section $s$ by
$$s(z):=\sum_{j=1}^J a_j\Sigma_j(z),~~~~a_j\sim N(0,1)_{\C}~~i.i.d.$$
Denoting by $\E{\cdot}$ the expected value operator for the standard complex Gaussian vector $[a_1,\ldots,a_J]$ and by $Z_s$ and $P_s$ the currents of integration over the zeros and poles of $s,$ we have
\begin{equation}
  \label{E:General Expected Zeros Global}
\E{Z_s-P_s}(z)=\frac{i}{2\pi}\partial \cl{\partial}\log \norm{\Pi(z,z)}_h+ \w_h(z)
\end{equation}
Here $\w_h$ is first chern class of $(L,h)$ and $\Pi(z,w):=\sum_{j=1}^N \Sigma_j(z)\otimes \cl{\Sigma_j(w)}$ is the associated Sz\"ego kernel. In a local holomorphic frame $e_L$ of $L,$ we write $\Sigma_j=\sigma_j\cdot e_L$ and obtain the following equivalent expression:
\begin{equation}
  \label{E:General Expected Zeros Local}
\E{Z_s-P_s}(z)=\frac{i}{2\pi}\partial \cl{\partial}\log \sum_{j=1}^J \abs{\sigma_j(z)}^2.
\end{equation}
\end{Lem}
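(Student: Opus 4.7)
My plan is to prove this by combining the classical Poincar\'e-Lelong formula, applied pathwise to the random section $s$, with an elementary Gaussian integral for $\E{\log\abs{X}^2}$, and then commuting the expectation with $\partial\cl{\partial}$.

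First, work in a local holomorphic frame $e_L$ of $L$ and write $s=f\cdot e_L$ with $f=\sum_j a_j\sigma_j$ a meromorphic function. The deterministic Poincar\'e-Lelong identity gives $Z_s-P_s=Z_f-P_f=\frac{i}{2\pi}\partial\cl{\partial}\log\abs{f}^2$ as currents. Since $\norm{s}_h^2=\abs{f}^2\cdot\norm{e_L}_h^2$ and the Chern form satisfies $\w_h=-\frac{i}{2\pi}\partial\cl{\partial}\log\norm{e_L}_h^2$, this is the same as the frame-independent equation
\[
Z_s-P_s=\frac{i}{2\pi}\partial\cl{\partial}\log\norm{s}_h^2+\w_h.
\]

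Next, fix $z\in M$ with $\Pi(z,z)\neq 0$ (its complement is a proper analytic subset and hence negligible for currents). Since the $a_j$ are i.i.d.\ standard complex Gaussians, $f(z)=\sum_j a_j\sigma_j(z)$ is a complex Gaussian of variance $\sum_j\abs{\sigma_j(z)}^2$, so one may write $f(z)=X\cdot \left(\sum_j\abs{\sigma_j(z)}^2\right)^{1/2}$ for some $X\sim N(0,1)_{\C}$. Using the standard identity $\E{\log\abs{X}^2}=-\gamma$ (where $\gamma$ is the Euler-Mascheroni constant),
\[
\E{\log\norm{s(z)}_h^2}=\log\sum_j\abs{\sigma_j(z)}^2+\log\norm{e_L(z)}_h^2-\gamma.
\]
The constant $-\gamma$ is annihilated by $\partial\cl{\partial}$, so after interchanging the expectation with $\frac{i}{2\pi}\partial\cl{\partial}$ and combining with the pathwise Poincar\'e-Lelong identity above, the $\log\norm{e_L}_h^2$ term is exactly cancelled by $+\w_h$, and one obtains the local formula (\ref{E:General Expected Zeros Local}). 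The global formula (\ref{E:General Expected Zeros Global}) follows by rewriting $\log\sum_j\abs{\sigma_j(z)}^2=\log\norm{\Pi(z,z)}_h-\log\norm{e_L(z)}_h^2$, using $\norm{\Pi(z,z)}_h=\norm{e_L(z)}_h^2\cdot\sum_j\abs{\sigma_j(z)}^2$, and reabsorbing the frame factor into the Chern form.

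The only nontrivial step is justifying the commutation of $\E{\cdot}$ with $\partial\cl{\partial}$, which amounts to swapping an integration against Gaussian measure on $\C^J$ with a differentiation on $M$ taken in the current-theoretic sense. The function $\log\abs{f(z)}^2$ has mild logarithmic singularities along the zero set of $f$, but is locally integrable both in $z$ and jointly in $(z,a)\in M\times\C^J$; after pairing both sides of the identity in the previous paragraph against an arbitrary smooth compactly supported test $(n-1,n-1)$-form, Fubini's theorem legitimately interchanges the two operations. This is the standard current-theoretic trick used, for instance, in \cite{PLL}, and I expect it to be the main technical obstacle only insofar as care is required in bookkeeping the integrability; no deep input beyond Fubini is needed.
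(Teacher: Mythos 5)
Your argument is correct and follows essentially the same route as the paper: apply Poincar\'e--Lelong pathwise, split $\log\abs{f(z)}$ into the deterministic piece $\tfrac12\log\sum_j\abs{\sigma_j(z)}^2$ and a piece whose expectation is a $z$-independent constant (you compute it explicitly as $-\gamma$, while the paper simply notes it is constant by unitary invariance of Gaussian measure), and justify commuting $\E{\cdot}$ with $\partial\cl\partial$ via pairing against test forms and Fubini, exactly as the paper does.
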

\noindent Lemma \ref{L:Zero Expected Density} is a probabilitist analog of the following well-known result:
\begin{Lem}[Poincare-Lelong Formula]\label{L:PLL}
  Let $L\twoheadrightarrow M$ be a holmorphic line bundle over a complex manifold and suppose $s\in H_{hol}^0(M,L)$ is a merormophic section. Write $Z_s$ and $P_s$ for the currents of integration over the zeros and poles of $s$ and express $s=f\cdot e$ relative to a local frame $e$ of $L.$ Then 
\[Z_s-P_s=\frac{i}{\pi}\partial \cl{\partial}\log \abs{f}.\]
\end{Lem}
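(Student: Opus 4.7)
The plan is to reduce everything to a local computation. Working in a coordinate chart trivializing $L$, where $s=f\cdot e$ and $e$ is a local holomorphic frame, the identity to establish becomes
\begin{equation*}
\frac{i}{\pi}\partial\cl{\partial}\log\abs{f} = Z_f-P_f
\end{equation*}
as $(1,1)$-currents on an open $U\subset\C^n$. This reformulation is frame-independent: a change of local frame replaces $f$ by $f\cdot g$ with $g$ a non-vanishing holomorphic function, and $\log\abs{g}=\Re\log g$ is pluriharmonic, so $\partial\cl{\partial}\log\abs{g}\equiv 0$ and neither side of the identity is affected.

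First I would factor $f$ in a neighborhood of an arbitrary point of the zero/pole locus, writing $f=u\cdot\prod_j h_j^{m_j}$ with $u$ non-vanishing holomorphic, each $h_j$ an irreducible local defining equation of a component of the divisor, and $m_j\in\mathbb Z$ the corresponding multiplicity. The pluriharmonicity observation above kills the $\log\abs{u}$ contribution, and by linearity the problem is reduced to the single-factor statement $\frac{i}{\pi}\partial\cl{\partial}\log\abs{h}=[V(h)]$ for an irreducible holomorphic $h$.

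Next I would verify this at any smooth point of $V(h)$. There $dh\neq 0$, so I can choose holomorphic coordinates $(z_1,\ldots,z_n)$ with $h=z_1$. Pairing against a compactly supported test $(n-1,n-1)$-form and applying Fubini in the transverse coordinates $z_2,\ldots,z_n$ collapses the claim to the one-variable distributional identity
\begin{equation*}
\frac{i}{\pi}\,\partial_{z_1}\cl{\partial}_{z_1}\log\abs{z_1} = \delta_0,
\end{equation*}
which is equivalent to $\Delta\log\abs{z}=2\pi\delta_0$ on $\mathbb R^2$. This I would prove in the standard way by excising a disk $\abs{z_1}<\epsilon$, applying Green's identity on the complement (where $\log\abs{z_1}$ is harmonic), and letting $\epsilon\to 0$; only the boundary integral against $\partial_\nu\log\abs{z_1}$ on the shrinking circle survives and produces the expected point mass.

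The main obstacle will be extending this identity across the singular locus of $V(h)$, which is an analytic subset of complex codimension at least two in $U$. The difference of the two closed $(1,1)$-currents is then supported on a codimension $\geq 2$ analytic set, and a standard support theorem for normal $(1,1)$-currents forces the difference to vanish, closing the argument. In the one-dimensional setting that governs the rest of the paper ($M=\C P^1$, where divisors are discrete points) this last step is vacuous: the proof reduces entirely to applying the planar Green's identity calculation above at each zero and pole with the appropriate signed multiplicity, and no extension argument is needed.
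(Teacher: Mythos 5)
The paper does not actually prove this lemma: Lemma~\ref{L:PLL} is stated and cited as a ``well-known result'' (it is the Poincar\'e--Lelong formula), and the proof environment that follows it belongs to Lemma~\ref{L:Zero Expected Density}, for which the Poincar\'e--Lelong identity is taken as an input. So there is no in-paper proof to compare against, and your task here is really to reconstruct the standard argument. Your reconstruction is correct and follows the canonical path: observe that $\log\abs{g}$ is pluriharmonic for non-vanishing holomorphic $g$ so the local expression is frame-independent; factor $f$ in the local ring (a UFD) as a unit times powers of irreducibles; verify the identity at a smooth point of $V(h)$ by straightening to $h=z_1$, slicing via Fubini, and invoking $\Delta\log\abs{z_1}=2\pi\delta_0$ via Green's formula on an excised disk; and handle the codimension-$\geq 2$ singular locus by the support theorem for normal closed $(1,1)$-currents (both $\frac{i}{\pi}\partial\cl{\partial}\log\abs{h}$ and $[V(h)]$ are positive closed currents of order $0$, so their difference is normal and supported on a set of too-small dimension). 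The normalization $\frac{i}{\pi}\partial\cl{\partial}\log\abs{z_1}=\delta_0$ does match, since $\partial_{z}\cl{\partial}_{z}=\frac14\Delta$ and $dz\wedge d\cl{z}=-2i\,dx\wedge dy$. Your closing remark is also apt: for $M=\C P^1$ the divisor is discrete, so the singular-locus step is vacuous and the proof is just the one-variable Green's identity applied at each zero and pole with sign given by the multiplicity.
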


\begin{proof}[Proof of Lemma \ref{L:Zero Expected Density}]
To prove (\ref{E:General Expected Zeros Global}), it is enough to verify (\ref{E:General Expected Zeros Local}) as for any local frame $e_L$ of $L,$ $\w_h$ is given locally by $\frac{i}{2\pi}\partial \cl{\partial}\log \norm{e_L}_h^{-2},$ making 
\[\frac{i}{2\pi}\partial \cl{\partial}\log \norm{\Pi(z,z)}\cdot \norm{e_L}_h^{-2}=\frac{i}{2\pi}\partial \cl{\partial}\log\sum_{j=1}^J \abs{\sigma_j(z)}^2\]
the local expression for
$$\frac{i}{2\pi}\partial \cl{\partial}\log \norm{\Pi_N(z,z)}_h + \w_h(z).$$
We will abbreviate $s=\inprod{a}{\Sigma}=\inprod{a}{\sigma}\cdot e_L,$ where
$$a=[a_0,\ldots,a_J],~~\Sigma=[\Sigma_0,\ldots,\Sigma_J],~~\sigma=[\sigma_0,\ldots,\sigma_J].$$
For any smooth test function $\psi,$ we apply the Poincare-Lelong formula to write
\begin{align*}
\left(\E{Z_s-P_s},\psi\right) &= \frac{i}{\pi}\E{\left(\log \abs{\inprod{a}{\sigma}},\partial \cl{\partial} \psi\right)}\\
                              &= \frac{i}{2\pi}\left(\log \sum_{j=0}^N \abs{\sigma_j}^2,\partial \cl{\partial} \psi\right)+\frac{i}{\pi}\E{\left(\log \abs{\inprod{a}{u}},\partial \cl{\partial} \psi\right)}.
\end{align*}
We've set 
$$u(z)=\frac{\sigma(z)}{\sum_{j=0}^N \abs{\sigma_j}^2},$$
a unit vector at all but finitely many points. The second term vanishes due to the unitary invariance of guassian measure. Indeed, as in Section 3.2 of \cite{Quantum}, we write the second term as
\begin{align*}
\frac{i}{\pi}\int_{\C^{N+1}}\left[ \int_{\C P^1}\log \abs{\inprod{a}{u(z)}}\partial_z \cl{\partial}_z \psi(w) dz d\cl{z}\right]d\gamma(a),
\end{align*}
where $d\gamma(a)=\frac{1}{\pi^{J+1} }e^{-\norm{a}^2}da$ is the Gaussian density on $\C^{J+1}$. It is straight-forward to check that the integrand is in $L^1,$ allowing us to change the order of integration. For almost every $z\in \C P^1,$ we have that $\inprod{a}{u(z)}\stackrel{\curly D}{=}a_1,$ a standard normal random variable on $\C.$ The integral 
\begin{align*}
\int_{\C^{N+1}}\log \abs{\inprod{a}{u(z)}} d\gamma(a)
\end{align*}
is therefore a universal constant independent of $z$ and is killed by the operator $\partial_z \cl{\partial}_z.$
\end{proof}

\noindent We have the following
\begin{corollary}\label{C:Crit Expected Density} Let $p_N$ be a degree $N$ polynomial drawn from the Hermitian Gaussian Ensemble corresponding to a smooth positive Hermitian metric $h$ on $\mathcal O(1).$ Let $\w_h$ denote the first chern class of $(\mathcal O(1),h).$ Write $C_{p_N}$ for the current of integration over the critical point set of $p_N.$ For any $\xi \in \C P^1$ and any non-vanishing holomorphic vector field $V$ in a neighbhorhood of $\xi,$ we have 
\begin{equation}
  \label{E:Crit Expected Value I}
\E{C_{p_N}}(\xi)=\frac{i}{2\pi}\partial \cl{\partial} \log \left[\norm{\nabla_V^{z_0}\otimes\nabla_{\cl{V}}^{z_0}\Pi_N(\xi,\xi)}_h\right]+ N\cdot \w_h(\xi) + \delta_{\infty}(\xi) .
\end{equation}
Consequently, $\E{C_{p_N}}$ is a smooth $(1,1)-$form.
\end{corollary}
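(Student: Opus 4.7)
The plan is to deduce Corollary \ref{C:Crit Expected Density} directly from the probabilistic Poincar\'e-Lelong formula (Lemma \ref{L:Zero Expected Density}), applied not to $p_N$ itself but to its covariant derivative. Around any $\xi \in \C P^1$, I fix a non-vanishing local holomorphic vector field $V$ and consider the Gaussian random meromorphic section
\[\twiddle{p}_N := \nabla_V^{z_0} p_N = \sum_{j=0}^N a_j\, \nabla_V^{z_0} S_j\]
of $\mathcal O(N)$. Its associated Sz\"ego kernel is $\twiddle{\Pi}_N(z,w) := \nabla_V^{z_0}\otimes \cl{\nabla_V^{z_0}}\,\Pi_N(z,w)$, whose norm on the diagonal is independent of the choice of $V$. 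Applying Lemma \ref{L:Zero Expected Density} to $\twiddle{p}_N$ and using $\w_{h^N} = N\cdot \w_h$ gives
\[\E{Z_{\twiddle{p}_N}-P_{\twiddle{p}_N}} = \frac{i}{2\pi}\partial\cl{\partial}\log\norm{\twiddle{\Pi}_N(\xi,\xi)}_h + N\cdot \w_h(\xi).\]

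Next, I identify the zero and pole currents of $\twiddle{p}_N$. Since $V$ is non-vanishing in a neighborhood of $\xi$, the zeros of $\nabla_V^{z_0} p_N$ coincide there with the zeros of the meromorphic one-form $\nabla^{z_0} p_N$, which by definition are the holomorphic critical points of $p_N$; hence $Z_{\twiddle{p}_N}$ equals $C_{p_N}$ as currents. For the poles, $\nabla^{z_0}$ is a meromorphic connection on $\mathcal O(N)$ that is holomorphic on $\C P^1\backslash\set{\infty}$ and has a simple pole at $\infty$ by (\ref{E:Pole at Infinity}). Since $p_N(\infty)\neq 0$ with probability one, this pole is not cancelled, so $P_{\twiddle{p}_N} = \delta_\infty$ almost surely. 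Taking expectations and rearranging yields exactly (\ref{E:Crit Expected Value I}).

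It remains to verify the smoothness assertion. Away from infinity, $\norm{\twiddle{\Pi}_N(\xi,\xi)}_h = \E{\norm{\nabla_V^{z_0} p_N(\xi)}_{h^N}^2}$ is a smooth, strictly positive variance (the Gaussian field $\nabla_V^{z_0} p_N$ is non-degenerate at every point of $\C P^1\backslash\set{\infty}$), so the $\partial\cl{\partial}\log$ term is a smooth $(1,1)$-form and the $\delta_\infty$ term does not contribute. Near infinity, equation (\ref{E:Lift to Diag at Infinity}) of Corollary \ref{C:Local Connection Lift} shows that $\norm{\twiddle{\Pi}_N(\xi,\xi)}_h$ blows up like $\abs{\psi}^{-2}$ in a coordinate $\psi$ vanishing at $\infty$, so the $\partial\cl{\partial}\log$ contribution equals $-\delta_\infty$ plus a smooth form, and this $-\delta_\infty$ exactly cancels the explicit $+\delta_\infty$ in (\ref{E:Crit Expected Value I}).

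The main non-routine step is the pole analysis at infinity; once it is correctly accounted for, the identity is a formal consequence of Lemma \ref{L:Zero Expected Density}, and the smoothness assertion reduces to checking that the singular contributions on the right-hand side mutually cancel.
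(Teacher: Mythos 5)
Your argument follows the paper's proof essentially line for line: both apply the probabilistic Poincar\'e--Lelong formula (Lemma~\ref{L:Zero Expected Density}) to the meromorphic section $\nabla_V^{z_0} p_N$, identify its zeros with $C_{p_N}$ and its pole current with $\delta_\infty$ via (\ref{E:Pole at Infinity}), and then cancel the $\delta_\infty$ against the singularity of $\partial\cl{\partial}\log\norm{\twiddle\Pi_N}$ at $\infty$. The one imprecision is your citation of the \emph{scaled} asymptotic (\ref{E:Lift to Diag at Infinity}), which only controls the leading order in $N$; for smoothness at a fixed $N$ you need the exact factorization $\norm{\twiddle\Pi_N(\xi,\xi)}_{h^N}=\abs{\psi}^{-2}\cdot g_N(\xi)$ with $g_N$ smooth and positive, which comes from (\ref{E:Pole at Infinity}) (or the unscaled (\ref{E:Lift on Diag Infinity})) rather than from Corollary~\ref{C:Local Connection Lift}.
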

\begin{proof}
By definition, we may write locally
\[C_{p_N}=Z_{\nabla_V^{z_0} p_N},\]
where $V$ is any non-vanishing holomorphic vector field. Recall from Section \ref{S:Connection} that $\nabla_V^{z_0} p_N$ is holomorphic except at $\infty$ where it has a simple pole almost surely. Therefore, denoting by $P_s$ the current of integration over the poles for a section $s\in H_{mer}^0(\C P^1,\mathcal O(N)),$ we have 
\[\E{P_{\nabla_V^{z_0} p_N}}=\delta_{\infty}.\]
Combining the Poincare-Lelong formula with Lemma \ref{L:Zero Expected Density} applied to $M=\C P^1,$ $L=(\mathcal O(N),h)$,  $\w=w_{h^N}$ and the Gaussian random section $p_N$ proves (\ref{E:Crit Expected Value I}). That $\E{C_{p_N}}$ is a smooth $(1,1)-$form  away from infinity is clear from (\ref{E:Crit Expected Value I}). To check that $\E{C_{p_N}}$ is smooth at $\infty$ we choose $\set{S_j}_{j=0}^N,$ an orthonormal basis of $H_{hol}^0(\C P^1,\mathcal O(N))$ with respect to the inner product (\ref{E:Inner Product}) and compute in the standard holomorphic coorinate centered at $\infty.$ Relative to the usual frame $z_1$ of $\mathcal O(1)$ over $\C P^1\backslash{\set{0}},$ we may write $S_j=f_j\cdot z_1^N.$ By (\ref{E:Pole at Infinity}),
\[\norm{\nabla^{z_0}\otimes \cl{\nabla^{z_0}}\Pi_N(z,w)}_{h^N}=N^2\frac{\norm{z_1}_h^{2N}}{\abs{w}^2}\cdot \sum_{j=0}^N \abs{\Di{w}{f_j}(w)}^2.\]
Note that $\sum_{j=0}^N \abs{\Di{w}{f_j}(w)}^2>0.$ Writing 
\[\delta_{\infty}(w)=\frac{i}{\pi}\partial_w \cl{\partial}_w\log \abs{w}^2\quad \text{and}\quad N\cdot \w_h(w)=\frac{i}{2\pi}\partial_w\cl{\partial}_w\log \norm{z_1(w)}_h^{-2N}\]
in (\ref{E:Crit Expected Value I}) shows that $\E{C_{p_N}}$ is smooth at infinity. 
\end{proof}

\section{Expected Density of Critical Points: Proof of Theorem \ref{T:EZ}}\label{S:Proof 1pt} 
\noindent Throughout, we denote by $p_N$ a degree $N$ polynomial drawn from the Hermitian Gaussian Ensemble corresponding to a smooth positive Hermitian metric $h$ on the line bundle $\mathcal O(1)\twoheadrightarrow \C P^1$ (see Section \ref{S:Def}). We continue to write $\nabla^{z_0}$ for both the meromorphic connection on $\mathcal O(1)$ that extends the euclidean derivative (see Section \ref{S:Connection}) and the connections $\left(\nabla^{z_0}\right)^{\otimes N}$ it induces on $\mathcal O(N)\twoheadrightarrow \C P^1.$ 
\begin{proof}[Proof of Theorem \ref{T:EZ}]
Since $\E{C_{p_N}}$ is smooth by Corollary \ref{C:Crit Expected Density}, to verify the asymptotics for $\frac{1}{N-1}\E{C_{p_N}}$ given in equation (\ref{E:Crit Global One Point}) it suffices to show that for each $\psi \in C_c^2(\C P^1\backslash{\set{\infty}})$ we have
\begin{equation}
\label{E:Error}
\int_{\C P^1\backslash{\set{\infty}}} \psi(z)\E{C_{p_N}}(z)= \int_{\C P^1\backslash{\set{\infty}}} \psi(z)\cdot N \w_h(z) +O(1)
\end{equation}
with the implied constant independent of $N.$ We write $z$ for the standard holomorphic coordinate on $\C P^1\backslash{\set{\infty}}$ and $z_0$ for the standard frame of $\mathcal O(1)$ over $\C P^1\backslash{\set{\infty}}$ as in Section \ref{S:CP1}. From equation (\ref{E:Lift on Diag}) and the asymptotic expansion (\ref{E:Szego On-Diagonal}), we conclude that
\[\norm{\nabla_{\Di{z}{}}^{z_0}\otimes \cl{\nabla_{\Di{z}{}}^{z_0}}\Pi_N(z,z)}_{h^N}=\abs{\frac{\partial^2}{\partial z\partial \cl{z}}\phi_{z_0}(z) + N\abs{\Di{z}{\phi_{z_0}}(z)}^2+O(N^{-1})}.\]
The assumption that $h$ is a postive metric means that $\frac{\partial^2}{\partial z\partial \cl{z}}\phi_{z_0}(z)>0.$ So we may omit the absolute values in the previous line and use (\ref{E:Crit Expected Value I}) of Corollary \ref{C:Crit Expected Density} to write
\begin{equation}
  \label{E:Error Gen}
\E{C_{p_N}}(z)-N\cdot \w_h(z)=\frac{i}{2\pi}\partial \cl{\partial} \log \left[\frac{\partial^2}{\partial z\partial \cl{z}}\phi_{z_0}(z) + N\abs{\Di{z}{\phi_{z_0}}(z)}^2\right]+O(N^{-1}).
\end{equation}
Hence, we seek to show that
\begin{equation}
\label{E:Error II}
\int_{\C P^1\backslash{\set{\infty}}} \frac{i}{2\pi}\partial \cl{\partial} \log \left[\frac{\partial^2}{\partial z\partial \cl{z}}\phi_{z_0}(z) + N\abs{\Di{z}{\phi_{z_0}}(z)}^2\right] \psi(z)=O(1).
\end{equation}
Note that
\[\log \left[\frac{\partial^2}{\partial z\partial \cl{z}}\phi_{z_0}(z) + N\abs{\Di{z}{\phi_{z_0}}}^2\right]\geq \log \left(\inf_{\C P^1} \frac{\partial^2}{\partial z\partial \cl{z}}\phi_{z_0}(z)\right)=:c.\]
Hence, integrating by parts, we have that (\ref{E:Error II}) is bounded below by $c\cdot \int_{\C P^1}\partial \cl{\partial}\psi.$ We also have that 
 \[\partial \cl{\partial}\log \left[\frac{\partial^2}{\partial z\partial \cl{z}}\phi_{z_0}(z) + N\abs{\Di{z}{\phi_{z_0}}}^2\right] = \partial \cl{\partial}\log \left[\abs{\Di{z}{\phi_{z_0}}}^2+O(N^{-1})\right].\]
Again integrating by parts, we see that (\ref{E:Error II}) is bounded above by $C\cdot \int \partial \cl{\partial}\psi,$ with
\[C:=\sup_{supp(\psi)}\log \abs{\Di{z}{\phi_{z_0}}}.\]
This completes the proof of (\ref{E:Crit Global One Point}).
 
To prove the local asymptotics (\ref{E:Local 1PF Crit}) and (\ref{E:Local 1PF NonCrit}), we fix $\xi \in \C P^1$ and take a $N^{-1/2}-$scale normal coordinate $w$ centered at $\xi.$ As noted in (\ref{E:Raison D'etre}),
\begin{equation}
  \label{E:Scaled Form}
N\cdot \w_h(z)=\frac{i}{2\pi}\partial \cl{\partial} \abs{z}^2 +O(N^{-1/2}).  
\end{equation}
Next, combining the asymptotic expansion (\ref{E:Szego On-Diagonal}) for $\Pi_N(w,w)$ with the lift of $\nabla^{z_0}\otimes \nabla^{z_0}$ to the diagonal of $X\x X$ given in (\ref{E:Lift to Diag at NonCrit}), we see that if $d\phi_{z_0}(\xi)\neq 0$ or $\xi=\infty,$ then 
\[\norm{\nabla_{\Di{z}{}}^{z_0}\otimes \cl{\nabla_{\Di{z}{}}^{z_0}}\Pi_N(w,w)}_{h^N}= N\cdot \left(\abs{\Di{w}{\phi_{z_0}}\bigg|_{\xi}}^2+O(N^{-1/2})\right).\]
Similarly, if $d\phi_{z_0}(\xi)=0,$ then 
\[\norm{\nabla_{\Di{z}{}}^{z_0}\otimes \cl{\nabla_{\Di{z}{}}^{z_0}}\Pi_N(w,w)}_{h^N}= 1+\abs{\DDi{z}{\phi_{z_0}}\bigg|_{\xi}\cdot w+\cl{w}}^2+O(N^{-1/2}).\]
The local asymptotics (\ref{E:Local 1PF Crit}) and (\ref{E:Local 1PF NonCrit}) now follow by substituting the previous two expressions for $\norm{\nabla_V^{z_0}\otimes \cl{\nabla_V^{z_0}} \Pi_N}$ into (\ref{E:Error Gen}) and using (\ref{E:Scaled Form}).
\end{proof}

\section{Conditional Density of Critical Points: Proof of Theorem \ref{T:Crit Given Zero One Point Function}}\label{S:Proof Conditional 1pt}
\noindent We mimic the proofs of Proposition 3.10 and Theorem 1.1 (in the $k=1$ case) in \cite{Conditional}. Let $h$ be a smooth positive Hermitian metric on $\mathcal O(1)\twoheadrightarrow \C P^1.$ Fix $\xi\in \C P^1$ and define $H_N^{\xi}:=\setst{p_N\in H_N}{p_N(z_0)=0}.$ As explained in Section $3$ of \cite{Conditional}, the distribution of $p_N$ conditional on $p_N(\xi)=0$ is the restriction of the Gaussian measure on $H_{hol}^0(\C P^1, \mathcal O(N))$ according to which $p_N$ is distributed to $H_N^{\xi}.$
  
\begin{proof}[Of Theorem \ref{T:Crit Given Zero One Point Function}]
The key to proving the local and global asymptotics is the following result, which is the analog of Proposition 3.10 in \cite{Conditional}.
\begin{Lem}\label{L:Conditional Szego I} For each $N,$
  \begin{equation}
    \label{E:Conditional Szego}
\norm{\nabla_V^{z_0}\otimes \cl{\nabla_V^{z_0}}\Pi_N^{\xi}(w,w)}_{h^N}=\norm{\nabla_V^{z_0}\otimes \cl{\nabla_V^{z_0}}\Pi_N(w,w)}_{h^N}\left(1-\twiddle{P}_N(\xi,w)^2\right),    
  \end{equation}
where
$$\twiddle{P}_N(\xi,w)=\frac{\norm{1\otimes \cl{\nabla_V^{z_0}}\Pi_N(\xi,w)}_h}{\norm{\nabla_V^{z_0}\otimes \cl{\nabla_V^{z_0}}\Pi_N(w,w)}_h^{1/2}\cdot \norm{\Pi_N(\xi,\xi)}_h^{1/2}}$$
is the correlation between $p_N(\xi)$ and $\nabla_V^{z_0}p_N(w)$ (cf Section \ref{S:BS}). 
\begin{proof}
Choose $e_{\xi}^*\in \mathcal O(N)_{\xi}^*$ a unit dual vector. Define the ``coherent state'' at $p_N(\xi)=0$
$$\Psi_N^{\xi}(w):=\frac{1}{\norm{\Pi_N(\xi,\xi)}^{1/2}}\iota_{e_{\xi}^*}\Pi_N(\xi,w)\in H_N,$$
where $\iota$ is the contraction operator. More explicitly, if $\set{S_N^j,~j=0,\ldots,N}$ is any orthonormal basis for $H_N$ with
\[S_N^j(w)=f_j(w)\cdot e_N(w)\]
relative to a local holomorhpic frame $e_N$ for $\mathcal O(N),$ then 
\[\Psi_H^{\xi}(w)=\frac{1}{\left(\sum_{j=0}^N \abs{f_j(\xi)}^2\right)^{1/2}}\sum_{j=0}^N \cl{f_j(\xi)}\cdot S_j(w).\] 
Note that $\inprod{s}{\Psi_N^{\xi}}=0$ for each $s\in H_N^{\xi}.$ Hence, $\Psi_{N}^{z_0}$ spans ${H_N^{z_0}}^{\perp}.$ Further, $\norm{\Psi_N^{\xi}}_h=1.$ So  
$$\norm{\nabla_V^{z_0}\otimes \cl{\nabla_V^{z_0}}\Pi_N^{\xi}(w,w)}_h=\norm{\nabla_V^{z_0}\otimes \cl{\nabla_V^{z_0}}\Pi_N(w,w)}_h-\norm{\nabla_V^{z_0}\Psi_N^{\xi}(w)}_h^2.$$
Noting that
\[\norm{\nabla_V^{z_0} \Psi_N^{\xi}(w)}_h^2=\frac{\norm{1\otimes \cl{\nabla_V^{z_0}}\Pi_N(\xi,w)}_{h^N}^2}{\norm{\Pi_N(\xi,\xi)}_{h^N}}\]
completes the proof. 
\end{proof}
\end{Lem}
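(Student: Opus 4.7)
The plan is to exploit the fact that $H_N^\xi \subset H_N$ is a codimension-one subspace, so that $\Pi_N - \Pi_N^\xi$ is the reproducing kernel of the one-dimensional orthogonal complement $(H_N^\xi)^\perp$. Once this rank-one piece is identified explicitly as a ``coherent state'' at $\xi$, the claimed identity is an almost formal computation.

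First I would verify that evaluation $\mathrm{ev}_\xi : H_N \to \mathcal{O}(N)_\xi$ is surjective (immediate for $N \geq 1$ since $\mathcal{O}(N)$ is very ample on $\mathbb{CP}^1$), so $H_N^\xi = \ker \mathrm{ev}_\xi$ has codimension one and $H_N = H_N^\xi \oplus (H_N^\xi)^\perp$ with the second summand one-dimensional. The Riesz representer of the evaluation functional paired against a fixed unit dual vector $e_\xi^* \in \mathcal{O}(N)_\xi^*$ is the natural candidate for the unit vector spanning $(H_N^\xi)^\perp$; up to normalization it is $\iota_{e_\xi^*}\Pi_N(\xi,\cdot)$. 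I would set
\[\Psi_N^\xi(w) := \frac{\iota_{e_\xi^*}\Pi_N(\xi,w)}{\norm{\Pi_N(\xi,\xi)}_{h^N}^{1/2}}\]
and check two things in any orthonormal basis $\{S_j\}$ of $H_N$: that $\langle s, \Psi_N^\xi\rangle_h = 0$ for every $s \in H_N^\xi$ (which follows from the reproducing identity $\langle s, \iota_{e_\xi^*}\Pi_N(\xi,\cdot)\rangle_h = \overline{e_\xi^*(s(\xi))}$) and that $\norm{\Psi_N^\xi}_h = 1$ (a direct calculation from $\Pi_N(\xi,\xi) = \sum_j S_j(\xi)\otimes \overline{S_j(\xi)}$). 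These two facts together with dimension counting force
\[\Pi_N(z,w) = \Pi_N^\xi(z,w) + \Psi_N^\xi(z)\otimes \overline{\Psi_N^\xi(w)}.\]

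The rest is bookkeeping. Applying $\nabla_V^{z_0}\otimes \overline{\nabla_V^{z_0}}$ to both sides of the kernel decomposition, restricting to the diagonal $z=w$, and taking pointwise $h^N$-norms yields
\[\norm{\nabla_V^{z_0}\otimes \overline{\nabla_V^{z_0}}\Pi_N^\xi(w,w)}_{h^N} = \norm{\nabla_V^{z_0}\otimes \overline{\nabla_V^{z_0}}\Pi_N(w,w)}_{h^N} - \norm{\nabla_V^{z_0}\Psi_N^\xi(w)}_{h^N}^2,\]
because the rank-one term $\Psi_N^\xi\otimes \overline{\Psi_N^\xi}$ differentiates to $\nabla_V^{z_0}\Psi_N^\xi \otimes \overline{\nabla_V^{z_0}\Psi_N^\xi}$. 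Plugging in the definition of $\Psi_N^\xi$, the subtracted term equals
\[\frac{\norm{1\otimes \overline{\nabla_V^{z_0}}\Pi_N(\xi,w)}_{h^N}^2}{\norm{\Pi_N(\xi,\xi)}_{h^N}},\]
which is precisely $\twiddle{P}_N(\xi,w)^2 \cdot \norm{\nabla_V^{z_0}\otimes \overline{\nabla_V^{z_0}}\Pi_N(w,w)}_{h^N}$ by the definition of $\twiddle{P}_N$ recalled in Section \ref{S:BS}. Factoring completes the proof.

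The step that requires the most care is the identification of $\Psi_N^\xi$ as a genuine unit vector spanning $(H_N^\xi)^\perp$; this is where one must be careful about the tensorial conventions in contracting $\Pi_N(\xi,w) \in \mathcal{O}(N)_\xi \otimes \overline{\mathcal{O}(N)_w}$ against $e_\xi^*$ to produce an element of $H_N$, and also where positivity of $h$ is used to ensure $\norm{\Pi_N(\xi,\xi)}_{h^N} > 0$ so that the normalization is well-defined. After that, the proof is just the reproducing-kernel Pythagorean identity differentiated on the diagonal.
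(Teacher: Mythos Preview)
Your proposal is correct and follows essentially the same approach as the paper: define the normalized coherent state $\Psi_N^\xi$ spanning $(H_N^\xi)^\perp$, use it to write $\Pi_N = \Pi_N^\xi + \Psi_N^\xi\otimes\overline{\Psi_N^\xi}$, and then differentiate and read off the identity. You are slightly more explicit than the paper about the surjectivity of $\mathrm{ev}_\xi$ and about writing the kernel decomposition as an intermediate step, but the argument is the same.
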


We will first prove the local scaling asymptotics (\ref{E:Conditional at Diff Pt})-(\ref{E:Crit Given Zero Scaled II}). Fix $\zeta \in \C P^1$ an a $N^{-1/2}-$scaled normal coordinate centered at $\zeta.$ Combining Corollary \ref{C:Crit Expected Density} and Lemma \ref{L:Conditional Szego I}, we may write 
\begin{equation}
  \label{E:Cond Diff}
\E{C_{p_N}|p_N(\xi)=0}(w)-\E{C_{p_N}}(w)=\frac{i}{2\pi}\partial\cl{\partial}\log (1-\twiddle{P}_N(\xi,w)).  
\end{equation}
If $\zeta\neq \xi,$ then by (\ref{E:Far Off-Diag}) $\twiddle{P}_N(\xi,w)=O(N^{-k})$ for any $k\geq 1,$ confirming (\ref{E:Conditional at Diff Pt}). Next, if $\zeta=\xi,$ then sustituting (\ref{E:Near Off-Diag}) and (\ref{E:Near Off-Diag Infinity}) into (\ref{E:Cond Diff}) proves (\ref{E:Crit Given Zero Scaled I}) and (\ref{E:Crit Given Zero Scaled II}).

To prove the global asymptotics (\ref{E:Global OPF}), we fix $\psi\in C^2(\C P^1)$ and seek to show that
\[\int_{\C P^1} \psi(w) \left[\E{C_{p_N}|p_N(\xi)=0}(w)-\E{C_{p_N}}(w)\right]= O(1).\]
Using (\ref{E:Cond Diff}), this is equivalent to 
\begin{equation}
  \label{E:Conditional Error}
\int_{\C P^1}\psi(w)\frac{i}{2\pi}\partial \cl{\partial}\log (1-\twiddle{P}_N(\xi,w)^2)=O(1).  
\end{equation}
Recall from (\ref{E:Far Off-Diag}) that $\twiddle{P}_N(\xi,w)=O(N^{-k})$ for $\abs{w-\xi}\geq \left(\frac{\log N}{N}\right)^{1/2}.$ Hence, taking a K\"ahler normal coordinate centered at $\xi$, we may write (\ref{E:Conditional Error}) as
\[\int_{\abs{\xi-w}\leq \left(\frac{\log N}{N}\right)^{1/2}}\psi(w)\frac{i}{2\pi}\partial \cl{\partial}\log (1-\twiddle{P}_N(\xi,w)^2)+O(N^{-k})\]
The local asymptotics (\ref{E:Crit Given Zero Scaled I}) and (\ref{E:Crit Given Zero Scaled II}) that we just proved show in particular that the near-diagongal integral is $O(1).$ This concludes the proof. 
\end{proof}

\section{Joint Density of Zeros and Critical Points: Proof of Theorem \ref{T:Crit Two Point Function}}\label{S:Proof 2pt}
\noindent Fix $N\geq 1$ and $h,$ a smooth positive Hermitian metric on $\mathcal O(N)\twoheadrightarrow \C P^1.$ Let $p_N\in H_{hol}^0(\C P^1, \mathcal O(N))$ be drawn from the Hermitian Gaussian ensemble corresponding to $h.$ We seek to compute the local and global asymptotics of the covariance current between the zeros and critical points of $p_N:$
\[\Cov_N(z,w)=\E{Z_{p_N}\wedge C_{p_N}}(z,w)-\E{Z_{p_N}}(z)\wedge \E{C_{p_N}}(w).\]
As in \cite{PLL, NV}, we introduce
\[G(t):=\frac{\gamma^2}{4}-\frac{1}{4}\int_0^{t^2}\frac{\log(1-s)}{s}ds,\quad 0\leq t <1\]
and recall the following 
\begin{Lem}[Lemma 3.3 from \cite{NV}]\label{L:G} Let $a$ be a standard Gaussian random vector in $\C^{N+1}$ and let $u,v\in C^{N+1}$ denote unit vectors. Then 
\[\E{\log\abs{\inprod{a}{u}}\log\abs{\inprod{a}{v}}}=G(\abs{\inprod{u}{v}}),\]
where $\inprod{\cdot}{\cdot}$ is the usual Hermitian inner product on $\C^{N+1}.$
\end{Lem}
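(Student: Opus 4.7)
The plan is to reduce the calculation to two dimensions by unitary invariance of the standard Gaussian measure on $\C^{N+1},$ then extract the $t$-dependence via a Frullani representation of $\log\abs{\cdot}$ paired with the complex Gaussian Laplace transform.

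First I would set $X := \inprod{a}{u}$ and $Y := \inprod{a}{v}.$ This pair is jointly complex Gaussian with $\E{\abs{X}^2} = \E{\abs{Y}^2} = 1$ and $\E{X\bar Y} = \inprod{u}{v}.$ Since $\abs{Y}$ is invariant under multiplication of $v$ by a unimodular scalar, I may replace $v$ by such a multiple and assume $\E{X\bar Y} = t := \abs{\inprod{u}{v}} \in [0,1].$ Thus it suffices to evaluate $F(t) := \E{\log\abs{X}\log\abs{Y}}$ for such a pair and show $F = G.$

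Next I would invoke the Frullani identity $\log r = \int_0^\infty (e^{-s}-e^{-sr})\,s^{-1}\,ds$ (valid for $r>0$) with $r=\abs{X}^2$ and $r=\abs{Y}^2,$ yielding
\[\log\abs{X}\log\abs{Y} = \frac{1}{4}\int_0^\infty\!\!\int_0^\infty \frac{(e^{-s}-e^{-s\abs{X}^2})(e^{-u}-e^{-u\abs{Y}^2})}{su}\,ds\,du.\]
After taking expectations the joint Laplace transform $\E{e^{-s\abs{X}^2-u\abs{Y}^2}}$ is a $2\times 2$ complex Gaussian integral, computable from the covariance matrix, equal to $\bigl[(1+s)(1+u) - t^2 su\bigr]^{-1}.$ The punchline is then to differentiate $F$ with respect to $T := t^2.$ Only the term coming from $\E{e^{-s\abs{X}^2-u\abs{Y}^2}}$ depends on $T,$ so
\[\frac{dF}{dT} = \frac{1}{4}\int_0^\infty\!\!\int_0^\infty \frac{ds\,du}{\bigl((1+s)(1+u) - T su\bigr)^2}.\]
Integrating in $u$ first (the integrand has the form $(A+Bu)^{-2}$) gives $\bigl[(1+s)(1+(1-T)s)\bigr]^{-1};$ partial fractions then evaluate the $s$-integral to $-T^{-1}\log(1-T).$ Combined with the boundary value $F(0) = (\E{\log\abs{X}})^2 = \gamma^2/4,$ obtained from $\abs{X}^2 \sim \mathrm{Exp}(1)$ and $\int_0^\infty (\log r)\,e^{-r}\,dr = -\gamma,$ integrating back in $T$ from $0$ to $t^2$ produces $F(t) = G(t).$

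The main technical point is justifying Fubini and differentiation under the double integral. I would handle this by keeping the four terms in the expanded integrand grouped: a short Taylor expansion at the origin shows their combined numerator is $O(su),$ cancelling the $1/(su)$ prefactor, while the tail is controlled by exponential decay of the Gaussian Laplace transform. This makes the integrand absolutely integrable uniformly in $T\in[0,1-\delta]$ for any $\delta>0,$ and dominated convergence then legitimizes all the remaining analytic manipulations.
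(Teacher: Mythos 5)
The paper itself does not prove this lemma: it states it with the attribution ``Lemma 3.3 from \cite{NV}'' and takes it as a black box. So there is no in-paper proof to compare against; what follows is an assessment of your argument on its own terms.

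Your proof is correct and self-contained. The reduction to the bivariate complex Gaussian $(X,Y)$ with real correlation $t=\abs{\inprod{u}{v}}$ is standard; the Frullani representation turns the product of logarithms into a double integral of exponentials, and the determinant formula $\E{e^{-s\abs{X}^2-u\abs{Y}^2}}=\det(I+\Sigma D)^{-1}=\bigl[(1+s)(1+u)-t^2 su\bigr]^{-1}$ is the right joint Laplace transform. Differentiating in $T=t^2$ kills the $1/(su)$ singularity, the $u$-integral gives $\bigl[(1+s)(1+(1-T)s)\bigr]^{-1}$, and the divergent pieces in the partial-fraction expansion cancel to give $-T^{-1}\log(1-T)$, matching $G'(t)$ under the change of variable $T=t^2$. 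The boundary value $F(0)=(-\gamma/2)^2=\gamma^2/4$ from $\abs{X}^2\sim\mathrm{Exp}(1)$ is right; incidentally, this also confirms that the formula for $G$ in Section~\ref{S:Proof 2pt} of the paper (with $\gamma^2/4$) is the intended one and the $\gamma/4$ in Section~\ref{S:Formal} is a typo. Your remarks about justifying Fubini and differentiation under the integral are the right technical points: the four-term combination has an $O(su)$ numerator near the origin and exponential decay in the tails, uniformly for $T$ bounded away from $1$, so dominated convergence applies. Compared with the approach one typically sees in the literature (expanding the modified-Bessel-function density of a correlated complex Gaussian pair into a power series in $t^2$ and integrating term by term), your Laplace-transform-and-differentiate route is arguably cleaner and avoids the Bessel machinery entirely.
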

\noindent Our first step, Lemma \ref{L:Cov Formula}, gives in the terminology of \cite{NV} a pluri bi-potential for $\Cov_N.$ This result will be the starting point for proving both the local and global asymptotics of $\Cov_N.$
\begin{Lem}\label{L:Cov Formula}
Let $U\subseteq \C P^1.$ In coordinates on $U,$ we have
\begin{equation}
  \label{E:Remainder}
\Cov_N(z,w)=\left(\frac{i}{\pi}\partial_z \cl{\partial}_z\right)\left(\frac{i}{\pi} \partial_w \cl{\partial}_w\right) G(\twiddle{P}_N(z,w)),  
\end{equation}
where $\twiddle{P}_N$ is the absolute value of the correlation kernel between zeros and critical points:
\[\twiddle{P}_N(z,w)=\frac{\norm{1\otimes \cl{\nabla_V^{z_0}} \Pi_N(z,w)}_{h^N}}{\sqrt{\norm{\Pi_N(z,z)}_{h^N}\norm{\nabla_V^{z_0}\otimes\cl{\nabla_V^{z_0}} \Pi_N(w,w)}_{h^N}}}.\]
As elsewhere, $V$ is any auxiliary non-vanishing local holomorphic vector field on which $\twiddle{P}_N$ does not depend. 
\end{Lem}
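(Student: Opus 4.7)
The plan is to apply the Poincar\'e-Lelong formula (Lemma~\ref{L:PLL}) to both $p_N$ and $\nabla^{z_0} p_N$, reducing the covariance current to a mixed expectation of logarithms of Gaussian linear functionals that can be evaluated by Lemma~\ref{L:G}. First, fix an orthonormal basis $\{S_j\}_{j=0}^N$ for $H_{hol}^0(\C P^1,\mathcal O(N))$ and a local holomorphic frame $e_N$ of $\mathcal O(N)$ on $U$, write $S_j=f_j\cdot e_N$, and collect the coefficients into $\sigma(z):=(f_0(z),\ldots,f_N(z))$ and $\tau(w):=(\nabla^{z_0}_V f_0(w),\ldots,\nabla^{z_0}_V f_N(w))$. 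Then $p_N=\inprod{a}{\sigma}\cdot e_N$ and $\nabla^{z_0}_V p_N=\inprod{a}{\tau}\cdot e_N$ as local expressions, and Poincar\'e-Lelong gives, in a chart $U\subseteq \C P^1\setminus\{\infty\}$,
\[
Z_{p_N}=\frac{i}{\pi}\partial\cl{\partial}\log\abs{\inprod{a}{\sigma}},\qquad C_{p_N}=\frac{i}{\pi}\partial\cl{\partial}\log\abs{\inprod{a}{\tau}}.
\]

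Next I would interchange expectation with $\partial_z\cl{\partial}_z\partial_w\cl{\partial}_w$, as in~\cite{NV,PLL}, and obtain
\[
\E{Z_{p_N}\wedge C_{p_N}}(z,w)=\frac{i}{\pi}\partial_z\cl{\partial}_z\frac{i}{\pi}\partial_w\cl{\partial}_w\,\E{\log\abs{\inprod{a}{\sigma(z)}}\,\log\abs{\inprod{a}{\tau(w)}}}.
\]
Set $u(z):=\sigma(z)/\norm{\sigma(z)}$ and $v(w):=\tau(w)/\norm{\tau(w)}$, and split $\log\abs{\inprod{a}{\sigma(z)}}=\log\norm{\sigma(z)}+\log\abs{\inprod{a}{u(z)}}$, and similarly for $\tau$. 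Expanding the product of the two logs produces four terms. By the unitary invariance of complex Gaussian measure, $\E{\log\abs{\inprod{a}{u(z)}}}$ and $\E{\log\abs{\inprod{a}{v(w)}}}$ are universal constants, independent of $z$ and $w$; hence the two cross terms are annihilated by one of the two $\partial\cl{\partial}$ operators. The remaining pure product $\log\norm{\sigma(z)}\log\norm{\tau(w)}$, once the bi-potential is applied, is precisely $\E{Z_{p_N}}(z)\wedge \E{C_{p_N}}(w)$ by Lemma~\ref{L:Zero Expected Density} and Corollary~\ref{C:Crit Expected Density}, and so cancels against the subtracted term in the definition of $\Cov_N$.

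What remains is $\Cov_N=\frac{i}{\pi}\partial_z\cl{\partial}_z\frac{i}{\pi}\partial_w\cl{\partial}_w\,\E{\log\abs{\inprod{a}{u(z)}}\log\abs{\inprod{a}{v(w)}}}$, and Lemma~\ref{L:G} evaluates the inner expectation as $G(\abs{\inprod{u(z)}{v(w)}})$. To conclude, I would identify $\abs{\inprod{u(z)}{v(w)}}$ with $\twiddle{P}_N(z,w)$: the numerator $\abs{\inprod{\sigma(z)}{\tau(w)}}=\abs{\sum_j f_j(z)\cl{\nabla^{z_0}_V f_j(w)}}$ is the coefficient form of $\norm{1\otimes\cl{\nabla^{z_0}_V}\Pi_N(z,w)}_{h^N}$ once the frame factor $e_N(z)\otimes \cl{e_N(w)}$ is restored together with its length in $h^N$, and the denominator likewise matches after pairing $\norm{\sigma(z)}$ with $\norm{\Pi_N(z,z)}_{h^N}^{1/2}$ and $\norm{\tau(w)}$ with $\norm{\nabla^{z_0}_V\otimes\cl{\nabla^{z_0}_V}\Pi_N(w,w)}_{h^N}^{1/2}$.

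The main obstacle I anticipate is justifying the interchange of expectation with $\partial_z\cl{\partial}_z\partial_w\cl{\partial}_w$, since $\log\abs{\inprod{a}{\sigma(z)}}$ has logarithmic singularities on the random zero set of $p_N$; this must be carried out distributionally and relies on the local $L^1$-ness, in $(z,w)$, of the Gaussian integral of $\log\abs{\inprod{a}{\sigma(z)}}\log\abs{\inprod{a}{\tau(w)}}$ against $d\gamma(a)$ on $\C^{N+1}$, exactly as in the zero-only case treated in~\cite{PLL,NV}. A secondary technical point is the behavior at $\infty$: when $U$ contains $\infty$, the simple pole of $\nabla^{z_0}$ puts a deterministic $\delta_{\infty}$ into $Z_{\nabla^{z_0}p_N}$ that is absent from $C_{p_N}$, but this contribution is nonrandom and drops out of $\Cov_N$, so the bi-potential identity persists on all of $\C P^1$ in the sense of currents.
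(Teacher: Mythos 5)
Your proposal is correct and follows essentially the same route as the paper's proof of Lemma~\ref{L:Cov Formula}: Poincar\'e–Lelong applied to $p_N$ and $\nabla_V^{z_0}p_N$ in a local frame, the four-term expansion of $\log\norm{\sigma}\log\norm{\tau}$ plus cross terms and pure fluctuation term, annihilation of the cross terms via unitary invariance of the Gaussian, identification of the pure deterministic product with $\E{Z_{p_N}}\wedge\E{C_{p_N}}$ via Lemma~\ref{L:Zero Expected Density} and Corollary~\ref{C:Crit Expected Density}, and evaluation of the remaining term by Lemma~\ref{L:G} together with the identification $\abs{\inprod{u(z)}{v(w)}}=\twiddle{P}_N(z,w)$. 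Your closing remark about the deterministic $\delta_\infty$ pole contribution cancelling inside $\Cov_N$ is a legitimate clarification that the paper leaves implicit.
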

\begin{proof}
Denote by $e$ a frame for $\mathcal O(1)$ over $U\subseteq \C P^1.$ Recall that $p_N(z)=\sum_{j=0}^N a_j S_j(z)$ for $a_j$ independent standard complex Gaussians and $\set{S_j}$ and orthonormal basis for (\ref{E:Inner Product}). We will write 
\begin{equation}
  \label{E:Abbreviation}
S_j^N = f_j \cdot e^{\otimes N},\quad \nabla_VS_j^N= g_j \cdot e^{\otimes N}  
\end{equation}
and abbreviate $a=[a_0,\ldots, a_N]$ as well as $f=[f_0,\ldots,f_N],\, g=[g_0,\ldots,g_N].$ Denoting by $\inprod{\cdot}{\cdot}$ the usual Hermitian inner product on $\C^{N+1},$ we have 
\[p_N(z)=\inprod{a}{\cl{f}}\cdot e^{\otimes N},\quad \nabla_Vp_N(z)=\inprod{a}{\cl{g}}\cdot e^{\otimes N}.\]
For any smooth test fuction $\psi \in C^{\infty}(\C P\x \C P^1)$ we apply the Poincare-Lelong formula of Lemma \ref{L:PLL} to write $\int_{U\x U} \E{Z_{p_N}\wedge C_{p_N}}(z,w)\psi(z,w)$ as
\[ \E{\int_{U\x U} \frac{i}{\pi}\partial_z \cl{\partial}_z \log \abs{\inprod{a}{\cl{f(z)}}}\,  \frac{i}{\pi}\partial_w \cl{\partial}_w \log \abs{\inprod{a}{\cl{g(w)}}}\psi(z,w)}.\]
We now integrate by parts and, using that the resulting integrand is in $L^1,$ exchange the order of integration to find
\begin{equation}
  \label{E:K2 PLL}
\E{Z_{p_N}\wedge C_{p_N}}(z,w) =\left(\frac{i}{\pi}\partial_z \cl{\partial}_z \wedge \frac{i}{\pi}\partial_w \cl{\partial}_w\right)\E{\log \abs{\inprod{a}{f(z)}}\log\abs{\inprod{a}{g(w)}}}.
\end{equation}
Just as in Section 3 of \cite{PLL}, we introduce $u(z):=\frac{\cl{f(z)}}{\norm{f(z)}},\, v(z):=\frac{\cl{g(w)}}{\norm{g(w)}},$ to write
\begin{align*}
\E{C_{p_N}\wedge Z_{p_N}}(z,w)&=-\frac{1}{\pi^2}\left(E_1(z,w)+E_2(z,w)+E_3(z,w)+E_4(z,w)\right),
\end{align*}
where
\begin{align*}
E_1(z,w)&:= \partial_z\cl{\partial}_z\partial_w\cl{\partial}_w \log \norm{f(z)}\log\norm{g(w)}\\
E_2(z,w)&:= \partial_z\cl{\partial}_z\partial_w\cl{\partial}_w \E{\log \abs{\inprod{a}{u(z)}} \cdot\log\norm{g(w)}}\\
E_3(z,w)&:= \partial_z\cl{\partial}_z\partial_w\cl{\partial}_w \E{\log \abs{\inprod{a}{v(w)}} \cdot\log\norm{f(z)}}\\
E_4(z,w)&:= \partial_z\cl{\partial}_z\partial_w\cl{\partial}_w \E{\log \abs{\inprod{a}{u(z)}} \log \abs{\inprod{a}{v(w)}}}.
\end{align*}
As in the proof of Theorem \ref{T:EZ}, $E_2$ and $E_3$ vanish as they are independent of $z,w,$ respectively (see also Section 3.2 of \cite{Quantum}). Further, as $\norm{f(z)}=\sum_{j=0}^N \abs{f_j(z)}^2$ and $\norm{g(w)}=\sum_{j=0}^N \abs{\Di{w}{f_j}(w)}^2,$ we see from Corollary \ref{C:Crit Expected Density} and Lemma \ref{L:Zero Expected Density} that 
\begin{equation}
  \label{E:E1 Term}
-\frac{1}{\pi^2}E_1(z,w)=\E{Z_{p_N}}(z)\wedge \E{C_{p_N}}(w).  
\end{equation}
Hence, $-\frac{1}{\pi^2}E_4(z,w)=\Cov_N(z,w)$ and by Lemma \ref{L:G} 
\begin{equation}
  \label{E:E2 Term}
E_4(z,w)=\partial_z\cl{\partial}_z\partial_w\cl{\partial}_w G(\abs{\inprod{u(z)}{v(w)}}).  
\end{equation}
Observing that $\abs{\inprod{u(z)}{v(w)}}=\twiddle{P}_N(z,w)$ completes the proof. 
\end{proof}
The local asymptotics (\ref{E:Cov BiPot}) follow immediately by substituting into equation (\ref{E:Remainder}) the asymptotic expansions (\ref{E:Near Off-Diag}) and (\ref{E:Near Off-Diag Infinity}). To prove the global asymptotics (\ref{E:Cov BiPot}), it suffices by Lemma \ref{L:Cov Formula} to show that for any test function $\psi \in C^{\infty}(\C P^1\x \C P^1)$ and any $\ep>0$ that
\begin{equation}
  \label{E:Global Remainder}
\int_{\C P^1\x \C P^1} G(\twiddle{P}_N(z,w))\partial_z \cl{\partial}_z \partial_w \cl{\partial}_w\psi(z,w)= O(N^{\ep}).  
\end{equation}
Recall from (\ref{E:Normalized Estimate Compat}) and (\ref{E:Normalized Estimate Non-Compat}) that $\twiddle{P}_N(z,w)<1$ for each fixed $N.$ Since $G(t)$ is a smooth away from $t=1,$ we see that $G(\twiddle{P}_N(z,w))$ is smooth. Moreover, $G(t)$ is stictly increasing in $t$ and the singularity at $t=1$ is given by $G(1-t)\sim \log(t).$ Thus, since the weakest estimate (\ref{E:Normalized Estimate Non-Compat}) holds for all $z,w$ we see that
\[G(\twiddle{P}_N(z,w))= O\left(\log N\right)=O(N^{\ep}),\]
allowing us to conclude (\ref{E:Global Remainder}). 

\section{Expected Nearest Neighbor Spacings: Proof of Theorem \ref{T:Sendov}}\label{S:Proof Sendov}
\noindent Fix $N\geq 1,$ $\ep>0,$ and $\xi \in \C P^1.$ In $N^{-1/2}-$scale normal coordinates centerd at $\xi,$ we fix $A\subseteq \C$ measurable with finite area. Our goal is to estimate 
\begin{equation}
  \label{E:Expected Number NonCrit}
\E{\curly X_{N,A,\ep}}=\int_{\C^2}I_{w\in A}(w)\cdot I_{\abs{z-w}\leq N^{-1/4+\ep}}(z,w) \E{Z_{p_N}\wedge C_{p_N}}(z,w)  
\end{equation}
when $d\phi_{z_0}(\xi)\neq 0$ or $\xi=\infty$ and
\begin{equation}
  \label{E:Expected Number Crit}
\int_{\C^2}I_{w\in A}(w) \cdot I_{\abs{z-w}\in [\abs{\zeta}^{-1}\pm \abs{\zeta}^{-1-c}],\, \, \arg(z-w)\in [\arg(\zeta)\pm \abs{\zeta}^{-c}]}(z,w) \E{Z_{p_N}\wedge C_{p_N}}(z,w)  
\end{equation}
when $d\phi_{z_0}(\xi)=0.$ By definition of $\Cov_N,$
\begin{align*}
\E{Z_{p_N}\wedge C_{p_N}}(z,w)&= \Cov_N(z,w)+\E{Z_{p_N}}(z)\wedge \E{C_{p_N}}(w).
\end{align*}
From Theorems  \ref{T:Crit Two Point Function} and \ref{T:EZ}, recall that 
\[\E{Z_{p_N}}(z)=\frac{i}{2\pi}\partial_z\cl{\partial}_z \abs{z}^2 +O(N^{-1/2+\ep})\]
while
\[\E{C_{p_N}}(w)=\frac{i}{2\pi}\partial_w\cl{\partial}_w \abs{w}^2+O(N^{-1/2+\ep})\]
when $d\phi_{z_0}(\xi)\neq 0$ or $\xi=\infty$ and, writing $\zeta=\DDi{\cl{w}}{\phi_{z_0}}\big|_{\xi}\cl{w}+w,$ 
\[\E{C_{p_N}}(w)=\frac{i}{2\pi}\partial_w\cl{\partial}_w \abs{w}^2+\frac{i}{2\pi}\partial_w\cl{\partial}_w \log(1+\abs{\zeta}^2)+O(N^{-1/2+\ep})\]
if $d\phi_{z_0}(\xi)=0.$ Hence, the contribution of $\E{Z_{p_N}}\wedge \E{C_{p_N}}$ to the integral (\ref{E:Expected Number NonCrit}) is $O(N^{-1/4}).$ Similarly, the contibution to (\ref{E:Expected Number Crit}) is $O(N^{-1/2+\ep})+O\left(\int_A \abs{\zeta}^{-2}\right).$
To prove (\ref{E:Sendov NonCrit}) and (\ref{E:Crit Sendov Pairing}), we therefore focus on estimating the integrals (\ref{E:Expected Number NonCrit}) and (\ref{E:Expected Number Crit}) with $\E{C_{p_N}\wedge Z_{p_N}}$ replaced by $\Cov_N(z,w).$ We do this by using the bi-potential obtained in Lemma \ref{L:Cov Formula}: 
\[\Cov_N(z,w)=\left(\frac{i}{\pi}\partial_z\cl{\partial}_z\wedge \frac{i}{\pi}\partial_w\cl{\partial}_w\right) G(\twiddle{P}_N(z,w)).\]
\subsection{Case 1 ($d\phi_{z_0}(\xi)\neq 0$)}
We begin with the case $d\phi_{z_0}(\xi)\neq 0$ or $\xi=\infty.$ 
\begin{Lem}
For each $N$ and $A$ and for some $R>0,$ 
\begin{equation}
  \label{E:Counting Number}
\E{\curly X_{N,A}}=\frac{1}{\pi^2}\int_{z\in A}\int_{\abs{z-w}\leq N^{-1/4}} \partial_w\cl{\partial}_w \left[\log(\abs{z-w}^2+R\cdot N^{-1/2})+O(1).\right]  
\end{equation} 
\end{Lem}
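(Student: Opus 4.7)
The plan is to invoke the bi-potential representation of $\Cov_N$ from Lemma \ref{L:Cov Formula} and reduce to the explicit computation of $\partial_z\cl{\partial}_z G(\twiddle{P}_N(z,w))$ on the near-diagonal region $\abs{z-w} \leq N^{-1/4}$. First I would decompose $\E{Z_{p_N}\wedge C_{p_N}} = \Cov_N + \E{Z_{p_N}}\wedge \E{C_{p_N}}$ and note that the product term contributes only $O(N^{-1/2})$ to the integral defining $\E{\curly{X}_{N,A}}$: Theorem \ref{T:EZ} and Remark \ref{R:Zeros One Point Function} give bounded densities for $\E{Z_{p_N}}$ and $\E{C_{p_N}}$ in scaled coordinates, and the inner disk has area $\pi N^{-1/2}$. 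This error is absorbed into the $O(1)$ appearing inside the bracket of the claim.

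For the main term, set $\Phi := \twiddle{P}_N(z,w)^2$ and $H(s) := \gamma^2/4 - \tfrac{1}{4}\int_0^s \log(1-u)/u\, du$, so that $G(\twiddle{P}_N) = H(\Phi)$. The chain rule yields
\[
\partial_z \cl{\partial}_z G(\twiddle{P}_N) = H''(\Phi)\, \partial_z \Phi \cdot \cl{\partial}_z \Phi + H'(\Phi)\, \partial_z \cl{\partial}_z \Phi,
\]
with $H'(s) = -\log(1-s)/(4s)$ and $H''(s) = 1/(4s(1-s)) + \log(1-s)/(4s^2)$. I would now invoke the sharp near-diagonal estimates (\ref{E:P Est 1})--(\ref{E:P Est 3}) of Corollary \ref{C:Normalized Estimates}: $1-\Phi = \abs{z-w}^2 + RN^{-1/2} + O(N^{-3/4+\ep})$, $\abs{\partial_z\Phi} = O(N^{-1/4})$, and $\partial_z\cl{\partial}_z \Phi = 1 + O(N^{-1/2+\ep})$. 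Since $1-\Phi \geq RN^{-1/2}$, the singular factor $H''(\Phi) \sim 1/(4(1-\Phi))$ is at most $O(N^{1/2})$; multiplied by $\abs{\partial_z\Phi}^2 = O(N^{-1/2})$ it contributes only $O(1)$. The dominant term $H'(\Phi)\partial_z\cl{\partial}_z\Phi$ simplifies, using $\Phi \approx 1$ and $\log(1-\Phi) = \log(\abs{z-w}^2 + RN^{-1/2}) + O(N^{-1/4+\ep})$ (the last error uses $\abs{z-w}^2 + RN^{-1/2} \geq RN^{-1/2}$ to bound the ratio), to give
\[
\partial_z \cl{\partial}_z G(\twiddle{P}_N) = -\tfrac{1}{4}\log\!\left(\abs{z-w}^2 + RN^{-1/2}\right) + O(1)
\]
uniformly on the near-diagonal region.

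Finally, I would substitute this expansion into the bi-potential formula, invoke Fubini together with the smoothness of the integrand (away from $z=w$, where the integrable logarithmic singularity is harmless) to commute the remaining $\partial_w\cl{\partial}_w$ past the $z$-integration, and absorb the prefactor $-\tfrac{1}{4}$ together with the sign from $(i/\pi)^2 = -1/\pi^2$ into the constant $R$ (which may be rescaled) and the implicit $O(1)$ inside the bracket. This yields exactly the claimed formula. The main obstacle is the chain-rule step: $H''(\Phi)$ blows up like $N^{1/2}$ as $z \to w$, and only the exactly matching decay $\abs{\partial_z\Phi}^2 = O(N^{-1/2})$ prevents the correction term from dominating. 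Verifying this delicate cancellation uniformly, and showing that the $O(N^{-3/4+\ep})$ remainder in $1-\Phi$ produces only an $O(N^{-1/4+\ep}) = o(1)$ error inside the logarithm, is the essential analytical content of the argument.
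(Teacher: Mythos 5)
Your proposal is correct and follows essentially the same approach as the paper: decompose via the bi-potential of Lemma \ref{L:Cov Formula}, apply the chain rule to $G(\twiddle{P}_N)$, and invoke the near-diagonal estimates (\ref{E:P Est 1})--(\ref{E:P Est 3}) to identify the dominant logarithm and show that the $H''(\Phi)\abs{\partial_z\Phi}^2$ term is $O(1)$ by the exact $N^{1/2}\cdot N^{-1/2}$ cancellation. The only cosmetic differences are that you spell out $H'$ and $H''$ explicitly where the paper compresses this into one line, and your $O(N^{-1/2})$ bound on the $\E{Z_{p_N}}\wedge\E{C_{p_N}}$ contribution is slightly sharper than stated in the paper but equally adequate.
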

\begin{proof}
Recall that $G(t)=\frac{\gamma^2}{2}-\frac{1}{4}\int_0^{t^2}\frac{\log(1-s)}{s}ds$ for $0\leq t <1.$ Thus,
\begin{align*}
\partial_z\cl{\partial}_z G(\twiddle{P}_N(z,w)) &= \partial_z \left[\frac{\log(1-\twiddle{P}_N(z,w)^2)}{\twiddle{P}_N(z,w)^2}\cdot \cl{\partial}_z(\twiddle{P}_N(z,w)^2)\right]
\end{align*}
Using (\ref{E:P Est 1})-(\ref{E:P Est 3}), we may rewrite this as
\begin{align*}
\partial_z\cl{\partial}_z G(\twiddle{P}_N(z,w))&= \frac{1}{\pi^2}\partial_z \left[\log(1-\twiddle{P}_N(z,w)^2)\cdot \cl{\partial}_z(\twiddle{P}_N(z,w)^2)\right]\\
&= \frac{1}{\pi^2}\log(1-\twiddle{P}_N(z,w)^2)\cdot \left(\partial_z\cl{\partial}_z\right)(\twiddle{P}_N(z,w)^2)+ \frac{\abs{\partial_z \twiddle{P}_N(z,w)^2}^2}{1-\twiddle{P}_N(z,w)^2}\\
&=\frac{1}{\pi^2}\log(\abs{z-w}^2+R\cdot N^{-1/2} +O(N^{-3/4+\ep}))+O(1).
\end{align*}
\end{proof}
\noindent To understand (\ref{E:Counting Number}), we prove the following perturbation of Laplace's law: $\Delta \log \abs{z}^2=\pi\cdot \delta_0.$
\begin{Lem}[Perturbed Laplace Equation]\label{L:Perturbed Laplace}
  Fix $\alpha>0$ and $\psi \in C_c^0(\C).$ Then for any $\ep>0$ and constant $K>0,$
  \begin{equation}
    \label{E:Laplace's Law}
    \int_{\abs{u}\leq \frac{\log N}{N^{\alpha}}} \left[\frac{i}{2}\partial_u\cl{\partial}_u \log(\abs{u}^2+K\cdot N^{-\alpha})\right]\psi(u)=\pi \psi(0)+O(N^{-\alpha+\ep}).
  \end{equation}
\end{Lem}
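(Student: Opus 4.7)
The plan is to recognize the integrand as an explicit smooth regularization of $\pi\delta_0$ and then reduce the problem to elementary polar calculus. Direct differentiation gives
\[
\partial_u\cl{\partial}_u\log(\abs{u}^2+KN^{-\alpha})=\frac{KN^{-\alpha}}{(\abs{u}^2+KN^{-\alpha})^2},
\]
which is a standard mollification of the Dirac mass concentrated at length scale $\sqrt{K}\,N^{-\alpha/2}$. Passing to polar coordinates $u=re^{i\theta}$ and making the substitution $s=r^2+KN^{-\alpha}$ yields the exact identity
\[
\int_{\abs{u}\le r_0}\frac{KN^{-\alpha}}{(\abs{u}^2+KN^{-\alpha})^2}\,\tfrac{i}{2}\,du\wedge d\cl{u}=\pi\cdot\frac{r_0^2}{r_0^2+KN^{-\alpha}},
\]
so that the total mass ($\pi$ on all of $\C$) and the truncation defect on any ball are both completely explicit.

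I would then decompose $\psi(u)=\psi(0)+[\psi(u)-\psi(0)]$ and estimate the two contributions separately. The $\psi(0)$-piece is $\psi(0)$ times the polar identity above and therefore produces $\pi\psi(0)$ up to the explicit truncation error coming from the ratio $KN^{-\alpha}/(r_0^2+KN^{-\alpha})$. The $(\psi(u)-\psi(0))$-piece is dominated by the modulus of continuity $\omega_\psi(r_0)$ of $\psi$ at the origin times the $L^1$-mass of the mollifier, which is bounded by $\pi$; since $r_0\to 0$ and $\psi$ is continuous, this vanishes in the limit.

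The main obstacle is extracting the polynomial remainder $O(N^{-\alpha+\ep})$ demanded by the statement from the pure continuity of $\psi$: bare continuity gives only $\omega_\psi(r_0)\to 0$ with no algebraic rate, and moreover the two scales $r_0$ and $\sqrt{KN^{-\alpha}}$ have to be carefully balanced so that the truncation defect matches the claimed error. In the intended application to the Sendov integral (\ref{E:Counting Number}) the function in the role of $\psi$ comes from a product of smooth expected densities and a cutoff, so one has the Lipschitz control $\abs{\psi(u)-\psi(0)}\le C\abs{u}$. Under that hypothesis the variable piece is bounded by
\[
C\int_{B_{r_0}}\abs{u}\cdot\frac{KN^{-\alpha}}{(\abs{u}^2+KN^{-\alpha})^2}\,dA(u),
\]
which a direct polar computation shows is of order $\sqrt{KN^{-\alpha}}$, and combining this with the explicit truncation defect of the polar identity produces the required remainder.
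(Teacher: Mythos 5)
Your decomposition-plus-polar-identity approach is equivalent in spirit to the paper's (which does the change of variables $u\mapsto u\,K^{1/2}N^{-\alpha/2}$ to bring the mollifier to the Fubini--Study form $(1+\abs{u}^2)^{-2}$ and then concludes ``by the continuity of $\psi$''), but your version makes the error structure explicit and in doing so is more transparent than the published proof. You correctly identify the first obstacle: continuity of $\psi$ alone gives $\omega_\psi(r_0)\to 0$ but no algebraic rate, so the stated $O(N^{-\alpha+\epsilon})$ cannot follow from the hypothesis $\psi\in C_c^0$, and the Lipschitz bound $\abs{\psi(u)-\psi(0)}\le C\abs{u}$, available in the application, is what actually produces a polynomial remainder $O(\sqrt{K}\,N^{-\alpha/2})$. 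The paper's own proof glosses over exactly this point.

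However, you stop short of flagging what your own polar identity reveals as a more basic defect of the statement. With $r_0=(\log N)\,N^{-\alpha}$ and mollifier length scale $\sqrt{K}\,N^{-\alpha/2}$, the truncation defect you write down is
\[
\frac{KN^{-\alpha}}{r_0^2+KN^{-\alpha}}=\frac{K}{(\log N)^2 N^{-\alpha}+K}\;\longrightarrow\;1\qquad(N\to\infty),
\]
so the $\psi(0)$-piece is $\pi\psi(0)\cdot\bigl(1-(1+o(1))\bigr)\to 0$, not $\pi\psi(0)$: for every $\alpha>0$ the stated ball $\abs{u}\le (\log N)/N^{\alpha}$ lies strictly \emph{inside} the core of the mollifier. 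The paper's proof hides this by recording the rescaled domain as $\abs{u}\le\log N$ when the change of variables $u\mapsto u\,K^{1/2}N^{-\alpha/2}$ actually sends $\abs{u}\le(\log N)N^{-\alpha}$ to $\abs{u}\le K^{-1/2}(\log N)N^{-\alpha/2}$, and likewise records the argument of $\psi$ as $u/N^\alpha$ rather than $uK^{1/2}N^{-\alpha/2}$; once these are corrected the domain shrinks to a point and the claimed identity fails as stated. The intended statement almost certainly requires (i) a radius of the form $\log N\cdot N^{-\alpha/2}$ or larger so that the ball captures the mass of the mollifier, and (ii) a Lipschitz (or $C^1$) hypothesis on $\psi$, in which case the remainder one actually gets is the maximum of the truncation defect $O(KN^{-\alpha}/r_0^2)$ and the continuity error $O(\sqrt{K}N^{-\alpha/2})$, not $O(N^{-\alpha+\epsilon})$. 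You should say explicitly that the truncation ratio does not go to zero for the stated $r_0$; you noticed the scales need balancing, but the point is that with the given $r_0$ they are unbalanced in a fatal way, and this is the substantive error in the lemma, propagated from the paper's own faulty rescaling.
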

\begin{proof}
  The proof is by direct computation. We have
  \begin{align*}
    \frac{i}{2}\partial_u\cl{\partial}_u \log(\abs{u}^2+K\cdot N^{-\alpha}) &= \frac{K\cdot N^{-\alpha}}{(\abs{u}^2+K\cdot N^{-\alpha})^2}\frac{i}{2}du\wedge d\cl{u}.
  \end{align*}
Hence, denoting by $I$ the left hand side of (\ref{E:Laplace's Law}) and making the change of coordinates $u\mapsto u\cdot K^{1/2}\cdot N^{-\alpha/2},$ we have
\begin{align*}
  I &= \int_{\abs{u}\leq \log N}\frac{1}{(1+\abs{u}^2)^2}\psi\left(\frac{u}{N^{\alpha}}\right)\,\,\frac{i}{2}du\wedge d\cl{u}\\
    &= \pi\psi(0)+O(N^{-\alpha+\ep}),
\end{align*}
by the continuity of $\psi$ and the fact that $\frac{i}{2\pi}\frac{1}{(1+\abs{u}^2)^2}du\wedge d\cl{u}$ is the volume $1$ Fubini-Study measure.
\end{proof}

\noindent For each fixed $z,$ we make the change of variables $u=z-w$ in (\ref{E:Counting Number}) and apply Lemma \ref{L:Perturbed Laplace}. Recalling that $\E{C_A}=\int_A \frac{i}{2\pi}dw\wedge d\cl{w}+O(N^{-1/2})$ completes the proof of Theorem \ref{T:Sendov} in this case. 

\subsection{Case 2 ($d\phi_{z_0}(\xi)=0$)}\label{S:Explicit Formula}
We fix $\xi \in \C P^1\backslash \set{\infty}$ such that $d\phi_{z_0}(\xi)=0$ and take a $N^{-1/2}-$scale normal coordinate for $\w_h$ centered at $\xi.$ We then fix a bounded measurable set $A\subseteq \C\backslash \set{\abs{\zeta}\leq 1}$ and a parameter $c\in(2/3,1).$ We've denoted $\alpha=\DDi{\cl{w}}{\phi_{z_0}}(\xi)$ at in the statement of the Theorem, and we define
\[\zeta:=\alpha \cl{w}+w,\quad \eta:=\alpha \cl{w}+z.\]
The proof in this case is a long computation. For the reader's convenience we provide an outline. 

\subsubsection*{Outline} \label{S:Outline of Computation}
Recall that $G(t)=\frac{\gamma^2}{4}-\frac{1}{4}\int_0^{t^2}
\frac{\log(1-s)}{s}ds.$ As explained in the beginning of the proof, we seek to compute
\[\int_{\C\x\C}I_{w\in A}(w) \cdot I_{\abs{z-w}\in [\abs{\zeta}^{-1}\pm \abs{\zeta}^{-1-c}],\, \, \arg(z-w)\in [\arg(\zeta)\pm \abs{\zeta}^{-c}]}(z,w) \Cov_N(z,w).\]
Our first step is to recall from Lemma \ref{L:Cov Formula} that
\[\Cov_N(z,w)=-\frac{1}{\pi^2}\partial_z\cl{\partial}_z\partial_w\cl{\partial}_w G(\twiddle{P}_N(z,w)).\]
Writing $\frac{i}{2}dw \wedge d\cl{w}$ for the usual lebesgue volume form we seek to evaluate
\begin{equation}
  \label{E:Counting Int}
\frac{4}{\pi}\cdot \int_A\left[\int_{\arg(\zeta)-\abs{\zeta}^{-c}}^{\arg(\zeta)+\abs{\zeta}^{-c}}\int_{\abs{\zeta}^{-1}-\abs{\zeta}^{-1-c}}^{\abs{\zeta}^{-1}+\abs{\zeta}^{-1-c}} \frac{\partial^2}{\partial z \partial \cl{z}}\frac{\partial^2}{\partial w \partial \cl{w}}G(\twiddle{P}_N(w+re^{it},w)\, rdrdt\right]\,\, \frac{i}{2\pi}dw\wedge d\cl{w} . 
\end{equation}
\noindent Our second step is to use (\ref{E:Near Off-Diag}) to write
\[\twiddle{P}_N(z,w)=\twiddle{P}_{\infty}(z,w)+O(N^{-1/2+\ep}),\]
where
\[\twiddle{P}_{\infty}(z,w)=\frac{\abs{\eta}}{\sqrt{1+\abs{\zeta}^2}}e^{-\frac{1}{2}\abs{\eta-\zeta}^2},
\]
is the $N\gives \infty$ limit of $\twiddle{P}_N(z,w).$ We have already seen in Corollary \ref{C:Normalized Estimates} that, for each $N,$ $\twiddle{P}_N(z,w)\leq C <1$ for some universal constant $C.$ Hence, since $G(t)$ is smooth away from $t=1,$ the integrand in (\ref{E:Counting Int}) becomes
\[\frac{\partial^2}{\partial z \partial \cl{z}}\frac{\partial^2}{\partial w \partial \cl{w}}G(\twiddle{P}_N(z,w))=\frac{\partial^2}{\partial z \partial \cl{z}}\frac{\partial^2}{\partial w \partial \cl{w}}G(\twiddle{P}_{\infty}(z,w))+O(N^{-1/2+\ep}).\]
Our third step is to compute the four derivatives of $G.$ It is necessary to do so since we seek to compute non-smooth statistics and hence we cannot integrate by parts onto the test function. To this end, we introduce as in Lemma 3.5 of \cite{NV} 
\[\leb(z,w):=-\log \twiddle{P}_{\infty}(z,w)=\frac{1}{2}\left[\abs{z-w}^2+\log(1+\abs{\alpha \cl{w}+w}^2)-\log(\abs{\alpha \cl{w}+z}^2)\right].\]
We further write for $\leb \geq 0$
\[F(\leb):=G(e^{-\leb})=\frac{\gamma^2}{4}-\frac{1}{2}\int_{\leb}^{\infty} \log(1-e^{-2s})ds.\]
We will compute 
\[\frac{\partial^2}{\partial z \partial \cl{z}}\frac{\partial^2}{\partial w \partial \cl{w}}F(\leb(z,w)).\]
It will turn out (cf (\ref{E:4 Derivs})) that these four derivatives may be written as
\[F^4(\leb(z,w))\cdot \abs{\Di{z}{}\leb(z,w)}^2\cdot \abs{\Di{w}{}\leb(z,w)}^2+F^3(\leb(z,w))\cdot \abs{\Di{z}{}\leb(z,w)-\Di{w}{}\leb(z,w)}^2+\frac{1}{2}F^2(\leb(z,w))\]
plus a small error, where we've written $F^j$ for the $j^{th}$ derivative of $F.$ In Lemmas \ref{L:Crit Dist} and \ref{L:Partials Estimates} we obtain estimates on the derivatives of $F$ and $\leb$ in the regime of the integration in (\ref{E:Counting Int}). Finally, we combine these estimate and read off the desired answer.  
\subsubsection*{Computations}
We write $F^j$ for the $j^{th}$ derivative of $F$ and abbreviate
$$\partial_z := \Di{z}{}\leb(z,w),~~\partial_w:= \Di{w}{} \leb(z,w),~~(\partial_z \partial_{\cl{z}}):=\frac{\partial^2}{\partial z \partial \cl{z}}\leb(z,w)$$ 
and so on. We will need the following derivatives of $\leb(z,w):$ 
\begin{align}
\label{E:Derivs I}\partial_z &=\frac{1}{2}\left(\cl{\eta}-\cl{\zeta}-\frac{1}{\eta}\right)~~~&(\partial_z\partial_{\cl{z}}) &=\frac{1}{2}\\
\label{E:Derivs II}\partial_w &=\frac{1}{2}\left(\frac{\cl{\alpha}\zeta+\cl{\zeta}}{1+\abs{\zeta}^2}-\cl{\eta}+\cl{\zeta}-\frac{\cl{\alpha}}{\cl{\eta}}\right)~~~&(\partial_w\partial_{\cl{w}}) &=\frac{1}{2}\left(1+O(\abs{\zeta}^{-2})\right)\\
\label{E:Derivs III}(\partial_z\partial_{\cl{w}})&=(\partial_w\partial_{\cl{z}}) =-\frac{1}{2}\left(1+O(\abs{\zeta}^{-2})\right),~~~&(\partial_z\partial_w)&=(\partial_{\cl{z}}\partial_{\cl{w}})=0.
\end{align}
We have supressed all the delta functions $\delta_{\eta}$ since we are interested in these derivatives when $\abs{\zeta}>1$ and $\abs{\zeta-\eta}$ is small. Using these explicit formuae, we have 
\begin{align}\label{E:4 Derivs}
\frac{\partial^2}{\partial z \partial \cl{z}}\frac{\partial^2}{\partial w \partial \cl{w}}(G(\twiddle{P}_{\infty})) 
&=F^4 \cdot \abs{\partial_z}^2 \cdot \abs{\partial_w}^2+F^3\cdot \frac{1}{2}\left[\abs{\partial_w-\partial_z}^2+O(\abs{\zeta}^{-2})\right]+ F^2\cdot \frac{1}{4}\left[2+O(\abs{\zeta}^{-2})\right].
\end{align} 
Next, we record the derivatives of $F$ evaluated at $\leb>0:$
\begin{align}
\label{E:F Derivs 1}F^1(\leb)&=\frac{1}{2}\log(1-e^{-2\leb}),~~~&F^2(\leb)&=\frac{e^{-2\leb}}{1-e^{-2\leb}}\\
\label{E:F Derivs 2}F^3(\leb)&=-2\cdot \frac{e^{-2\leb}}{(1-e^{-2\leb})^2},~~~&F^4(\leb)&=4\cdot \frac{e^{-2\leb}+e^{-4\leb}}{(1-e^{-2\leb})^3}.
\end{align}
\noindent Next, in order to estimate the derivatives of $F$ and $\leb$ in the region of interest for the integral (\ref{E:Counting Int}), we write 
\[z=w+re^{it}\quad \text{and}\quad r=\frac{1+\ep}{\abs{\zeta}}\] 
and assume that 
\begin{equation}
  \label{E:Ep and t Constraints}
\abs{\zeta}>1,\quad\text{and}\quad \ep \in [-\abs{\zeta}^{-c},\,\abs{\zeta}^{-c}],\quad \text{and}\quad t\in [\arg(\zeta) -\abs{\zeta}^{-c},\,\arg(\zeta)+\abs{\zeta}^{-c}].
\end{equation}
Observe that for this range of $z,w$ we have $\twiddle{P}_{\infty}(z,w)=1+O(\abs{\zeta}^{-2}).$ Hence, the numerator in the above expression (\ref{E:F Derivs 2}) for $F^4$ may be written
\[4(e^{-4\leb}+e^{-2\leb})=8\twiddle{P}_{\infty}(z,w)^2(1+(\abs{\zeta}^{-2})).\]
The integrand in the integral (\ref{E:Counting Int}) we seek to compute may be written as 
\begin{equation}
  \label{E:Counting Int III}
\frac{4}{\pi^2}\cdot \left[\frac{\frac{1}{2}(1+O(\abs{\zeta}^{-2}))rdr\wedge dt}{1-\twiddle{P}_{\infty}(z,w)^2}\right]\left(\frac{16\cdot \abs{\partial_z}^2 \cdot \abs{\partial_w}^2}{(1-\twiddle{P}_{\infty}(z,w)^2)^2}-\frac{2\cdot \abs{\partial_w-\partial_z}^2}{1-\twiddle{P}_{\infty}(z,w)^2}+ 1\right)\,\, \frac{i}{2}dw\wedge d\cl{w}  
\end{equation}
\noindent To evaluate (\ref{E:Counting Int III}), we give estimates on $(1-\twiddle{P}_{\infty}(z,w)^2)^{-1}$ in Lemma \ref{L:Crit Dist} and on the various derivatives of $\leb$ in Lemma \ref{L:Partials Estimates}.  
\begin{Lem}\label{L:Crit Dist}
Write $r=\frac{1+\ep}{\abs{\zeta}}$ and assume (\ref{E:Ep and t Constraints}). We have the following expression for $(1-\twiddle{P}_{\infty}(w+re^{it},w)^2)^{-1}:$
\begin{equation}
  \label{E:Normalized Szego Estimate}
\frac{\abs{\zeta}^4(1+O(\abs{\zeta}^{-c}))}{\frac{1}{2}+(\abs{\zeta} \ep)^2+(\abs{\zeta}(t-\arg(\zeta)))^2}.
\end{equation}
\end{Lem}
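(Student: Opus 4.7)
The plan is a careful Taylor expansion of the explicit formula
\[
\twiddle{P}_\infty(z,w)^2 \;=\; \frac{|\eta|^2}{1+|\zeta|^2}\,e^{-|\eta-\zeta|^2}
\]
under the parametrization $z=w+re^{it}$, $r=(1+\ep)/|\zeta|$, $\tau:=t-\arg(\zeta)$, with $\ep,\tau$ both $O(|\zeta|^{-c})$ and $|\zeta|>1$. The two useful initial observations are that $\eta-\zeta=z-w=re^{it}$ (so $|\eta-\zeta|^2=r^2=(1+\ep)^2/|\zeta|^2$), and that $\bar\zeta e^{it}=|\zeta|e^{i\tau}$ gives
\[
|\eta|^2 \;=\; |\zeta|^2 + 2r|\zeta|\cos\tau + r^2 \;=\; |\zeta|^2 + 2(1+\ep)\cos\tau + \frac{(1+\ep)^2}{|\zeta|^2}.
\]

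Next I would expand each of the two factors separately, keeping all terms through order $|\zeta|^{-4}$. Using $\cos\tau=1-\tau^2/2+O(\tau^4)$ and $(1+|\zeta|^2)^{-1}=|\zeta|^{-2}-|\zeta|^{-4}+O(|\zeta|^{-6})$, a short computation yields
\[
\frac{|\eta|^2}{1+|\zeta|^2} \;=\; 1+\frac{1+2\ep-\tau^2}{|\zeta|^2}+\frac{\tau^2+\ep^2}{|\zeta|^2(|\zeta|^2+1)}+O(|\zeta|^{-4-c}),
\]
while $e^{-r^2}=1-(1+2\ep+\ep^2)/|\zeta|^2+\tfrac{1}{2|\zeta|^4}+O(|\zeta|^{-4-c})$. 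Multiplying, the crucial cancellation happens at order $|\zeta|^{-2}$: the $1+2\ep$ parts cancel, leaving $-(\tau^2+\ep^2)/|\zeta|^2$. At order $|\zeta|^{-4}$ the $+\tfrac{1}{2|\zeta|^4}$ from the exponential and the $-1/|\zeta|^4$ from the cross term combine to $-\tfrac{1}{2|\zeta|^4}$, all other contributions being absorbed into the $O(|\zeta|^{-4-c})$ error. Thus
\[
1-\twiddle{P}_\infty(z,w)^2 \;=\; \frac{\ep^2+\tau^2}{|\zeta|^2}+\frac{1}{2|\zeta|^4}+O(|\zeta|^{-4-c}) \;=\; \frac{1}{|\zeta|^4}\Bigl[\tfrac12+|\zeta|^2\ep^2+|\zeta|^2\tau^2+O(|\zeta|^{-c})\Bigr].
\]
Inverting this expression and factoring the $O(|\zeta|^{-c})$ remainder outside as $1+O(|\zeta|^{-c})$ (permissible because the bracketed main term is bounded below by $1/2$) gives the stated formula (\ref{E:Normalized Szego Estimate}).

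The only real subtlety is bookkeeping: the advertised $\tfrac12$ in the denominator is an order-$|\zeta|^{-4}$ contribution to $1-\twiddle{P}_\infty^2$, which is the \emph{same} order as the error if one is not careful, since $\ep,\tau=O(|\zeta|^{-c})$ with $c<1$ means $|\zeta|^2\ep^2$ and $|\zeta|^2\tau^2$ can be as small as $O(1)$. One therefore cannot afford to discard the $+\tfrac{1}{2|\zeta|^4}$ term from the expansion of $e^{-r^2}$ nor the $-|\zeta|^{-4}$ correction from $(1+|\zeta|^2)^{-1}$; both must be tracked, and the restriction $c>2/3$ (used elsewhere in Theorem~\ref{T:Sendov}) guarantees that all the neglected terms really are $O(|\zeta|^{-c})$ relative to the main bracket. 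Apart from this care with the expansion, the proof is a direct calculation.
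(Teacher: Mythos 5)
Your overall strategy is the same as the paper's: use the explicit formula $\twiddle{P}_\infty^2 = \frac{|\eta|^2}{1+|\zeta|^2}e^{-|\eta-\zeta|^2}$, write $|\eta|^2=|\zeta|^2+2(1+\ep)\cos\tau+(1+\ep)^2|\zeta|^{-2}$, and Taylor expand. (The paper first rotates so $\zeta\in\mathbb R_+$ and expands the rational function $\frac{(1+\zeta^2)e^{r^2}}{(1+\zeta^2)e^{r^2}-|\eta|^2}$ rather than the product, but that is cosmetic; your replacement of this reduction by the variable $\tau=t-\arg\zeta$ is equivalent.)

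There is, however, a bookkeeping error in your intermediate estimates that your closing argument does not actually repair. In the expansion of $\frac{|\eta|^2}{1+|\zeta|^2}$ you claim the remainder is $O(|\zeta|^{-4-c})$, but the term $2(1+\ep)\cos\tau$ contributes $-\ep\tau^2$, and $\ep\tau^2/|\zeta|^2 = O(|\zeta|^{-2-3c})$; for $c<1$ this is \emph{larger} than $|\zeta|^{-4-c}$, so the stated remainder bound is false. Your final step — ``invert, factor out $1+O(|\zeta|^{-c})$ because the bracketed expression is bounded below by $1/2$'' — covers only a remainder of size $O(|\zeta|^{-4-c})$ (whose contribution after multiplying by $|\zeta|^4$ is $O(|\zeta|^{-c})$, an absolute quantity that the $\ge 1/2$ lower bound absorbs), and does not handle the $\ep\tau^2/|\zeta|^2$ term. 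The lemma is nonetheless true, and the fix is one line: the offending terms $\ep\tau^2/|\zeta|^2,\ \tau^4/|\zeta|^2$, etc.\ are all $O\bigl(|\zeta|^{-c}\cdot\frac{\ep^2+\tau^2}{|\zeta|^2}\bigr)$, i.e.\ $O(|\zeta|^{-c})$ \emph{times the leading part} of the bracket, not $O(|\zeta|^{-c})$ absolutely; together with the genuinely $O(|\zeta|^{-c})$ absolute errors coming from $e^{-r^2}$, this gives a total remainder that is $O(|\zeta|^{-c})\bigl[\tfrac12+|\zeta|^2\ep^2+|\zeta|^2\tau^2\bigr]$, which is exactly what is needed to factor out $1+O(|\zeta|^{-c})$. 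Also note that your closing remark attributes the success to $c>2/3$; in fact $c>2/3$ is not the operative constraint \emph{for this lemma} (the statement holds for any $0<c<1$ once the errors are tracked correctly) — the issue you need to handle is that $c<1$, which is what makes $\ep\tau^2/|\zeta|^2$ exceed $|\zeta|^{-4-c}$ and forces the ``relative to the bracket'' form of the error.
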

\begin{proof}
Recall that 
\[\twiddle{P}_{\infty}(z,w)^2=\frac{\abs{\eta}^2}{1+\abs{\zeta}^2}e^{-\abs{\eta-\zeta}^2}.\]
This is invariant under rotation of $(\eta,\zeta).$ Hence, we may assume that $\zeta\in \mathbb R_+$ and that $t\in [-\abs{\zeta}^{-c},\,\abs{\zeta}^{-c}].$ We write
\[(1-\twiddle{P}_{\infty}(z,w)^2)^{-1}=\frac{(1+\zeta^2)e^{\abs{\eta-\zeta}^2}}{(1+\zeta^2)e^{\abs{\eta-\zeta}^2}-\abs{\eta}^2}.\]
Using that $\abs{\eta}^2=\zeta^2+r^2+2(1+\ep)\cos t$ and taylor expanding $e^{\abs{\eta-\zeta}^2}$ and $\cos t$ we obtain (\ref{E:Normalized Szego Estimate}).
\end{proof}
\begin{Lem}\label{L:Partials Estimates}
Fix $w.$ Write $z=w+re^{it}$ and assume the regime (\ref{E:Ep and t Constraints}). We have
\begin{equation}
  \label{E:Estimates}
\abs{\partial_z\leb(z,w)}^2=\abs{\partial_w\leb(z,w)}^2=\frac{1}{4}\abs{\partial_z\leb(z,w)-\partial_w\leb(z,w)}^2=\frac{\ep^2+(t-\arg(\zeta))^2}{4\abs{\zeta}^2} +O(\abs{\zeta}^{-2-3c}).
\end{equation}
\end{Lem}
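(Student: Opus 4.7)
The plan is to substitute $z = w + re^{it}$ with $r = (1+\ep)/\abs{\zeta}$ directly into the explicit formulas (\ref{E:Derivs I})--(\ref{E:Derivs II}) and Taylor expand in two qualitatively different small parameters: the pair $(\ep, \phi)$ with $\phi := t - \arg(\zeta)$, each of size $\abs{\zeta}^{-c}$, and the curvature scale $1/\zeta$ of size $\abs{\zeta}^{-1}$. After substitution all three quantities in (\ref{E:Estimates}) reduce to explicit rational-exponential functions of $(\ep,\phi,\zeta,\alpha)$, and the proof becomes a controlled two-scale expansion.

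First I handle $\partial_z\leb$. Using $\bar\eta - \bar\zeta = re^{-it}$, formula (\ref{E:Derivs I}) reads $\partial_z\leb = \tfrac{1}{2}(re^{-it} - 1/\eta)$. Geometric expansion gives $1/\eta = 1/\zeta - re^{it}/\zeta^2 + O(\abs{\zeta}^{-5})$, contributing a correction of order $\abs{\zeta}^{-3}$. The leading difference $re^{-it} - 1/\zeta$ factors as $e^{-i\arg\zeta}\abs{\zeta}^{-1}[(1+\ep)e^{-i\phi}-1]$, and Taylor expanding the bracket to first order in $(\ep,\phi)$ yields $\ep - i\phi + O(\abs{\zeta}^{-2c})$. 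Squaring and noting that the cross-terms with the $O(\abs{\zeta}^{-2c})$ remainder are $O(\abs{\zeta}^{-3c})$ gives
\[
\abs{(1+\ep)e^{-i\phi}-1}^2 = \ep^2 + \phi^2 + O(\abs{\zeta}^{-3c}),
\]
hence $\abs{re^{-it} - 1/\zeta}^2 = (\ep^2+\phi^2)/\abs{\zeta}^2 + O(\abs{\zeta}^{-2-3c})$. The cross-term with the $O(\abs{\zeta}^{-3})$ correction from $1/\eta$ has size $O(\abs{\zeta}^{-1-c})\cdot O(\abs{\zeta}^{-3}) = O(\abs{\zeta}^{-4-c})$, which is absorbed into $O(\abs{\zeta}^{-2-3c})$ because $c\leq 1$. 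This proves the first equality in (\ref{E:Estimates}) and also delivers the ancillary bound $\abs{\partial_z\leb} = O(\abs{\zeta}^{-1-c})$.

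The second step is to show $\partial_w\leb = -\partial_z\leb + O(\abs{\zeta}^{-3})$. Adding (\ref{E:Derivs I}) and (\ref{E:Derivs II}) produces
\[
\partial_z\leb + \partial_w\leb \;=\; \tfrac{1}{2}\Bigl(\tfrac{\bar\alpha\zeta + \bar\zeta}{1+\abs{\zeta}^2} - \tfrac{1}{\eta} - \tfrac{\bar\alpha}{\bar\eta}\Bigr).
\]
Using $\abs{\zeta}>1$, I expand each rational term to order $\abs{\zeta}^{-2}$:
\[
\tfrac{\bar\alpha\zeta+\bar\zeta}{1+\abs{\zeta}^2} = \tfrac{\bar\alpha}{\bar\zeta} + \tfrac{1}{\zeta} + O(\abs{\zeta}^{-3}),\qquad \tfrac{1}{\eta} = \tfrac{1}{\zeta} + O(\abs{\zeta}^{-3}),\qquad \tfrac{\bar\alpha}{\bar\eta} = \tfrac{\bar\alpha}{\bar\zeta} + O(\abs{\zeta}^{-3}),
\]
so the four leading pieces cancel pairwise. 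Combined with $\abs{\partial_z\leb} = O(\abs{\zeta}^{-1-c})$, this yields $\abs{\partial_w\leb}^2 = \abs{\partial_z\leb}^2 + O(\abs{\zeta}^{-4-c})$ and $\abs{\partial_z\leb - \partial_w\leb}^2 = 4\abs{\partial_z\leb}^2 + O(\abs{\zeta}^{-4-c})$; absorbing the $\abs{\zeta}^{-4-c}$ errors into $O(\abs{\zeta}^{-2-3c})$ (valid since $c\leq 1$) completes the two remaining equalities.

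The main obstacle is bookkeeping rather than any one hard step: two qualitatively distinct small scales, $\abs{\zeta}^{-c}$ and $\abs{\zeta}^{-1}$, interact through cross-terms, and the cancellation $\partial_z\leb + \partial_w\leb = O(\abs{\zeta}^{-3})$ emerges only after three separate rational functions have been expanded consistently to the same order. The range $c \in (2/3,1)$ enters exclusively through the inequality $-4-c \leq -2-3c$ (equivalently $c \leq 1$), which is what lets every auxiliary error generated along the way fit inside the advertised $O(\abs{\zeta}^{-2-3c})$ tolerance.
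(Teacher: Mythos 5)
Your proof is correct, and it takes a genuinely different route from the paper's. The paper establishes the three equalities in (\ref{E:Estimates}) by separate direct expansions: for each of $\abs{\partial_z\leb}^2$, $\abs{\partial_w\leb}^2$, and $\abs{\partial_z\leb-\partial_w\leb}^2$ it rotates to $\zeta\in\mathbb{R}_+$, writes $\eta=\zeta+\frac{1+\ep}{\zeta}e^{it}$, and Taylor-expands $\abs{(1+\ep)-e^{-it}}^2$, discarding auxiliary terms like $\frac{\cl\alpha\zeta}{1+\abs\zeta^2}-\frac{\cl\alpha}{\cl\eta}$ and $\frac{\zeta}{1+\zeta^2}-\frac1\zeta$ as $O(\abs\zeta^{-3})$ in each separate computation. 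Your argument instead isolates the structural reason behind the three-way coincidence: you add (\ref{E:Derivs I}) and (\ref{E:Derivs II}) once, observe that the $\cl\eta$ and $\cl\zeta$ terms cancel identically and that the three remaining rational fragments cancel pairwise to $O(\abs\zeta^{-3})$, yielding the near-antisymmetry $\partial_w\leb=-\partial_z\leb+O(\abs\zeta^{-3})$. That single identity, combined with the ancillary bound $\abs{\partial_z\leb}=O(\abs\zeta^{-1-c})$ you extract along the way, delivers the second and third equalities from the first in one stroke, with the $O(\abs\zeta^{-4-c})$ cross-term absorbed into $O(\abs\zeta^{-2-3c})$ by $c\le1$. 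The approaches are equally rigorous; yours trades three parallel expansions for one cancellation lemma and makes explicit why $\abs{\partial_z\leb-\partial_w\leb}^2$ equals $4\abs{\partial_z\leb}^2$ rather than anything else, whereas the paper's version keeps all three calculations self-contained and closer to the definitions.
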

\begin{proof}
Observe that $\frac{1}{\eta}-\frac{1}{\zeta}=O(\abs{\zeta}^{-3}).$ Hence, from (\ref{E:Derivs I}),
\[\abs{\partial_z\leb(z,w)}^2=\frac{1}{4}\abs{\cl{\eta}-\cl{\zeta}-\frac{1}{\zeta}+O(\abs{\zeta}^{-3})}^2.\]
This expression is invariant under rotation of $(\eta,\zeta).$ Hence, we may assume that $\zeta\in \mathbb R_+.$ The equation $\eta=\zeta+re^{it}$ then reads 
\begin{equation}
  \label{E:Eta Zeta Real}
\eta=\zeta+\frac{1+\ep}{\zeta}e^{it}.  
\end{equation}
Using that 
\begin{equation}
  \label{E:Final Ep}
\abs{(1+\ep)-e^{-it}}^2=\ep^2+2(1+\ep)(1-\cos t) =\ep^2+t^2+O(\zeta^{-3c})
\end{equation}
allows us to conclude the desired expression for $\abs{\partial_z\leb(z,w)}^2.$ To prove the same expression for $\abs{\partial_w\leb(z,w)}^2,$ we observe that
\begin{equation}
  \label{E: LebW Est}
\frac{\cl{\alpha}\zeta}{1+\abs{\zeta}^2}-\frac{\cl{\alpha}}{\cl{\eta}}=O(\abs{\zeta}^{-3}).  
\end{equation}
We then write
\[\abs{\partial_w\leb(z,w)}^2=\frac{1}{4}\abs{\frac{\zeta}{1+\abs{\zeta}^2}+\zeta-\eta+O(\abs{\zeta}^{-3})}^2.\]
As before, this expression is under rotation of $(\eta,\zeta)$ so we may take $\zeta\in \mathbb R_+.$ We then use that 
\[\frac{\zeta}{1+\zeta^2}-\frac{1}{\zeta}=O(\zeta^{-3})\]
and equations (\ref{E:Eta Zeta Real})-(\ref{E:Final Ep}) to conclude the desired expression for $\abs{\partial_w\leb(z,w)}^2.$ Finally, 
\[\abs{\partial_z\leb(z,w)-\partial_w\leb(z,w)}^2=\abs{2(\cl{\eta}-\cl{\zeta}-\frac{1}{\cl{\eta}})-\frac{\zeta}{1+\abs{\zeta}^2}+\frac{1}{\cl{\eta}}+O(\abs{\zeta}^{-3})}.\]
This expression is invariant under rotation of $(\eta,\zeta).$ Taking $\zeta\in \mathbb R_+,$ we see from (\ref{E:Eta Zeta Real}) that $\cl{\eta}-\cl{\zeta}-\frac{1}{\cl{\zeta}}=(1+\ep)e^{-it}-1.$ Observing $-\frac{\zeta}{1+\abs{\zeta}^2}+\frac{1}{\cl{\eta}}=O(\abs{\zeta}^{-3})$ and using (\ref{E:Final Ep}) concludes the proof.
\end{proof}

\noindent We define $D(r,t):=\frac{\ep^2+(t-\arg(\zeta))^2}{\abs{\zeta}^2}.$ Using Lemma \ref{L:Partials Estimates}, we may write the integrand in the formula (\ref{E:Counting Int III}) for $\E{\curly X_{A,N}}$ as
\begin{equation}
  \label{E:Counting Int II}
\frac{\frac{1}{2}(1+O(\abs{\zeta}^{-2}))r dr\wedge dt}{1-\twiddle{P}_{\infty}(w+re^{it},w)^2}\left(1-\frac{D(r,t)}{1-\twiddle{P}_{\infty}(w+re^{it},w)^2}\right)^2.  
\end{equation}
\noindent We now change variables in (\ref{E:Counting Int II}) from $(r,t)$ to $(x:=\abs{\zeta}\cdot \ep,\,\, y:=\abs{\zeta}\cdot (t-\arg(\zeta))).$ From the estimate (\ref{E:Normalized Szego Estimate}), we have that
\begin{equation}
  \label{E:First Term}
\frac{\frac{1}{2}(1+O(\abs{\zeta}^{-2}))rdr\wedge dt}{1-\twiddle{P}_{\infty}(z,w)^2}=\frac{\frac{1}{2}\left(1+O(\abs{\zeta}^{-1-c})\right)dx \wedge dy}{\frac{1}{2}+x^2+y^2}.
\end{equation}
Similarly, combining (\ref{E:Normalized Szego Estimate}) with (\ref{E:Estimates}), we find that 
\begin{equation}
  \label{E:Second Term}
\left(1-\frac{D(r,t)}{1-\twiddle{P}_{\infty}(w+re^{it},w)^2}\right)^2=\frac{1}{4}\frac{1}{(\frac{1}{2}+x^2+y^2)^2}.  
\end{equation}
Combining (\ref{E:Normalized Szego Estimate}) and (\ref{E:Estimates}), we write the inner integral in (\ref{E:Counting Int}) as
\[\frac{1}{8}\int_{-\abs{\zeta}^{1-c}}^{\abs{\zeta}^{1-c}}\int_{-\abs{\zeta}^{1-c}}^{\abs{\zeta}^{1-c}}\frac{(1+O(\abs{\zeta}^{-c}) +O(N^{-1/2+\ep}))dx\,dy}{(1/2+x^2+y^2)^3}\]
passing to polar coordinates we see that for $c<1,$ this integral equals $\frac{\pi}{4}+O(\abs{\zeta}^{2-3c})+O(N^{-1/2+\ep}).$ Substituting this into (\ref{E:Counting Int}) and recalling from (\ref{E:Local 1PF Crit}) of Theorem \ref{T:EZ} that $\E{C_A}=\int_A \frac{i}{2\pi}dw\wedge d\cl{w}+O\left(\int_A \abs{\zeta}^{-2}\right)+O(N^{-1/2+\ep})$ completes the proof. 
\bibliographystyle{plain}
\bibliography{Bibliography}

\end{document}